\newtheorem{thm}{Theorem}[section]
\newtheorem{pro}[thm]{Proposition}
\newtheorem{ex}[thm]{Example}
\newtheorem{rmk}[thm]{Remark}
\newtheorem{defi}[thm]{Definition}
\newcommand {\emptycomment}[1]{}
\newcommand{\lon }{\,\rightarrow\,}
\newcommand{\be }{\begin{equation}}
\newcommand{\ee }{\end{equation}}
\newcommand{\g}{\mathfrak g}
\newcommand{\h}{\mathfrak h}
\newcommand{\huaS}{\mathcal{S}}
\newcommand{\huaF}{\mathcal{F}}
\newcommand{\huaG}{\mathcal{G}}
\newcommand{\huaV}{\mathcal{V}}
\newcommand{\huaD}{\mathcal{D}}
\newcommand{\huaH}{\mathcal{H}}
\newcommand{\huaO}{{\mathcal{O}}}
\newcommand{\huaT}{\mathcal{T}}
\newcommand{\frkl}{\mathfrak l}
\newcommand{\frks}{\mathfrak s}
\newcommand{\frkC}{\mathfrak C}
\newcommand{\dM}{\mathrm{d}}
\newcommand{\RB}{\mathrm{RB}}
\newcommand{\Id}{{\rm{Id}}}
\newcommand{\br}[1]{   [ \cdot,    \cdot  ]   }
\newcommand{\CE}{\mathsf{CE}}
\newcommand{\Hom}{\mathrm{Hom}}
\newcommand{\gl}{\mathfrak {gl}}
\newcommand{\End}{\mathrm{End}}
\newcommand{\ad}{\mathrm{ad}}
\begin{document}

\title[Representations and cohomologies of relative Rota-Baxter Lie algebras]{Representations and cohomologies of relative Rota-Baxter Lie algebras and applications}

\author{Jun Jiang}
\address{Department of Mathematics, Jilin University, Changchun 130012, Jilin, China}
\email{jiangjmath@163.com}

\author{Yunhe Sheng}
\address{Department of Mathematics, Jilin University, Changchun 130012, Jilin, China}
\email{shengyh@jlu.edu.cn}

\vspace{-5mm}


\begin{abstract}
In this paper, first we give the notion of a representation of a relative Rota-Baxter Lie algebra and  introduce the cohomologies of a relative Rota-Baxter Lie algebra with coefficients in a representation. Then we classify abelian extensions of relative Rota-Baxter Lie algebras using the second cohomology group, and   classify   skeletal relative Rota-Baxter Lie 2-algebras using the third cohomology group as applications. At last, using the established general framework of representations and cohomologies of relative Rota-Baxter Lie algebras, we  give  the notion of representations of Rota-Baxter Lie algebras, which is  consistent with representations of Rota-Baxter associative algebras in the literature, and introduce the cohomologies of Rota-Baxter Lie algebras with coefficients in a representation.  Applications are also given to classify abelian extensions of Rota-Baxter Lie algebras and skeletal Rota-Baxter Lie 2-algebras.
\end{abstract}


\keywords{relative Rota-Baxter Lie algebra, representation, cohomology, derivation, extension, relative Rota-Baxter Lie 2-algebra}

\maketitle

\tableofcontents

\allowdisplaybreaks


\section{Introduction}

The purpose of this paper is to introduce the cohomology theory of a relative Rota-Baxter Lie algebra  with coefficients in a representation and give applications.

G. Baxter introduced the concept of Rota-Baxter operators on associative algebras
  in his study of
fluctuation theory in probability  \cite{Ba}. Recently it has found many
applications, including Connes-Kreimer's~\cite{CK} algebraic
approach to the renormalization in perturbative quantum field
theory. There are close connections between  Rota-Baxter operators and noncommutative
symmetric functions and Hopf algebras \cite{Fard,Gu0-1,Yu-Guo}.  Recently the relationship between Rota-Baxter operators and double Poisson algebras were studied in \cite{Goncharov}. For further details on
Rota-Baxter operators, see ~\cite{Gub-AMS,Gub}.
In the Lie algebra context, a Rota-Baxter operator was introduced independently in the 1980s as the
operator form of the classical Yang-Baxter equation that plays important roles in both mathematics and mathematical physics such as integrable
systems and quantum groups \cite{CP,STS}. B. A.
Kupershmidt introduced a more general notion, an  $\huaO$-operator on a Lie algebra (later also called
a relative Rota-Baxter operator or a generalized Rota-Baxter operator) in his study of the classical Yang-Baxter equation and
 related integrable systems \cite{Ku}. Relative Rota-Baxter operators provide solutions of the classical Yang-Baxter equation in the semidirect product Lie algebra and give rise to pre-Lie algebras \cite{Bai}.

 Recently the theories of deformations and homotopies of relative Rota-Baxter operators and relative Rota-Baxter algebras (both Lie and associative) are well developed in \cite{Das,DasM,LST,TBGS}. Among these studies, a cohomology theory was given to classify infinitesimal deformations and characterize the extendability of order $n$ deformations to order $n+1$ deformations. Let us briefly recall the approach taken in these studies. The first step is to construct the `controlling algebra' of the algebraic structure under consideration. It is well known that the controlling algebras of associative algebras and Lie algebras are graded Lie algebras given by the Gerstenhaber bracket and the Nijenhuis-Richardson bracket respectively \cite{Ge0,Ge,NR,NR2}. In \cite{LST}, using higher derived brackets \cite{Vo}, the authors constructed the controlling algebra of   relative Rota-Baxter Lie algebras, which turns out to be an $L_\infty$-algebra. More precisely, an $L_\infty$-algebra was given whose Maurer-Cartan elements are relative Rota-Baxter Lie algebra structures. Then using the controlling algebra, one can give a cohomology theory that can be applied to study infinitesimal  deformations and formal deformations.

 In general a cohomology theory is related to the representation theory. However, the cohomology theory developed in the above literatures was not based on representations of relative Rota-Baxter Lie algebras. Thus it is desirable to give the notion of representations of relative Rota-Baxter Lie algebras and develop  the corresponding cohomology theory that reduces to the cohomology given in \cite{LST} when the representation is the adjoint representation. This is the purpose of this paper. It is well known that a relative Rota-Baxter operator induces a pre-Lie algebra. We generalize this correspondence to the context of representations and cohomologies. We also give applications of the cohomologies of a relative Rota-Baxter Lie algebra with coefficients in a representation. More precisely, we use the second cohomology group to classify abelian extensions of relative Rota-Baxter Lie algebras, and use the third cohomology group to classify relative Rota-Baxter Lie 2-algebras, which was introduced in \cite{Sheng} under the terminology of $\huaO$-operators on Lie 2-algebras in the study of solutions of 2-graded Yang-Baxter equations. Note that in the associative algebra context, the notion of a representation (module) over a Rota-Baxter associative algebra  was given in \cite{GouLin}, and further studied in \cite{LQ,QGG,QP,ZGZ}. The representation of a relative Rota-Baxter Lie algebra introduced in this paper is consistent with the representation of Rota-Baxter associative algebras, namely a representation of a Rota-Baxter associative algebra naturally gives rise to a representation of the corresponding Rota-Baxter Lie algebra.

The paper is organized as follows. In Section \ref{sec:rc}, first we give the notion of a representation of a relative Rota-Baxter Lie algebra and show that a representation can be characterized by the semidirect product relative Rota-Baxter Lie algebra. We also construct the dual representation. We also show that a representation of a relative Rota-Baxter Lie algebra induces a representation of the underlying pre-Lie algebra.  In Section \ref{sec:cohomology}, we introduce the cohomology theory of a relative Rota-Baxter Lie algebra with coefficients in a representation. 1-cocycles can be used to characterize derivations on a relative Rota-Baxter Lie algebra. We establish the relation between the cohomology of a relative Rota-Baxter operator and the cohomology of the underlying pre-Lie algebra.  In Section \ref{sec:a},  we introduce the notion of abelian extensions of relative Rota-Baxter Lie algebras and show that there is a one-to-one correspondence between equivalent classes of abelian extensions and the second cohomology group. In Section \ref{sec:2},  first we recall the notion of relative Rota-Baxter Lie 2-algebras, and then we show that skeletal relative Rota-Baxter Lie 2-algebras are classified by the third cohomology group. Finally in Section \ref{sec:RB}, we introduce representations and cohomologies of Rota-Baxter Lie algebras using the above general framework of relative Rota-Baxter Lie algebras and give various applications.



\vspace{2mm}
\noindent
{\bf Acknowledgements. } This research is supported by NSFC (11922110).

\section{Representations  of relative Rota-Baxter Lie algebras}\label{sec:rc}

In this section, we introduce the notion of a representation of a relative Rota-Baxter Lie algebra. We give the semidirect products characterization of representations of relative Rota-Baxter Lie algebras. The dual representation is given.  A representation of a relative Rota-Baxter Lie algebra also induces a representation of the underlying pre-Lie algebra.

\begin{defi}
\begin{enumerate}
\item[\rm(i)] Let $(\g,[\cdot,\cdot]_\g)$ be a Lie algebra.
A linear operator $T:\g\longrightarrow \g$ is called a {\bf Rota-Baxter operator } if
\begin{equation*}
 [T(x),T(y)]_\g=T\big([T(x),y]_\g+ [x,T(y)]_\g \big), \quad \forall x, y \in \g.
\end{equation*}
Moreover, a Lie algebra $(\g,[\cdot,\cdot]_\g)$ with a Rota-Baxter operator $T$ is
called a {\bf Rota-Baxter Lie algebra}. We denote it by $(\g,[\cdot,\cdot]_\g,T)$.
\item[\rm(ii)]
A {\bf relative Rota-Baxter Lie algebra} is a triple $((\g,[\cdot,\cdot]_{\g}),(V,\rho),T)$, where $(\g,[\cdot,\cdot]_{\g})$ is a Lie algebra, $\rho:\g\rightarrow \gl(V)$ is a representation of $\g$ on a vector space $V$ and $T:V\rightarrow\g$ is a {\bf relative Rota-Baxter operator}, i.e.
\begin{equation*}
[Tu,Tv]_{\g}=T(\rho(Tu)(v)-\rho(Tv)(u)),\quad \forall u,v\in V.
\end{equation*}
\end{enumerate}
\end{defi}
Note that a Rota-Baxter operator on a Lie algebra is a relative Rota-Baxter operator with respect to the adjoint representation.
\begin{defi}
Let $((\g,[\cdot,\cdot]_{\g}),(V,\rho),T)$ and $((\g',[\cdot,\cdot]_{\g'}),(V',\rho'),T')$ be two relative Rota-Baxter Lie algebras. A homomorphism from $((\g,[\cdot,\cdot]_{\g}),(V,\rho),T)$ to $((\g',[\cdot,\cdot]_{\g'}),(V',\rho'),T')$ consists of a Lie algebra homomorphism $\phi:\g\rightarrow\g'$ and a linear map $\varphi: V\rightarrow V'$ such that
\begin{eqnarray*}
T'\circ\varphi&=&\phi\circ T,\\
\varphi\rho(x)(u)&=&\rho'(\phi(x))(\varphi(u)),\quad \forall x\in\g, u\in V.
\end{eqnarray*}
\end{defi}

It is well known that a classical skew-symmetric $r$-matrix $r\in\wedge^2\g$ gives rise to a relative Rota-Baxter operator $r^\sharp:\g^*\lon\g$ on $\g$ with respect to the coadjoint representation. We give another interesting example.

\begin{ex}{\rm
Let $W\stackrel{\huaT}\rightarrow\h$ be a complex of vector spaces. We define
\begin{equation*}
\End(W\stackrel{\huaT}\rightarrow\h)=\{(A_{0},A_{1})|A_{0}\in\gl(\h),A_{1}\in\gl(W), A_{0}\circ\huaT=\huaT\circ A_{1}\}.
\end{equation*}
Then
$((\End(W\stackrel{\huaT}\rightarrow\h), [\cdot,\cdot]), (\Hom(\h,\ker\huaT),\varsigma),\Omega)$
 is a relative Rota-Baxter Lie algebra, where the Lie bracket $[\cdot,\cdot]$ on  $\End(W\stackrel{\huaT}\rightarrow\h)$ is given by
\begin{equation*}
[(A_{0},A_{1}),(B_{0},B_{1})]=([A_{0},B_{0}],[A_{1},B_{1}]), \quad \forall (A_{0},A_{1}),(B_{0},B_{1})\in \End(W\stackrel{\huaT}\rightarrow\h),
\end{equation*}
the representation $\varsigma$ of the Lie algebra $(\End(W\stackrel{\huaT}\rightarrow\h), [\cdot,\cdot])$ on $\Hom(\h,\ker\huaT)$ is given by
\begin{eqnarray*}
\varsigma(A_{0},A_{1})(\Phi)&=&A_{1}\circ\Phi-\Phi\circ A_{0}, \quad\forall \Phi\in\Hom(\h,\ker\huaT),
\end{eqnarray*}
and the relative Rota-Baxter operator $\Omega:\Hom(\h,\ker\huaT)\lon \End(W\stackrel{\huaT}\rightarrow\h)$ is given by
\begin{eqnarray*}
\Omega(\Phi)&=&(0, \Phi\circ\huaT).
\end{eqnarray*}
}
\end{ex}

Now we introduce the notion of a representation of a relative Rota-Baxter Lie algebra.
\begin{defi}
A {\bf representation of a relative Rota-Baxter Lie algebra} $((\g,[\cdot,\cdot]_{\g}),(V,\rho), T)$ on a 2-term complex of vector spaces $W\stackrel{\huaT}\rightarrow \h$ is a triple $(\rho_{\h},\rho_{W},\mu)$, where $\rho_{\h}:\g\rightarrow\gl(\h)$ and $\rho_{W}:\g\rightarrow\gl(W)$ are representations of the Lie algebra $\g$ on vector spaces $\h$ and $W$ respectively, and $\mu: V\rightarrow\Hom(\h,W)$ is a linear map such that
the following equations are satisfied:
\begin{eqnarray}
\label{rep3}\mu(\rho(x)u)&=&\rho_{W}(x)\mu(u)-\mu(u)\rho_{\h}(x),\\
\label{rep4}\rho_{\h}(T(u))\circ \huaT&=&\huaT\circ\rho_{W}(T(u))+\huaT\circ\mu(u)\circ\huaT,
\end{eqnarray}
for all $x\in\g, u\in V.$
\end{defi}

We denote a representation by $[W\stackrel{\huaT}\rightarrow\h,\rho_{\h},\rho_{W},\mu]$.

\begin{ex}\label{ex:adjrep}{\rm
Let $((\g,[\cdot,\cdot]_{\g}),(V,\rho),T)$  be a relative Rota-Baxter Lie algebra. Define $\bar{\rho}:V\lon\Hom(\g,V)$ by
$$
\bar{\rho}(u)(x)=-\rho(x)u.
$$
Then it is straightforward to see that $[V\stackrel{T}\rightarrow\g,\ad,\rho,\bar{\rho}]$ is a representation, which is called the {\bf adjoint representation of the relative Rota-Baxter Lie algebra} $((\g,[\cdot,\cdot]_{\g}),(V,\rho), T)$.
}
\end{ex}


\begin{pro}\label{semiRB}
Let $[W\stackrel{\huaT}\rightarrow\h,\rho_{\h},\rho_{W},\mu]$ be a representation of a relative Rota-Baxter Lie algebra $((\g,[\cdot,\cdot]_{\g}),(V,\rho),T)$. Then $((\g\oplus\h, [\cdot,\cdot]_{\ltimes}),(V\oplus W, \varrho),\frak{T})$ is a relative Rota-Baxter Lie algebra, where $[\cdot,\cdot]_{\ltimes}$ is the semidirect product Lie bracket given by
\begin{equation*}
[x+\alpha,y+\beta]_{\ltimes}=[x,y]_\g+\rho_{\h}(x)\beta-\rho_{\h}(y)\alpha,\quad \forall x, y\in\g, \alpha,\beta\in\h,
\end{equation*}
the representation $\varrho: \g\oplus\h\rightarrow\gl(V\oplus W)$ is given by
\begin{eqnarray*}
\varrho(x+\alpha)(u+\xi)&=&\rho(x)u+\rho_{W}(x)\xi-\mu(u)\alpha,\quad \forall x\in\g,\alpha \in\h, u\in V, \xi\in W,
\end{eqnarray*}
and the relative Rota-Baxter operator $\frak{T}: V\oplus W\rightarrow\g\oplus\h$ is given by
\begin{eqnarray*}
\frak{T}(u+\xi)&=&T(u)+\huaT(\xi).
\end{eqnarray*}

\end{pro}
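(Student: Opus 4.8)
The plan is to verify, in turn, the three defining conditions of a relative Rota-Baxter Lie algebra for the triple $((\g\oplus\h,[\cdot,\cdot]_{\ltimes}),(V\oplus W,\varrho),\frak{T})$: that $[\cdot,\cdot]_{\ltimes}$ is a Lie bracket, that $\varrho$ is a representation of it, and that $\frak{T}$ satisfies the relative Rota-Baxter identity with respect to $\varrho$. The first condition is not new: $[\cdot,\cdot]_{\ltimes}$ is precisely the semidirect product bracket attached to the Lie algebra $\g$ and its representation $(\h,\rho_{\h})$, so its skew-symmetry and Jacobi identity are standard and use none of the hypotheses on $\mu$, $\rho_{W}$ or $\huaT$.

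For the second condition I would check $\varrho([x+\alpha,y+\beta]_{\ltimes})=[\varrho(x+\alpha),\varrho(y+\beta)]$ by evaluating both sides on a general $u+\xi\in V\oplus W$ and comparing the $V$-component and the $W$-component separately. The $V$-component reduces to the statement that $\rho$ is a representation of $\g$, and the $\rho_{W}$-terms in the $W$-component reduce to the statement that $\rho_{W}$ is a representation of $\g$. The remaining $W$-terms, those involving $\mu$ paired with $\alpha$ or $\beta$, are exactly where \eqref{rep3} enters: substituting $\mu(\rho(x)u)=\rho_{W}(x)\mu(u)-\mu(u)\rho_{\h}(x)$ collapses the $\beta$-grouped terms to $-\mu(u)\rho_{\h}(x)\beta$ and the $\alpha$-grouped terms to $\mu(u)\rho_{\h}(y)\alpha$, which is precisely the contribution $-\mu(u)(\rho_{\h}(x)\beta-\rho_{\h}(y)\alpha)$ produced on the left. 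Thus \eqref{rep3} is the entire content of the representation axiom for $\varrho$.

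The third condition is the crux. I would expand the relative Rota-Baxter identity $[\frak{T}(u+\xi),\frak{T}(v+\eta)]_{\ltimes}=\frak{T}\big(\varrho(\frak{T}(u+\xi))(v+\eta)-\varrho(\frak{T}(v+\eta))(u+\xi)\big)$ using $\frak{T}(u+\xi)=T(u)+\huaT(\xi)$, and then compare the $\g$- and $\h$-components. Since $\frak{T}$ lands in $\g\oplus\h$ via $T$ on the $V$-part and $\huaT$ on the $W$-part, the $\g$-component of the right-hand side is $T(\rho(Tu)v-\rho(Tv)u)$, which equals $[Tu,Tv]_{\g}$ exactly because $T$ is a relative Rota-Baxter operator; this matches the $\g$-component of the left-hand side. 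The $\h$-component of the left-hand side is $\rho_{\h}(Tu)\huaT(\eta)-\rho_{\h}(Tv)\huaT(\xi)$, while after applying $\huaT$ the $\h$-component of the right-hand side is $\huaT\rho_{W}(Tu)\eta+\huaT\mu(u)\huaT(\eta)-\huaT\rho_{W}(Tv)\xi-\huaT\mu(v)\huaT(\xi)$. The identity \eqref{rep4}, in the form $\rho_{\h}(Tu)\circ\huaT=\huaT\circ\rho_{W}(Tu)+\huaT\circ\mu(u)\circ\huaT$, rewrites each term $\rho_{\h}(T\cdot)\huaT(\cdot)$ on the left exactly into the corresponding pair of terms on the right, so the two $\h$-components coincide.

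The main obstacle is organizational rather than conceptual: the verification lives in the decomposition $\g\oplus\h$ on the target and $V\oplus W$ on the source, and the real danger is losing track of which summand each term belongs to. Once the computation is split cleanly into components, each piece is forced by exactly one hypothesis, namely the Lie and representation axioms for $\rho$ and $\rho_{W}$, the compatibility \eqref{rep3}, the operator compatibility \eqref{rep4}, and the relative Rota-Baxter identity for $T$, so no genuinely new construction or estimate is required.
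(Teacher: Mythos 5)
Your proposal is correct and follows essentially the same route as the paper's proof: the Lie algebra structure is the standard semidirect product, the representation axiom for $\varrho$ is verified by direct expansion with \eqref{rep3} absorbing the $\mu$-terms, and the relative Rota-Baxter identity splits into a $\g$-component handled by the Rota-Baxter property of $T$ and an $\h$-component handled by \eqref{rep4}. No discrepancies.
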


This relative Rota Baxter Lie algebra is called the {\bf semidirect product} of $((\g,[\cdot,\cdot]_{\g}),(V,\rho),T)$ and the representation $[W\stackrel{\huaT}\rightarrow\h,\rho_{\h},\rho_{W},\mu]$.
\begin{proof}
Since $\rho_\h$ is a representation of $\g$ on $\h$, it is obvious that  $(\g\oplus\h, [\cdot,\cdot]_{\ltimes})$ is a Lie algebra.

 By (\ref{rep3}), we have
\begin{eqnarray*}
[\varrho(x+\alpha),\varrho(y+\beta)](u+\xi)&=&\varrho(x+\alpha)\varrho(y+\beta)(u+\xi)-\varrho(y+\beta)\varrho(x+\alpha)(u+\xi)\\
&=&\varrho(x+\alpha)(\rho(y)u+\rho_{W}(y)\xi-\mu(u)\beta)\\&&-\varrho(y+\beta)(\rho(x)u
+\rho_{W}(x)\xi-\mu(u)\alpha)\\
&=&\rho(x)\rho(y)u+\rho_{W}(x)\rho_{W}(y)\xi-\rho_{W}(x)\mu(u)\beta-\mu(\rho(y)u)\alpha\\
&&-\rho(y)\rho(x)u-\rho_{W}(y)\rho_{W}(x)\xi+\rho_{W}(y)\mu(u)\alpha+\mu(\rho(x)u)\beta\\
&=&[\rho(x),\rho(y)]u+[\rho_{W}(x),\rho_{W}(y)]\xi\\
&&+\rho_{W}(y)\mu(u)\alpha-\mu(\rho(y)u)\alpha+\mu(\rho(x)u)\beta-\rho_{W}(x)\mu(u)\beta\\
&=&[\rho(x),\rho(y)]u+[\rho_{W}(x),\rho_{W}(y)]\xi+\mu(u)\rho_{\h}(y)\alpha-\mu(u)\rho_{\h}(x)\beta.
\end{eqnarray*}
Moreover,
\begin{eqnarray*}
\varrho([x+\alpha,y+\beta]_\ltimes)(u+\xi)&=&\varrho([x,y]_\g+\rho_{\h}(x)\beta-\rho_{\h}(y)\alpha)(u+\xi)\\
&=&\rho([x,y]_\g)u+\rho_{W}([x,y]_\g)\xi+\mu(u)\rho_{\h}(y)\alpha-\mu(u)\rho_{\h}(x)\beta.
\end{eqnarray*}
Since $\rho$ and $\rho_{W}$ are representations of $\g$ on $V$ and $W$ respectively, we have
\begin{equation*}
\varrho([x+\alpha,y+\beta]_\ltimes)(u+\xi)=[\varrho(x+\alpha),\varrho(y+\beta)](u+\xi),
\end{equation*}
which implies that $\varrho$ is a representation of the Lie algebra $(\g\oplus\h, [\cdot,\cdot]_\ltimes)$ on $V\oplus W$.

By (\ref{rep4}), we have
\begin{eqnarray*}
&&\frak{T}(\varrho(\frak{T}(u_{1}+\xi_{1}))(u_{2}+\xi_{2}))-\frak{T}(\varrho(\frak{T}(u_{2}+\xi_{2}))(u_{1}+\xi_{1}))\\
&=&(T+\huaT)(\varrho(T(u_{1})+\huaT(\xi_{1}))(u_{2}+\xi_{2}))-(T+\huaT)(\varrho(T(u_{2})+\huaT(\xi_{2}))(u_{1}+\xi_{1}))\\
&=&(T+\huaT)(\rho(T(u_{1}))u_{2}+\rho_{W}(T(u_{1}))\xi_{2}-\mu(u_{2})\huaT(\xi_{1}))\\
&&-(T+\huaT)(\rho(T(u_{2}))u_{1}+\rho_{W}(T(u_{2}))\xi_{1}-\mu(u_{1})\huaT(\xi_{2}))\\
&=&T(\rho(T(u_{1}))u_{2})+\huaT(\rho_{W}(T(u_{1}))\xi_{2}-\mu(u_{2})\huaT(\xi_{1}))\\
&&-T(\rho(T(u_{2}))u_{1})-\huaT(\rho_{W}(T(u_{2}))\xi_{1}-\mu(u_{1})\huaT(\xi_{2}))\\
&=&T(\rho(T(u_{1}))u_{2})-T(\rho(T(u_{2}))u_{1})+\huaT(\rho_{W}(T(u_{1}))\xi_{2})+\huaT(\mu(u_{1})\huaT(\xi_{2}))\\
&&-\huaT(\mu(u_{2})\huaT(\xi_{1}))-\huaT(\rho_{W}(T(u_{2}))\xi_{1})\\
&=&T(\rho(T(u_{1}))u_{2})-T(\rho(T(u_{2}))u_{1})+\rho_{\h}(T(u_{1}))\huaT(\xi_{2})-\rho_{\h}(T(u_{2}))\huaT(\xi_{1}).
\end{eqnarray*}
Moreover,
\begin{eqnarray*}
[\frak{T}(u_{1}+\xi_{1}),\frak{T}(u_{2}+\xi_{2})]_\ltimes&=&[T(u_{1})+\huaT(\xi_{1}),T(u_{2})+\huaT(\xi_{2})]_\ltimes\\
&=&[T(u_{1}),T(u_{2})]_\g+\rho_{\h}(T(u_{1}))\huaT(\xi_{2})-\rho_{\h}(T(u_{2}))\huaT(\xi_{1}).
\end{eqnarray*}
Since $T$ is a relative Rota-Baxter operator, we have
\begin{equation*}
[\frak{T}(u_{1}+\xi_{1}),\frak{T}(u_{2}+\xi_{2})]_\ltimes=\frak{T}(\varrho(\frak{T}(u_{1}+\xi_{1}))(u_{2}+\xi_{2}))-\frak{T}(\varrho(\frak{T}(u_{2}+\xi_{2}))(u_{1}+\xi_{1})),
\end{equation*}
which implies that $\frak{T}$ is a relative Rota-Baxter operator on the Lie algebra $(\g\oplus\h, [\cdot,\cdot]_\ltimes)$ with respect the representation $(V\oplus W, \varrho)$.
 Thus $((\g\oplus\h, [\cdot,\cdot]_{\ltimes}),(V\oplus W, \varrho),\frak{T})$ is a relative Rota-Baxter Lie algebra.
\end{proof}

Next we consider the {\bf dual representation} of a relative Rota-Baxter Lie algebra. Let   $[W\stackrel{\huaT}\rightarrow\h,\rho_{\h},\rho_{W},\mu]$ be a representation of a relative Rota-Baxter Lie algebra $((\g,[\cdot,\cdot]_{\g}),(V,\rho),T)$. Define  $\rho_\h^*$ and $\rho_W^*$ by
$$
\langle\rho^*_\h(x)\zeta, \alpha\rangle=-\langle \zeta, \rho_\h(x)\alpha\rangle, \quad \langle\rho^*_W(x)\epsilon, \xi\rangle=-\langle \epsilon, \rho_W(x)\xi\rangle,
$$
for all $x\in\g, \alpha\in\h, \xi\in W, \zeta\in\h^*, \epsilon\in W^*$. Then it is well known that  $\rho_\h^*$ and $\rho_W^*$ are representations of $\g$ on $\h^*$ and $W^*$ respectively.  We consider the complex $\h^*\stackrel{-\huaT^*}\rightarrow W^*$, where $\huaT^*$ is the usual dual of $\huaT$, i.e. $\langle\huaT^*(\zeta),\xi\rangle=\langle\zeta,\huaT(\xi) \rangle$. Define $\mu^*: V\rightarrow\Hom(W^*,\h^*)$ by
\begin{equation*}
 \langle \mu^*(u)  \epsilon, \alpha \rangle=\langle \epsilon, -\mu(u) \alpha \rangle,\quad \forall u\in V,\epsilon\in W^*, \alpha \in\h.
\end{equation*}

\begin{pro}\label{pro:dualrep}
Let $[W\stackrel{\huaT}\rightarrow\h,\rho_{\h},\rho_{W},\mu]$ be a representation of a relative Rota-Baxter Lie algebra $((\g,[\cdot,\cdot]_{\g}),(V,\rho),T)$. Then $[\h^*\stackrel{-\huaT^*}\rightarrow W^*, \rho_{W}^*, \rho_{\h}^*, \mu^*]$ is a representation of  $((\g,[\cdot,\cdot]_{\g}),(V,\rho),T)$, called the {\bf dual representation}.
\end{pro}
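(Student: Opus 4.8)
The plan is to verify the four conditions in the definition of a representation for the proposed dual data on the complex $\h^*\stackrel{-\huaT^*}\rightarrow W^*$. Two of them are already in hand: it is recalled in the paragraph preceding the statement that $\rho_\h^*$ and $\rho_W^*$ are representations of $\g$ on $\h^*$ and $W^*$ respectively, so the two ``Lie algebra representation'' requirements hold automatically. It then remains to check the two compatibility identities \eqref{rep3} and \eqref{rep4}, now transplanted to the dual quadruple. The key preliminary is to fix which space plays which role: in the dual, $W^*$ is the target (the $\h$-slot, carrying $\rho_W^*$), $\h^*$ is the source (the $W$-slot, carrying $\rho_\h^*$), and $\mu^*(u)\colon W^*\to\h^*$. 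With these assignments and $\widetilde{\huaT}=-\huaT^*$, the two identities to be established read
\begin{equation*}
\mu^*(\rho(x)u)=\rho_\h^*(x)\mu^*(u)-\mu^*(u)\rho_W^*(x)
\end{equation*}
and
\begin{equation*}
\rho_W^*(Tu)\circ\widetilde{\huaT}=\widetilde{\huaT}\circ\rho_\h^*(Tu)+\widetilde{\huaT}\circ\mu^*(u)\circ\widetilde{\huaT}.
\end{equation*}

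For the first identity I would argue by pairing: evaluate both sides on an arbitrary $\epsilon\in W^*$, pair the resulting element of $\h^*$ against an arbitrary $\alpha\in\h$, and push everything through the defining formulas for $\mu^*$, $\rho_\h^*$, $\rho_W^*$. Each term is thereby rewritten as a pairing of $\epsilon$ against an expression built from $\mu$, $\rho_\h$, $\rho_W$ applied to $\alpha$, and the original identity \eqref{rep3} applied to $u$ closes the computation. Equivalently, this is just the transpose of \eqref{rep3} combined with the sign rules $(\rho_\h(x))^*=-\rho_\h^*(x)$, $(\rho_W(x))^*=-\rho_W^*(x)$, and $\mu(u)^*=-\mu^*(u)$, all of which follow immediately from the definitions of the dual structures.

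For the second identity the cleanest route is to transpose \eqref{rep4} directly, viewing both sides as maps $W\to\h$ and dualizing (recall composition order reverses). Using the three sign rules above, the transpose of \eqref{rep4} reads $-\huaT^*\rho_\h^*(Tu)=-\rho_W^*(Tu)\huaT^*-\huaT^*\mu^*(u)\huaT^*$; multiplying by $-1$ gives $\huaT^*\rho_\h^*(Tu)=\rho_W^*(Tu)\huaT^*+\huaT^*\mu^*(u)\huaT^*$, and then substituting $\huaT^*=-\widetilde{\huaT}$ and performing a final overall sign cancellation produces exactly the displayed equation.

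I expect the only real obstacle to be sign bookkeeping: one must simultaneously track the minus sign built into $\widetilde{\huaT}=-\huaT^*$, the minus signs in the definitions of $\rho_\h^*,\rho_W^*,\mu^*$, and the reversal of composition order under transposition. The single conceptual place where an error would silently break everything is the role assignment — that $\rho_W^*$ must sit in the $\h$-slot and $\rho_\h^*$ in the $W$-slot of the dual quadruple. Once the roles are fixed and all transposes are taken consistently, both verifications are mechanical, and the three sign reversals conspire to cancel and yield the stated identities.
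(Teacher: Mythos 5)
Your proposal is correct and follows essentially the same route as the paper: the Lie-algebra representation conditions for $\rho_\h^*$ and $\rho_W^*$ are taken as known, and the two remaining compatibility identities \eqref{dual 4} and \eqref{dual 5} are verified by dualizing \eqref{rep3} and \eqref{rep4} (the paper writes this out as explicit pairing computations, which is just the element-level form of your transpose argument, and your sign bookkeeping checks out). Your identification of the role swap — $\rho_W^*$ in the $\h$-slot, $\rho_\h^*$ in the $W$-slot — matches the paper exactly.
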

\begin{proof} Since  $\rho_\h^*$ and $\rho_W^*$ are representations of $\g$ on $\h^*$ and $W^*$, we only need to prove the following two equalities:
\begin{eqnarray}\label{dual 4}
\mu^*(\rho(x)u)&=&\rho_{\h}^*(x)\mu^*(u)-\mu^*(u)\rho_{W}^*(x),\quad \forall x\in\g, u\in V,\\
\label{dual 5}
\rho_{W}^*(T(u))\circ(-\huaT^*)&=&(-\huaT^*)\circ\rho_{\h}^*(T(u))+(-\huaT^*)\circ\mu^*(u)\circ(-\huaT^*).
\end{eqnarray}

By \eqref{rep3}, for all $\epsilon\in W^*, \alpha\in\h,$ we have
\begin{eqnarray*}
\langle\mu^*(\rho(x)u)\epsilon, ~\alpha\rangle&=&-\langle\epsilon, ~\mu(\rho(x)u)\alpha\rangle\\
&=&-\langle\epsilon, ~\rho_{W}(x)\mu(u)\alpha-\mu(u)\rho_{\h}(x)\alpha\rangle\\
&=&\langle\rho_{W}^*(x)\epsilon, ~\mu(u)\alpha\rangle-\langle\mu^*(u)\epsilon, ~\rho_{\h}(x)\alpha\rangle\\
&=&-\langle\mu^*(u)\rho_{W}^*(x)\epsilon, ~\alpha\rangle+\langle\rho_{\h}^*(x)\mu^*(u)\epsilon, ~\alpha\rangle\\
&=&\langle\rho_{\h}^*(x)\mu^*(u)\epsilon-\mu^*(u)\rho_{W}^*(x)\epsilon, ~\alpha\rangle,
\end{eqnarray*}
which implies that \eqref{dual 4} holds.

By \eqref{rep4}, for all $\xi\in W, \zeta\in \h^*,$ we have
\begin{eqnarray*}
\langle\rho_{W}^*(T(u))(-\huaT^*)(\zeta),~\xi\rangle&=&\langle\huaT^*(\zeta), ~\rho_W(T(u))\xi\rangle\\
&=&\langle\zeta, ~\huaT(\rho_W(T(u))\xi)\rangle\\
&=&\langle\zeta, ~\rho_{\h}(T(u))\huaT(\xi)-\huaT(\mu(u)\huaT(\xi))\rangle\\
&=&-\langle\rho_{\h}^*(T(u))\zeta, ~\huaT(\xi)\rangle-\langle\huaT^*(\zeta), ~\mu(u)(\huaT(\xi))\rangle\\
&=&\langle(-\huaT^*)(\rho_{\h}^*(T(u))(\zeta)), ~\xi\rangle+\langle(-\huaT^*)(\mu^*(u)(-\huaT^*)(\zeta)), ~\xi\rangle,
\end{eqnarray*}
which implies that \eqref{dual 5} holds.

Therefore, $[\h^*\stackrel{-\huaT^*}\rightarrow W^*, \rho_{W}^*, \rho_{\h}^*, \mu^*]$ is a representation of  $((\g,[\cdot,\cdot]_{\g}),(V,\rho),T)$.
\end{proof}

\begin{ex}{\rm
Let $((\g,[\cdot,\cdot]_{\g}),(V,\rho),T)$  be a relative Rota-Baxter Lie algebra.
Then by Example \ref{ex:adjrep} and Proposition \ref{pro:dualrep},  $[\g^*\stackrel{-T^*}\rightarrow V^*,\rho^*,\ad^*,\bar{\rho}^*]$ is a representation, which is called the {\bf coadjoint representation of the relative Rota-Baxter Lie algebra} $((\g,[\cdot,\cdot]_{\g}),(V,\rho), T)$.
}
\end{ex}


At the end of this section, we establish the relationship between representations of relative Rota-Baxter Lie algebras and representations of the underlying pre-Lie algebras. Recall that a  {\bf pre-Lie algebra} is a pair $(A,\cdot_A)$, where $A$ is a vector space and  $\cdot_A:A\otimes A\longrightarrow A$ is a bilinear multiplication
satisfying that for all $x,y,z\in A$,
$$(x\cdot_A y)\cdot_A z-x\cdot_A(y\cdot_A z)=(y\cdot_A x)\cdot_A
z-y\cdot_A(x\cdot_A z).$$
A pre-Lie algebra $(A,\cdot_A)$ gives rise to a Lie algebra  $(A,[\cdot,\cdot]_C)$, where the Lie bracket $[\cdot,\cdot]_C$ is the commutator of the pre-Lie multiplication. A {\bf  representation} of a pre-Lie algebra $(A,\cdot_A)$ on a vector
space $W$ consists of a pair
$(\theta,\vartheta)$, where $\theta:A\longrightarrow \gl(W)$ is a representation
of the Lie algebra $(A,[\cdot,\cdot]_C)$ on $W $ and $\vartheta:A\longrightarrow \gl(W)$ is a linear
map satisfying
\begin{eqnarray}\label{representation condition 2}
 \theta(x)\vartheta(y)u-\vartheta(y)\theta(x)u=\vartheta(x\cdot_A y)u-\vartheta(y)\vartheta(x)u, \quad \forall x,y\in A,~ u\in W.
\end{eqnarray}
It is well known that a relative Rota-Baxter operator $T:V\lon\g$ induces a pre-Lie algebra structure $\cdot_T$ on $V$ by $$u\cdot_T v=\rho(Tu)v.$$ See \cite{Bai} for more details. Now we generalize this correspondence to representations.

\begin{pro}
  Let $[W\stackrel{\huaT}\rightarrow\h,\rho_{\h},\rho_{W},\mu]$ be a representation of a relative Rota-Baxter Lie algebra $((\g,[\cdot,\cdot]_{\g}),(V,\rho),T)$. Then $(W; \theta, \vartheta)$ is a representation of the pre-Lie algebra $(V,\cdot_T),$ where $$\theta=\rho_{W}\circ T,\quad\vartheta(u)\xi=-\mu(u)\huaT(\xi),\quad \forall u\in V,\xi\in W.$$
\end{pro}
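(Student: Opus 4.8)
The plan is to verify directly the two defining axioms of a representation of the pre-Lie algebra $(V,\cdot_T)$ as spelled out before \eqref{representation condition 2}: first that $\theta=\rho_W\circ T$ is a representation of the sub-adjacent Lie algebra $(V,[\cdot,\cdot]_C)$ on $W$, and second that the pair $(\theta,\vartheta)$ satisfies the compatibility identity \eqref{representation condition 2}. Everything is a direct computation from the definitions, so there is no structural guesswork; the work is entirely in applying the two representation axioms \eqref{rep3} and \eqref{rep4} together with the relative Rota-Baxter condition.

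For the first axiom, I would recall that the Lie bracket on $V$ induced by $\cdot_T$ is the commutator $[u,v]_C=u\cdot_T v-v\cdot_T u=\rho(Tu)v-\rho(Tv)u$. Applying $T$ and using that $T$ is a relative Rota-Baxter operator gives $T[u,v]_C=[Tu,Tv]_\g$. Hence $\theta([u,v]_C)=\rho_W([Tu,Tv]_\g)$, and since $\rho_W$ is a representation of the Lie algebra $\g$ on $W$ this equals $[\rho_W(Tu),\rho_W(Tv)]=[\theta(u),\theta(v)]$. Thus $\theta$ is a Lie algebra representation of $(V,[\cdot,\cdot]_C)$ with essentially no computation beyond these two substitutions.

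The heart of the proof is the compatibility condition \eqref{representation condition 2}, which I would check by expanding both sides using $\theta(u)=\rho_W(Tu)$ and $\vartheta(u)\xi=-\mu(u)\huaT(\xi)$. The left-hand side reduces to $-\rho_W(Tu)\mu(v)\huaT(\xi)+\mu(v)\huaT(\rho_W(Tu)\xi)$. On the right-hand side, the term $\vartheta(u\cdot_T v)\xi=-\mu(\rho(Tu)v)\huaT(\xi)$ is rewritten via the $\mu$-equivariance axiom \eqref{rep3} taken at $x=Tu$, which produces a $\rho_W(Tu)\mu(v)$ contribution and a $\mu(v)\rho_\h(Tu)$ contribution; the latter is then transformed by the operator-compatibility axiom \eqref{rep4}, expressing $\rho_\h(Tu)\circ\huaT$ in terms of $\huaT\circ\rho_W(Tu)$ and $\huaT\circ\mu(u)\circ\huaT$. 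This yields a spurious term $\mu(v)\huaT(\mu(u)\huaT(\xi))$ which cancels exactly against the remaining right-hand contribution $-\vartheta(v)\vartheta(u)\xi=-\mu(v)\huaT(\mu(u)\huaT(\xi))$, leaving precisely the left-hand side.

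The main obstacle is purely the bookkeeping in this last step: one must invoke \eqref{rep3} and \eqref{rep4} in the correct order and track the signs coming from the definition $\vartheta(u)\xi=-\mu(u)\huaT(\xi)$, so that the two copies of $\mu(v)\huaT(\mu(u)\huaT(\xi))$ cancel and the surviving terms match term by term. No further input is required, and each of the representation axioms \eqref{rep3} and \eqref{rep4} is used exactly once.
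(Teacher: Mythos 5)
Your proposal is correct and follows essentially the same route as the paper's proof: the first axiom via $T[u,v]_C=[Tu,Tv]_\g$ and the fact that $\rho_W$ is a Lie algebra representation, and the compatibility identity by a single application each of \eqref{rep3} and \eqref{rep4} with the two $\mu(v)\huaT\mu(u)\huaT\xi$ terms cancelling. The only cosmetic difference is that the paper transforms the left-hand side into the right-hand side in one chain of equalities, whereas you expand both sides and match them; the content is identical.
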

\begin{proof}
Since $T$ is relative Rota-Baxter operator, for any $u,v\in V$, we have
\begin{equation}\label{prerep1}
\theta([u,v]_C)=\rho_{W}(T(\rho(T(u))v-\rho(T(v)u)))=\rho_{W}([T(u),T(v)]_\g)=[\theta(u),\theta(v)],
\end{equation}
which implies that $\theta$ is a representation of Lie algebra $(V,[\cdot,\cdot]_C)$ on $W$.

By \eqref{rep3} and \eqref{rep4}, for any $\xi\in W$, we have
\begin{eqnarray*}
 \nonumber\theta(u)\vartheta(v)\xi-\vartheta(v)\theta(u)\xi
&=&\nonumber-\theta(u)\mu(v)\huaT(\xi)-\vartheta(v)\rho_W(T(u))\xi\\
&=&\nonumber-\rho_{W}(T(u))\mu(v)\huaT(\xi)+\mu(v)\huaT\rho_W(T(u))\xi\\
&=&\nonumber-\mu(v)\rho_{\h}(T(u))\huaT(\xi)-\mu(\rho(T(u))v)\huaT\xi+\mu(v)\huaT\rho_W(T(u))\xi\\
&=&\nonumber\vartheta(u\cdot_T v)\xi-\mu(v)\huaT\mu(u)\huaT\xi\\
&=&\label{prerep2}\vartheta(u\cdot_T v)\xi-\vartheta(v)\vartheta(u)\xi.
\end{eqnarray*}
Thus   $(W; \theta, \vartheta)$ is a representation of the pre-Lie algebra $(V,\cdot_T).$
\end{proof}

\section{Cohomology of relative Rota-Baxter Lie algebras}\label{sec:cohomology}
In this section, we introduce a cohomology theory of relative Rota-Baxter Lie algebras with coefficients in  arbitrary representations. In particular, we introduce the notion of derivations on a relative Rota-Baxter Lie algebra which can be characterized by 1-cocycles. We also establish the relationship between the cohomology of a relative Rota-Baxter operator and the cohomology of the underlying pre-Lie algebra with coefficients in a representation.

First we recall the cohomology of a relative Rota-Baxter Lie algebra with coefficients in the adjoint representation given in \cite{LST}.
Let $((\g,[\cdot,\cdot]_{\g}),(V,\rho),T)$ be a relative Rota-Baxter Lie algebra. Define the set of  $0$-cochains $\frkC^0(\g,\rho,T)$ to be $0$, and define
the set of  $1$-cochains $\frkC^1(\g,\rho,T)$ to be $\gl(\g)\oplus \gl(V)$. For $n\geq 2$, define the space of    $n$-cochains $\frkC^n(\g,\rho,T)$ by
\begin{equation*}
\frkC^n(\g,\rho,T):=\Big(\Hom(\wedge^n\g,\g)\oplus \Hom(\wedge^{n-1}\g\otimes V,V)\Big)\oplus\Hom(\wedge^{n-1}V,\g).
\end{equation*}

Define the coboundary operator $\huaD:\frkC^n(\g,\rho,T)\rightarrow\frkC^{n+1}(\g,\rho,T)$ by
\begin{equation*}
\huaD(f,\theta)=(\delta f,\partial\theta+h_{T}f),\quad f\in\Hom(\wedge^n\g,\g)\oplus\Hom(\wedge^{n-1}\g\otimes V,V), \theta\in\Hom(\wedge^{n-1}V,\g),
\end{equation*}
where
\begin{itemize}
   \item $\delta: \Hom(\wedge^n\g,\g)\oplus\Hom(\wedge^{n-1}\g\otimes V, V)\rightarrow\Hom(\wedge^{n+1}\g,\g)\oplus\Hom(\wedge^n\g\otimes V, V)$ is defined as follows. For $f=(f_\g,f_V)\in\Hom(\wedge^n\g,\g)\oplus\Hom(\wedge^{n-1}\g\otimes V, V)$, we write
       \begin{equation*}
       \delta(f)=\big((\delta (f))_\g, (\delta (f))_V\big).
       \end{equation*}
       Then $(\delta (f))_\g=\dM_{\CE}f_{\g}$, where $\dM_{\CE}$ is the Chevalley-Eilenberg coboundary operator of the Lie algebra $\g$ with coefficients in the adjoint representation \cite{CHE}, and $(\delta(f))_V$ is given by
       \begin{eqnarray}
\nonumber&&(\delta(f))_V(x_1,\cdots,x_{n},v)\\
\label{cohomology-algebra-rep-V}&=&\sum_{1\le i<j\le n}(-1)^{i+j}f_V([x_i,x_j]_\g,x_1,\cdots,\hat{x}_i,\cdots,\hat{x}_j,\cdots,x_n,v)+(-1)^{n-1}\rho(f_\g(x_1,\cdots,x_{n}))v\\
\nonumber&&+\sum_{i=1}^{n}(-1)^{i+1}\Big(\rho(x_i)f_V(x_1,\cdots,\hat{x}_i,\cdots,x_n,v)-f_V\big(x_1,\cdots,\hat{x}_i,\cdots,x_n,\rho(x_i)v\big)\Big).
\end{eqnarray}
  \item $\partial: \Hom(\wedge^{n-1}V,\g)\rightarrow\Hom(\wedge^nV,\g)$ is defined by
  \begin{eqnarray*}
&& (\partial \theta)(v_1,\cdots,v_{n})\notag\\
&=&\sum_{i=1}^{n}(-1)^{i+1}[T(v_i),\theta(v_1,\cdots,\hat{v}_i,\cdots, v_{n})]_\g+\sum_{i=1}^{n}(-1)^{i+1}T(\rho(\theta(v_1,\cdots,\hat{v}_i,\cdots, v_{n}))v_i)\label{eq:odiff}\\
&&+\sum_{1\le i<j\le n}(-1)^{i+j}\theta(\rho(Tv_i)v_j-\rho(Tv_j)v_i,v_1,\cdots,\hat{v}_i,\cdots,\hat{v}_j,\cdots, v_{n}). \notag
\end{eqnarray*}
  \item $h_T: \Hom(\wedge^n\g,\g)\oplus\Hom(\wedge^{n-1}\g\otimes V,V)\rightarrow\Hom(\wedge^nV,\g)$ is defined by
  \begin{eqnarray*}\label{key-cohomology-T}
&&\nonumber( h_Tf)(v_1,\cdots,v_n)\\
&=&(-1)^{n}f_\g(T(v_1),\cdots,T(v_n))+\sum_{i=1}^{n}(-1)^{i+1}T\big(f_V(T(v_1),\cdots,T(v_{i-1}),T(v_{i+1}),\cdots,T(v_n),v_i)\big).
\end{eqnarray*}
\end{itemize}

\begin{thm}\label{1}{\rm (\cite{LST})}
  With the above notations,  $(\oplus _{n=0}^{+\infty}\frkC^n(\g,\rho,T),\huaD)$ is a cochain complex, i.e. $$\huaD\circ \huaD=0.$$
\end{thm}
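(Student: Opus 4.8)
The plan is to verify the identity $\huaD\circ\huaD=0$ by exploiting the tripartite decomposition of the cochain complex. Recall that an $n$-cochain $(f,\theta)=((f_\g,f_V),\theta)$ decomposes into a piece $f_\g\in\Hom(\wedge^n\g,\g)$, a piece $f_V\in\Hom(\wedge^{n-1}\g\otimes V,V)$, and a piece $\theta\in\Hom(\wedge^{n-1}V,\g)$, and the coboundary is $\huaD(f,\theta)=(\delta f,\partial\theta+h_Tf)$ where $\delta=(\dM_\CE,(\delta)_V)$ acts on the first two pieces and $\partial,h_T$ feed into the third. Applying $\huaD$ twice gives $\huaD^2(f,\theta)=(\delta^2 f,\;\partial(\partial\theta+h_Tf)+h_T(\delta f))$, so the vanishing splits into three separate identities:
\begin{eqnarray*}
\delta^2&=&0,\\
\partial^2&=&0,\\
\partial\circ h_T+h_T\circ\delta&=&0.
\end{eqnarray*}
The first identity further decomposes, since $\delta=(\dM_\CE,(\delta)_V)$, into $\dM_\CE^2=0$ on the $\g$-component and a compatibility on the $V$-component.

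First I would dispose of the two ``diagonal'' identities, which are essentially classical. The relation $\dM_\CE\circ\dM_\CE=0$ is just the statement that the Chevalley--Eilenberg differential of $\g$ with coefficients in the adjoint representation squares to zero, so it is cited rather than proved. The identity $(\delta)_V^2=0$ says that the formula \eqref{cohomology-algebra-rep-V} is the Chevalley--Eilenberg differential of $\g$ with coefficients in the representation $\rho$ on $V$, twisted by the term $(-1)^{n-1}\rho(f_\g(\cdots))v$ coupling it to the $\g$-component; checking $\delta^2=0$ here amounts to verifying that $\delta$ is precisely the Chevalley--Eilenberg differential of $\g$ valued in the representation $\End(\g\oplus V)$-type module attached to $(\g,\rho)$, which again reduces to the standard $\dM_\CE^2=0$ once one recognizes the structure. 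The identity $\partial^2=0$ is the statement that $\partial$ is the coboundary operator computing the cohomology of the pre-Lie algebra $(V,\cdot_T)$ (equivalently, the Lie algebra $(V,[\cdot,\cdot]_C)$ with $[u,v]_C=\rho(Tu)v-\rho(Tv)u$) with coefficients in the representation afforded by $\rho\circ T$ and the bracket with $T(-)$; here the relative Rota--Baxter condition $[Tu,Tv]_\g=T([u,v]_C)$ is exactly what makes $T$ a morphism of Lie algebras and guarantees $\partial^2=0$.

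\emph{The hard part will be} the cross-term identity $\partial\circ h_T+h_T\circ\delta=0$, which is the genuinely new content tying the three pieces together. Here I would proceed by a direct, if lengthy, computation: expand both $\partial(h_Tf)(v_1,\dots,v_{n+1})$ and $(h_T\delta f)(v_1,\dots,v_{n+1})$ using the explicit formulas for $\partial$, $h_T$, $\dM_\CE$, and $(\delta)_V$, then collect terms. The evaluation runs over the variables $T(v_1),\dots,T(v_{n+1})$ and involves brackets $[T(v_i),T(v_j)]_\g$ together with $T\rho(T(v_j))v_i$ expressions. The crucial simplification is again the relative Rota--Baxter identity, which converts every occurrence of $[T(v_i),T(v_j)]_\g$ into $T(\rho(Tv_i)v_j-\rho(Tv_j)v_i)$; after this substitution, the terms arising from the Lie bracket part of $\partial h_Tf$ should cancel against the terms arising from the $\dM_\CE f_\g$ part of $h_T\delta f$, while the terms involving $T\rho(\cdots)v_i$ from $\partial h_Tf$ cancel against the $(\delta)_Vf_V$ part of $h_T\delta f$. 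I expect the sign bookkeeping—tracking the interaction between the $(-1)^n$ weights in $h_T$, the $(-1)^{i+j}$ weights in $\partial$, and the shifts when a variable is moved to the distinguished last slot—to be the main source of difficulty; organizing the terms by which pair $(v_i,v_j)$ they involve, and matching them up pairwise, is the cleanest way to keep the cancellations transparent. This is precisely the content that \cite{LST} established via the higher-derived-bracket $L_\infty$-structure, so as a sanity check I would confirm that my three identities are equivalent to the Maurer--Cartan/twisting calculus underlying that construction, which provides an a priori guarantee that the cancellation must occur.
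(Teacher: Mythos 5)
First, note that the paper itself does not prove this statement: Theorem \ref{1} is quoted from \cite{LST}, where $\huaD$ arises as the differential twisted by the Maurer--Cartan element encoding the relative Rota--Baxter structure in an $L_\infty$-algebra built from higher derived brackets, so that $\huaD\circ\huaD=0$ is automatic from the Maurer--Cartan equation. Your direct-verification route is therefore genuinely different from the argument the paper relies on. Your reduction of $\huaD\circ\huaD=0$ to the three identities $\delta^2=0$, $\partial^2=0$ and $\partial\circ h_T+h_T\circ\delta=0$ is correct, and the way you dispose of the first two is essentially right: $\delta$ is the Chevalley--Eilenberg differential of the semidirect product $\g\ltimes_\rho V$ with adjoint coefficients restricted to a subcomplex, and $\partial$ is the Chevalley--Eilenberg differential of the Lie algebra $(V,[\cdot,\cdot]_C)$ with coefficients in the $V$-module structure $\varrho(v)x=[Tv,x]_\g+T(\rho(x)v)$ on $\g$. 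Be aware, though, that the latter requires an actual verification that $\varrho$ is a representation; this uses the relative Rota--Baxter identity and the Jacobi identity together, and is strictly more than the observation that $T$ is a morphism of Lie algebras.

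The genuine gap is the cross-term identity $\partial\circ h_T+h_T\circ\delta=0$, which you correctly single out as the only genuinely new content and then do not prove. What you offer is a predicted cancellation pattern, and the prediction is itself too coarse: for instance, the terms $[T(v_i),T(f_V(\cdots))]_\g$ occurring in $\partial(h_Tf)$ do not cancel against anything directly --- they must first be rewritten by the relative Rota--Baxter identity as $T\big(\rho(T(v_i))f_V(\cdots)\big)-T\big(\rho(T(f_V(\cdots)))v_i\big)$, after which the first piece matches a $(\delta f)_V$-contribution inside $h_T(\delta f)$ while the second cancels part of the sum $\sum_i(-1)^{i+1}T\big(\rho((h_Tf)(\cdots))v_i\big)$ occurring in $\partial(h_Tf)$ itself. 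Until this bookkeeping is carried out in full (it is a lengthy computation, with exactly the sign interactions you anticipate), the proof is not complete; and the closing appeal to \cite{LST} as an ``a priori guarantee'' concedes the theorem back to the very citation you are trying to replace. Either carry out the expansion of $\partial\circ h_T+h_T\circ\delta$ term by term, or prove the statement structurally by exhibiting $(\oplus_{n}\frkC^n(\g,\rho,T),\huaD)$ as the twisted complex of \cite{LST} --- but one of the two has to be done.
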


Let $[W\stackrel{\huaT}\rightarrow\h, \rho_\h, \rho_W, \mu]$ be a representation of $((\g,[\cdot,\cdot]_{\g}),(V;\rho), T)$.  Define the set of $0$-cochains $\frkC^0(\g,\rho,T;\rho_{\h},\rho_{W},\mu)$ to be $0$, and define the set of $1$-cochains $\frkC^1(\g,\rho,T;\rho_\h,\rho_W,\mu)$ to be $\Hom(\g,\h)\oplus\Hom(V,W)$. For $n\geq 2$, we define the set of $n$-cochains $\frkC^n(\g,\rho,T;\rho_{\h},\rho_{W},\mu)$  by
$$
\frkC^n(\g,\rho,T;\rho_{\h},\rho_{W},\mu)=\Big(\Hom(\wedge^{n}\g,\h)\oplus \Hom(\wedge^{n-1}\g\otimes V,W)\Big)\oplus\Hom(\wedge^{n-1}V,\h).
$$
Define the coboundary operator
\begin{equation*}
\huaD_R:\frkC^n(\g,\rho,T;\rho_{\h},\rho_{W},\mu)\lon \frkC^{n+1}(\g,\rho,T;\rho_{\h},\rho_{W},\mu)
\end{equation*}
by
\begin{equation}\label{eq:Dexplicit}
\huaD_R(f,\theta)=(\delta f,\partial\theta+h_{T}(f)),
\end{equation}
for all $f\in\Hom(\wedge^{n}\g,\h)\oplus \Hom(\wedge^{n-1}\g\otimes V,W), \theta\in\Hom(\wedge^{n-1}V,\h)$,
 where
 \begin{itemize}
   \item  $\delta:\Hom(\wedge^{n}\g,\h)\oplus \Hom(\wedge^{n-1}\g\otimes V,W)\lon \Hom(\wedge^{n+1}\g,\h)\oplus \Hom(\wedge^{n}\g\otimes V,W)$ is defined as follows. For $f=(f_{\h},f_{W})\in\Hom(\wedge^{n}\g,\h)\oplus\Hom(\wedge^{n-1}\g\otimes V,W)$,   we write
\begin{equation*}
\delta(f)=\big((\delta(f))_{\h},(\delta(f))_{W}\big).
\end{equation*}
Then $(\delta(f))_{\h}=\dM_{\CE}f_\h$, where $\dM_\CE$ is the Chevalley-Eilenberg coboundary operator of the Lie algebra $\g$ with coefficients in the representation $(\h,\rho_\h)$, and $(\delta(f))_{W}$ is given by
\begin{eqnarray*}
&&(\delta(f))_{W}(x_{1},x_{2},\cdots,x_{n},v)\\
&=&\sum_{1\leq i<j\leq n}(-1)^{i+j}f_{W}([x_{i},x_{j}]_\g,x_{1},\cdots,\widehat{x_{i}},\cdots,\widehat{x_{j}},x_{n},v)-(-1)^{n-1}\mu(v)f_{\h}(x_{1},\cdots,x_{n})\\
&&+\sum_{i=1}^{n}(-1)^{i+1}\big(\rho_{W}(x_{i})f_{W}(x_{1},\cdots,\widehat{x_{i}},\cdots,x_{n},v)-f_{W}(x_{1},\cdots,\widehat{x_{i}},\cdots,x_{n},\rho(x_{i})v)\big).
\end{eqnarray*}
\item $\partial:\Hom(\wedge^{n-1}V,\h)\lon \Hom(\wedge^{n}V,\h)$ is defined by
\begin{eqnarray*}
&&\partial\theta(v_{1},\cdots,v_{n})\\
&=&\sum_{i=1}^{n}(-1)^{i+1}\rho_{\h}(T(v_{i}))\theta(v_{1},\cdots,\hat{v_{i}},\cdots,v_{n})-\sum_{i=1}^{n}(-1)^{i+1}\huaT(\mu(v_{i})\theta(v_{1},\cdots,\widehat{v_{i}},\cdots,v_{n}))\\
&&+\sum_{1\leq i<j\leq n}(-1)^{i+j}\theta(\rho(T(v_{i}))v_{j}-\rho(T(v_{j}))v_{i},v_{1},\cdots,\hat{v_{i}},\cdots,\hat{v_{j}},\cdots,v_{n}).
\end{eqnarray*}

\item $h_{T}: \Hom(\wedge^{n}\g,\h)\oplus\Hom(\wedge^{n-1}\g\otimes V,W)\rightarrow\Hom(\wedge^{n}V,\h)$ is defined by
\begin{eqnarray*}
&&h_{T}(f)(v_{1},\cdots,v_{n})\\
&=&(-1)^{n}f_{\h}(T(v_{1}),T(v_{2}),\cdots,T(v_{n}))+\sum_{i=1}^{n}(-1)^{i+1}\huaT (f_{W}(T(v_{1}),\cdots,T(v_{i-1}),T(v_{i+1}),T(v_{n}),v_{i})).
\end{eqnarray*}

 \end{itemize}

\begin{thm}\label{cohomology-of-LLT}
  With the above notations,  $(\oplus _{n=0}^{+\infty}\frkC^n(\g,\rho,T;\rho_{\h},\rho_{W},\mu),\huaD_R)$ is a cochain complex, i.e. $$\huaD_R\circ \huaD_R=0.$$
\end{thm}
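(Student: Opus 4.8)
The plan is to deduce $\huaD_R\circ\huaD_R=0$ from the already-established adjoint case (Theorem \ref{1}) by passing to the semidirect product of Proposition \ref{semiRB}, rather than expanding every term by hand. Given the representation $[W\stackrel{\huaT}\rightarrow\h,\rho_\h,\rho_W,\mu]$, I would first form the semidirect product relative Rota-Baxter Lie algebra $\widehat{R}=((\g\oplus\h,[\cdot,\cdot]_\ltimes),(V\oplus W,\varrho),\mathfrak{T})$. By Theorem \ref{1} applied to $\widehat{R}$, its adjoint cochain complex $(\frkC^\bullet(\g\oplus\h,\varrho,\mathfrak{T}),\huaD)$ satisfies $\huaD\circ\huaD=0$, and the idea is to realize $(\frkC^\bullet(\g,\rho,T;\rho_\h,\rho_W,\mu),\huaD_R)$ as a direct summand of this complex.

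To do so, I assign weight $0$ to $\g$ and $V$ and weight $1$ to $\h$ and $W$, and give each homogeneous multilinear cochain the weight $\mathrm{wt}(\text{output})-\mathrm{wt}(\text{inputs})$. A direct inspection shows all structure maps of $\widehat{R}$ are weight-preserving: the bracket $[\cdot,\cdot]_\ltimes$ sends $(\h,\h)$ to $0$ and $(\g,\h)$ into $\h$; the representation satisfies $\varrho(\alpha)\xi=0$ and $\varrho(\alpha)u=-\mu(u)\alpha\in W$ for $\alpha\in\h$, $\xi\in W$, $u\in V$; and $\mathfrak{T}$ maps $V$ into $\g$ and $W$ into $\h$. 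Since $\huaD$ is assembled from these maps together with insertions and evaluations, it preserves the weight of a cochain. Enumerating the weight-$(+1)$ components of $\frkC^n(\g\oplus\h,\varrho,\mathfrak{T})$ — output in $\h$ with all inputs in $\g$, output in $W$ with inputs in $\wedge^{n-1}\g\otimes V$, and output in $\h$ with inputs in $\wedge^{n-1}V$ — recovers exactly the three summands $\Hom(\wedge^n\g,\h)$, $\Hom(\wedge^{n-1}\g\otimes V,W)$ and $\Hom(\wedge^{n-1}V,\h)$ constituting $\frkC^n(\g,\rho,T;\rho_\h,\rho_W,\mu)$. Thus this space is precisely the weight-$(+1)$ subcomplex, on which $\huaD^2=0$ holds automatically.

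The one genuine verification — and the step I expect to be the main obstacle — is that the operator $\huaD_R$ of \eqref{eq:Dexplicit} agrees, under this identification, with the restriction of $\huaD$ to the weight-$(+1)$ subspace. This amounts to expanding $\delta$, $\partial$ and $h_T$ for $\widehat{R}$ using the explicit forms of $[\cdot,\cdot]_\ltimes$, $\varrho$ and $\mathfrak{T}$, discarding all terms of weight $\neq 1$, and matching the survivors with the stated formulas; here the compatibility conditions \eqref{rep3} and \eqref{rep4} are exactly what forces the $\mu$- and $\huaT$-terms to appear correctly. Once this bookkeeping is done, $\huaD_R\circ\huaD_R=0$ follows at once from $\huaD\circ\huaD=0$.

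If one prefers a self-contained computation, the alternative is to write $\huaD_R(f,\theta)=(\delta f,\partial\theta+h_T f)$ and expand $\huaD_R^2(f,\theta)=(\delta^2 f,\ \partial^2\theta+\partial h_T f+h_T\delta f)$, so the claim splits into $\delta^2=0$, $\partial^2=0$ and the cross relation $\partial\circ h_T+h_T\circ\delta=0$. The first is the Chevalley--Eilenberg identity for $\g$ acting on the graded module built from $(\h,\rho_\h)$, $(W,\rho_W)$ and $\mu$, using \eqref{rep3}; the second is $\dM_\CE^2=0$ for the sub-adjacent Lie algebra $(V,[\cdot,\cdot]_C)$ acting on $\h$ via $v\mapsto\rho_\h(T(v))-\huaT\circ\mu(v)$, which in turn requires checking that this map is a representation, using \eqref{rep4} and the relative Rota-Baxter identity. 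The cross relation is the hard part: it is where the relative Rota-Baxter identity and both \eqref{rep3}, \eqref{rep4} interact, and verifying it directly is a long cancellation — precisely the computation the semidirect-product argument is designed to avoid.
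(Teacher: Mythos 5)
Your proposal is correct and follows essentially the same route as the paper: the paper's proof is exactly to form the semidirect product of Proposition \ref{semiRB} and identify $(\oplus_n\frkC^n(\g,\rho,T;\rho_\h,\rho_W,\mu),\huaD_R)$ as a subcomplex of the adjoint complex $(\oplus_n\frkC^n(\g\oplus\h,\varrho,\frak{T}),\huaD)$ from Theorem \ref{1}. Your weight-grading argument is a clean way of making precise the step the paper leaves as ``straightforward to deduce.''
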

\begin{proof}
 We only give a sketch of the proof and leave details to readers. Consider the semidirect product relative Rota-Baxter Lie algebra $ ((\g\oplus \h,[\cdot,\cdot]_{\ltimes}), (V\oplus W,\varrho),\frak{T})$ given in Proposition \ref{semiRB}, and the associated cochain complex $(\oplus_{n=0}^{+\infty}\frkC^n(\g\oplus\h,\varrho, \frak{T}),\huaD)$ given in Theorem \ref{1}. It is straightforward to deduce that
  $(\oplus _{n=0}^{+\infty}\frkC^n(\g,\rho,T;\rho_\h,\rho_W,\mu),\huaD_R)$ is a subcomplex of $(\oplus_{n=0}^{+\infty}\frkC^n(\g\oplus\h,\varrho, \frak{T}),\huaD)$. Thus, $\huaD_R\circ \huaD_R=0.$
\end{proof}
\begin{defi}
  The  cohomology of the cochain complex  $(\oplus_{n=0}^{+\infty}\frkC^n(\g,\rho,T;\rho_{\h},\rho_{W},\mu),\huaD_R)$ is called {\bf the  cohomology of the relative Rota-Baxter Lie algebra} with coefficients in the representation $[W\stackrel{\huaT}\rightarrow\h, \rho_\h, \rho_W, \mu]$. The corresponding $n$-th cohomology group is denoted by $\huaH^n(\g,\rho,T;\rho_{\h},\rho_{W},\mu)$.
\end{defi}

\begin{rmk}
  When the representation is the adjoint representation, the above cohomology reduces to the cohomology given in \cite{LST}.
\end{rmk}

Now we analyze  the first cohomology group $\huaH^1(\g, \rho, T; \rho_{\h}, \rho_{W}, \mu)$. Applications of the second and the third cohomology groups will be given in  the following sections. For all $f=(f_\h, f_W)\in\Hom(\g,\h)\oplus\Hom(V,W)=\frkC^1(\g,\rho,T;\rho_\h,\rho_W,\mu)$,   we have
\begin{eqnarray*}
(\delta(f_\h, f_W))_\h&=&\dM_{\CE}f_\h,\\
(\delta(f_\h, f_W))_W(x,v)&=& -\mu(v)f_\h(x)+\rho_W(x)f_W(v)-f_W(\rho(x)v),\\
h_{T}(f_\h, f_W)(v)&=&-f_\h(T(v))+\huaT f_W(v), \quad \forall x\in\g, v\in V.
\end{eqnarray*}
 Therefore, $(f_\h, f_W)$ is a 1-cocycle if and only if
 \begin{eqnarray}
  \label{eq:11}\dM_{\CE}f_\h&=&0,\\
  \label{eq:12} -\mu(v)f_\h(x)+\rho_W(x)f_W(v)-f_W(\rho(x)v)&=&0,\\
   \label{eq:13}-f_\h(T(v))+\huaT f_W(v)&=&0.
 \end{eqnarray}
Since $\frkC^0(\g,\rho,T;\rho_{\h},\rho_{W},\mu)=0$, there are no 1-coboundaries. Thus we have
\begin{equation*}
\huaH^1(\g, \rho, T; \rho_{\h}, \rho_{W}, \mu)=\{(f_\h, f_W)\in\Hom(\g,\h)\oplus\Hom(V,W) ~\mbox{satisfying}~\eqref{eq:11}\mbox{-}\eqref{eq:13}\}.
\end{equation*}

Derivations on a Lie algebra $\g$ are $1$-cocycles on $\g$ with coefficients in the adjoint representation. Follow this philosophy, we define derivations on a relative Rota-Baxter Lie algebra $((\g,[\cdot,\cdot]_{\g}),(V,\rho),T)$ as follows.

\begin{defi}\label{defi:der}
  A {\bf derivation on a relative Rota-Baxter Lie algebra} $((\g,[\cdot,\cdot]_{\g}),(V,\rho),T)$ consists of linear maps  $ f_\g \in\Hom(\g,\g)$ and $f_V\in\Hom(V,V)$  such that $f_\g$ is a derivation on  the Lie algebra $\g$ and the following equalities hold:
\begin{eqnarray}
\label{eq:der1}T\circ f_V&=&f_\g\circ T,\\
\label{eq:der2}f_V(\rho(x)u)&=&\rho(x)f_V(u)+\rho(f_\g(x))u,\quad \forall x\in\g, u\in V.
\end{eqnarray}
Or in terms of commutative diagrams:
 \[
\small{ \xymatrix{
 V\ar[d]^{T} \ar[rr]^{   f_V} &  &  V\ar[d]^{T}  \\
 \g\ar[rr]^{  f_\g } & &\g,}
}   \quad  \small{ \xymatrix{
 V \ar[rr]^{   f_V} &  &  V  \\
 \g\otimes V  \ar[u]^{\rho}\ar[rr]^{  f_\g\otimes \Id_V+\Id_\g\otimes f_V} & &\g\otimes V\ar[u]_{\rho}.}
}
\]

 A {\bf derivation on a Rota-Baxter Lie algebra} $(\g,[\cdot,\cdot]_{\g}T)$ is a linear map  $ f \in\Hom(\g,\g)$   such that $f$ is a derivation on  the Lie algebra $\g$ and $f\circ T=T\circ f$.
\end{defi}

By \eqref{eq:11}\mbox{-}\eqref{eq:13} and the definition of the adjoint representation of a relative Rota-Baxter Lie algebra, we immediately have the following characterization of derivations on a relative Rota-Baxter Lie algebra.
\begin{pro}\label{pro:der}
A  derivation on a relative Rota-Baxter Lie algebra $((\g,[\cdot,\cdot]_{\g}),(V,\rho),T)$ is a $1$-cocycle on $((\g,[\cdot,\cdot]_{\g}),(V,\rho),T)$  with coefficients in the adjoint representation.
\end{pro}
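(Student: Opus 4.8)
The plan is to unwind both sides and observe that they coincide condition by condition, so that the proof is a direct comparison of definitions rather than a computation. First I would record that, by Example~\ref{ex:adjrep}, the adjoint representation is $[V\stackrel{T}\rightarrow\g,\ad,\rho,\bar\rho]$, so specializing the general cocycle framework of \eqref{eq:11}--\eqref{eq:13} to this representation amounts to setting $\h=\g$, $W=V$, $\huaT=T$, $\rho_\h=\ad$, $\rho_W=\rho$, and $\mu=\bar\rho$. In particular a $1$-cochain then lives in $\Hom(\g,\g)\oplus\Hom(V,V)$, and so is a pair $(f_\g,f_V)$ of exactly the type appearing in Definition~\ref{defi:der}.

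Next I would substitute these data into the three cocycle equations already isolated in the excerpt. Condition \eqref{eq:11} reads $\dM_{\CE}f_\g=0$; writing the Chevalley--Eilenberg differential for the adjoint representation as $(\dM_{\CE}f_\g)(x,y)=[x,f_\g(y)]_\g-[y,f_\g(x)]_\g-f_\g([x,y]_\g)$, its vanishing is precisely the statement that $f_\g$ is a derivation of the Lie algebra $\g$. Condition \eqref{eq:13} reads $-f_\g(T(v))+Tf_V(v)=0$, which is literally $T\circ f_V=f_\g\circ T$, i.e. equation \eqref{eq:der1}.

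The only step needing attention is \eqref{eq:12}, since the sign built into $\mu=\bar\rho$ must be tracked: I would use $\bar\rho(v)(f_\g(x))=-\rho(f_\g(x))v$ to rewrite $-\mu(v)f_\g(x)=\rho(f_\g(x))v$, so that \eqref{eq:12} becomes $\rho(f_\g(x))v+\rho(x)f_V(v)-f_V(\rho(x)v)=0$, equivalently $f_V(\rho(x)v)=\rho(x)f_V(v)+\rho(f_\g(x))v$, which is exactly \eqref{eq:der2}. Thus the cocycle conditions for the adjoint representation match the derivation conditions term by term, and the proof is complete. I do not expect any genuine obstacle: the statement is a definitional identification, so I would simply invoke the explicit forms of $(\delta f)_\h$, $(\delta f)_W$, and $h_Tf$ displayed above rather than recompute them, the only care being the bookkeeping of the sign in $\bar\rho$.
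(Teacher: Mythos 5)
Your proposal is correct and follows exactly the route the paper intends: the paper's own justification is simply that the claim follows ``by \eqref{eq:11}--\eqref{eq:13} and the definition of the adjoint representation,'' and you have spelled out precisely that specialization, with the sign in $\bar\rho$ handled correctly. Nothing further is needed.
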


It is known that $\left(\begin{array}{cc}
  0&T\\0&0
\end{array}\right)$ is a Rota-Baxter operator on the semidirect product Lie algebra $\g\ltimes_\rho V$.
\begin{pro}
$(f_\g,f_V)$ is a derivation on a relative Rota-Baxter Lie algebra $((\g,[\cdot,\cdot]_{\g}),(V,\rho),T)$ if and only if $\left(\begin{array}{cc}
  f_\g&0\\0&f_V
\end{array}\right)$ is a derivation on the   Rota-Baxter Lie algebra $(\g\ltimes_\rho V,\left(\begin{array}{cc}
  0&T\\0&0
\end{array}\right))$.

\end{pro}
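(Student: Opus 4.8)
The plan is to unwind both notions of derivation into their defining equations and check that they coincide termwise. Write the candidate map as $F=\left(\begin{smallmatrix} f_\g&0\\0&f_V\end{smallmatrix}\right)$, so that $F(x+u)=f_\g(x)+f_V(u)$, and write $\mathbf{T}=\left(\begin{smallmatrix} 0&T\\0&0\end{smallmatrix}\right)$, so that $\mathbf{T}(x+u)=T(u)$. By the definition of a derivation on a Rota-Baxter Lie algebra, $F$ is such a derivation precisely when (i) $F$ is a derivation of the Lie algebra $\g\ltimes_\rho V$, and (ii) $F\circ\mathbf{T}=\mathbf{T}\circ F$. I will analyze these two conditions separately and match them against the defining conditions \eqref{eq:der1} and \eqref{eq:der2} of a derivation on the relative Rota-Baxter Lie algebra.

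For condition (i), I would expand both $F([x+u,y+v]_\ltimes)$ and $[F(x+u),y+v]_\ltimes+[x+u,F(y+v)]_\ltimes$ using the semidirect bracket $[x+u,y+v]_\ltimes=[x,y]_\g+\rho(x)v-\rho(y)u$, and then compare the $\g$-component and the $V$-component. The $\g$-component reads off directly as $f_\g([x,y]_\g)=[f_\g(x),y]_\g+[x,f_\g(y)]_\g$, i.e. $f_\g$ is a derivation of $\g$. Collecting, in the $V$-component, the terms carrying $v$ separately from those carrying $u$, each block reduces to $f_V(\rho(x)v)=\rho(f_\g(x))v+\rho(x)f_V(v)$, which is exactly \eqref{eq:der2}. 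Hence (i) is equivalent to the pair of statements ``$f_\g$ is a derivation of $\g$'' together with \eqref{eq:der2}.

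For condition (ii), a one-line computation gives $F(\mathbf{T}(x+u))=f_\g(T(u))$ and $\mathbf{T}(F(x+u))=T(f_V(u))$, so $F\circ\mathbf{T}=\mathbf{T}\circ F$ is equivalent to $f_\g\circ T=T\circ f_V$, which is \eqref{eq:der1}. Combining the two analyses, $F$ is a derivation of the Rota-Baxter Lie algebra $(\g\ltimes_\rho V,\mathbf{T})$ if and only if $f_\g$ is a derivation of $\g$ and both \eqref{eq:der1} and \eqref{eq:der2} hold; by Definition \ref{defi:der} this is precisely the statement that $(f_\g,f_V)$ is a derivation of the relative Rota-Baxter Lie algebra, which yields the claimed equivalence.

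There is no genuine obstacle here; the content is entirely bookkeeping, and the argument runs identically in both directions, giving the ``if and only if'' at once. The only point requiring a little care is the $V$-component of condition (i): since $x,y,u,v$ vary independently, the $u$-dependent and $v$-dependent terms must balance separately, and one must observe that both of these matchings collapse to the single identity \eqref{eq:der2} rather than to two inequivalent conditions.
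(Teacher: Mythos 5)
Your proof is correct and follows the same route as the paper's (the paper merely states that the two equivalences are straightforward, while you write out the componentwise expansion). Your added care about the $u$- and $v$-dependent terms in the $V$-component both collapsing to \eqref{eq:der2} is accurate and is exactly the detail the paper leaves implicit.
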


\begin{proof}
  It is straightforward to deduce that \eqref{eq:der2} and the fact that $f_\g$ is a derivation on the Lie algebra $\g$ is equivalent to that $\left(\begin{array}{cc}
  f_\g&0\\0&f_V
\end{array}\right)$ is a derivation on the semidirect product Lie algebra $\g\ltimes_\rho V$. Moreover, it is obvious that \eqref{eq:der1} is equivalent to \begin{equation}\label{eq:comm}
\left(\begin{array}{cc}
  f_\g&0\\0&f_V
\end{array}\right)\left(\begin{array}{cc}
  0&T\\0&0
\end{array}\right)=\left(\begin{array}{cc}
  0&T\\0&0
\end{array}\right)\left(\begin{array}{cc}
  f_\g&0\\0&f_V
\end{array}\right).
\end{equation}
The proof is finished.
\end{proof}

We can pick a subcomplex and a quotient complex from    $(\oplus_{n=0}^{+\infty}\frkC^n(\g,\rho,T;\rho_{\h},\rho_{W},\mu),\huaD_R)$.
For $n> 1$, denote by \begin{eqnarray*}
\bar{\frkC}^n(\g,\rho;\rho_{\h},\rho_{W})&:=&\Hom(\wedge^{n}\g,\h)\oplus \Hom(\wedge^{n-1}\g\otimes V,W),\\
\tilde{\frkC}^n(T;\rho_{\h},\mu)&:=& \Hom(\wedge^{n-1}V,\h).
\end{eqnarray*}
Define $\tilde{\frkC}^1(T;\rho_{\h},\mu)=\tilde{\frkC}^0(T;\rho_{\h},\mu)=0$ and $\bar{\frkC}^0(\g,\rho;\rho_{\h},\rho_{W}) =0,~\bar{\frkC}^1(\g,\rho;\rho_{\h},\rho_{W})=\Hom(\g,\h)\oplus \Hom(V,W).$ It is obvious that
$$
 \frkC^n(\g,\rho,T;\rho_{\h},\rho_W,\mu)=\bar{\frkC}^n(\g,\rho;\rho_{\h},\rho_{W})\oplus \tilde{\frkC}^n(T;\rho_{\h},\mu).
$$
Then by Theorem \ref{cohomology-of-LLT} and the definition of $\huaD_R$, $(\oplus_{n=0}^{+\infty}\tilde{\frkC}^n(T;\rho_{\h},\mu),\partial)$ is a subcomplex of the cochain complex  $(\oplus _{n=0}^{+\infty}\frkC^n(\g,\rho,T;\rho_\h,\rho_W,\mu),\huaD_R)$ and $(\oplus_{n=0}^{+\infty}\bar{\frkC}^n(\g,\rho;\rho_{\h},\rho_{W}),\delta)$ is a quotient cochain complex.
The formula of the coboundary operator $\huaD_R$ can be well-explained by the following diagram:
 \[
\small{ \xymatrix{
\cdots
\longrightarrow \bar{\frkC}^n(\g,\rho;\rho_{\h},\rho_{W})\ar[dr]^{h_T} \ar[r]^{\qquad\delta} & \bar{\frkC}^{n+1}(\g,\rho;\rho_{\h},\rho_{W}) \ar[dr]^{h_T} \ar[r]^{\delta\qquad}  & \bar{\frkC}^{n+2}(\g,\rho;\rho_{\h},\rho_{W})\longrightarrow\cdots  \\
\cdots\longrightarrow \tilde{\frkC}^n(T;\rho_{\h},\mu) \ar[r]^{\qquad\partial} &\tilde{\frkC}^{n+1}(T;\rho_{\h},\mu)\ar[r]^{\partial\qquad}&\tilde{\frkC}^{n+2}(T;\rho_{\h},\mu)\longrightarrow \cdots.}
}
\]
Denote the corresponding cohomology groups by $\huaH^n(\g,\rho;\rho_{\h},\rho_{W})$ and $\huaH^n(T;\rho_{\h},\mu)$ respectively. The former can be served as {\bf the cohomology groups of a LieRep pair} (a Lie algebra with a representation) with coefficients in an arbitrary representation, while the latter can be served as {\bf the cohomology groups of a relative Rota-Baxter operator} with coefficients in an arbitrary representation. See \cite{LST,TBGS} for more details about the cohomology groups of a LieRep pair and a relative Rota-Baxter operator with coefficients in themselves.

We have the following results connecting the various cohomology groups.

\begin{thm}\label{cohomology-exact}
Let $[W\stackrel{\huaT}\rightarrow\h, \rho_\h, \rho_W, \mu]$ be a representation of a relative Rota-Baxter Lie algebra $((\g,[\cdot,\cdot]_{\g}),(V;\rho), T)$.  Then there is a short exact sequence of the  cochain complexes:
$$
0\longrightarrow(\oplus_{n=0}^{+\infty}\tilde{\frkC}^n(T;\rho_{\h},\mu),\partial)\stackrel{\iota}{\longrightarrow}(\oplus _{n=0}^{+\infty} \frkC^n(\g,\rho,T;\rho_{\h},\rho_W,\mu),\huaD_R)\stackrel{p}{\longrightarrow} (\oplus_{n=0}^{+\infty}\bar{\frkC}^n(\g,\rho;\rho_{\h},\rho_{W}),\delta)\longrightarrow 0,
$$
where $\iota$ and $p$ are the inclusion map and the projection map.

Consequently, there is a long exact sequence of the  cohomology groups:
$$
\cdots\longrightarrow\huaH^n(T;\rho_{\h},\mu)\stackrel{\huaH^n(\iota)}{\longrightarrow}\huaH^n(\g,\rho,T;\rho_{\h},\rho_W,\mu)\stackrel{\huaH^n(p)}{\longrightarrow} \huaH^n(\g,\rho;\rho_{\h},\rho_{W})\stackrel{c^n}\longrightarrow \huaH^{n+1}(T;\rho_{\h},\mu)\longrightarrow\cdots,
$$
where the connecting map $c^n$ is defined by
$
c^n([\alpha])=[h_T\alpha],$  for all $[\alpha]\in \huaH^n(\g,\rho;\rho_{\h},\rho_{W}).$
\end{thm}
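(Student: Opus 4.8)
The plan is to recognize this as a direct application of the zig-zag (snake) lemma for a short exact sequence of cochain complexes, where the short exact sequence is essentially forced by the direct sum decomposition $\frkC^n(\g,\rho,T;\rho_{\h},\rho_W,\mu)=\bar{\frkC}^n(\g,\rho;\rho_{\h},\rho_{W})\oplus \tilde{\frkC}^n(T;\rho_{\h},\mu)$ already recorded above. First I would observe that $\iota$ (the inclusion of the $\tilde{\frkC}$-summand, sending $\theta\mapsto(0,\theta)$) and $p$ (the projection onto the $\bar{\frkC}$-summand, sending $(f,\theta)\mapsto f$) are respectively injective and surjective, with $\Img(\iota)=\Ker(p)$ holding term-by-term simply because of the direct sum.

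The crucial step is to check that $\iota$ and $p$ are cochain maps, and this follows immediately from the explicit formula $\huaD_R(f,\theta)=(\delta f,\partial\theta+h_T f)$. Indeed, on one hand $\huaD_R(\iota\theta)=\huaD_R(0,\theta)=(0,\partial\theta)=\iota(\partial\theta)$, so $\iota$ commutes with the differentials; on the other hand $p(\huaD_R(f,\theta))=p(\delta f,\partial\theta+h_T f)=\delta f=\delta(p(f,\theta))$, so $p$ commutes with the differentials as well. Together with the term-by-term exactness, this establishes the short exact sequence of cochain complexes.

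The long exact sequence is then the standard conclusion of the zig-zag lemma, so the only remaining work is to identify the connecting map $c^n$ explicitly. Given a class $[\alpha]\in\huaH^n(\g,\rho;\rho_{\h},\rho_{W})$ represented by a cocycle $\alpha\in\bar{\frkC}^n$ with $\delta\alpha=0$, I would lift $\alpha$ along $p$ to $(\alpha,0)\in\frkC^n$ and apply the total differential, obtaining $\huaD_R(\alpha,0)=(\delta\alpha,\,h_T\alpha)=(0,h_T\alpha)$. Since this element lies in $\Ker(p)=\Img(\iota)$, it is the image under $\iota$ of $h_T\alpha\in\tilde{\frkC}^{n+1}$, and by the definition of the zig-zag connecting map this yields $c^n([\alpha])=[h_T\alpha]$, exactly as claimed. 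To confirm that $h_T\alpha$ is a genuine cocycle I would invoke $\huaD_R\circ\huaD_R=0$ from Theorem \ref{cohomology-of-LLT}: applying it to $(\alpha,0)$ gives $\huaD_R(0,h_T\alpha)=(0,\partial(h_T\alpha))=0$, whence $\partial(h_T\alpha)=0$.

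I do not expect any genuine obstacle here; every step is routine homological algebra, and the proof essentially amounts to matching the abstract zig-zag construction against the off-diagonal component $h_T$ of $\huaD_R$. The one point that warrants care is verifying well-definedness of the connecting map, that is, that $[h_T\alpha]$ is independent of the choice of representative $\alpha$ and of the lift; but this is precisely the content of the zig-zag lemma and is automatic once $\iota$ and $p$ are known to be cochain maps. The compatibility of the naive section $\alpha\mapsto(\alpha,0)$ with the decomposition, used in the lifting, is immediate from the direct sum structure.
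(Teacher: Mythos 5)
Your proposal is correct and follows essentially the same route as the paper: the paper's (very terse) proof likewise reads off the short exact sequence and the formula $c^n([\alpha])=[h_T\alpha]$ directly from the explicit expression $\huaD_R(f,\theta)=(\delta f,\partial\theta+h_T f)$, with the long exact sequence coming from the standard zig-zag lemma. You have merely spelled out the details (chain-map checks for $\iota$ and $p$, the lift $(\alpha,0)$, and the cocycle condition $\partial(h_T\alpha)=0$ via $\huaD_R\circ\huaD_R=0$) that the paper leaves implicit.
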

\begin{proof}
 By  \eqref{eq:Dexplicit}, we have the short exact sequence  of cochain complexes which induces a long exact sequence of cohomology groups.   Also by \eqref{eq:Dexplicit},   $c^n$ is given by
$
c^n([f])=[h_Tf],$ for all $f\in \bar{\frkC}^n(\g,\rho;\rho_{\h},\rho_{W})$ and $[f]$ denotes the cohomological class of $f$.
\end{proof}

At the end of this section we recall the cohomology of a pre-Lie algebra  and establish the relation between the cohomology of a relative Rota-Baxter operator (that is the cohomology of the cochain complex $(\oplus_{n=0}^{+\infty}\tilde{\frkC}^n(T;\rho_{\h},\mu),\partial)$) and the cohomology of the underlying pre-Lie algebra.

The cohomology for a pre-Lie algebra $(A,\cdot_A)$ with coefficients in a representation $(W;\theta, \vartheta)$ is given as follows \cite{Dz}.
The set of $n$-cochains  $C^n(A,V)$ is given by
$$C^n(A,V)=\Hom(\wedge^{n-1}A\otimes A,W),\
n\geq 1.$$  The coboundary operator $\dM:\Hom(\wedge^{n-1}A\otimes A,W)\longrightarrow \Hom(\wedge^{n}A\otimes A,W)$ is given by
 \begin{eqnarray}
 \nonumber(\dM f)(x_1, \cdots,x_{n+1})
 \nonumber&=&\sum_{i=1}^{n}(-1)^{i+1}\theta(x_i)f(x_1, \cdots,\hat{x_i},\cdots,x_{n+1})\\
 \nonumber&&+\sum_{i=1}^{n}(-1)^{i+1}\vartheta(x_{n+1})f(x_1, \cdots,\hat{x_i},\cdots,x_n,x_i)\\
 \nonumber&&-\sum_{i=1}^{n}(-1)^{i+1}f(x_1, \cdots,\hat{x_i},\cdots,x_n,x_i\cdot_A x_{n+1})\\
\nonumber &&+\sum_{1\leq i<j\leq n}(-1)^{i+j}f([x_i,x_j]_C,x_1,\cdots,\hat{x_i},\cdots,\hat{x_j},\cdots,x_{n+1}),
\end{eqnarray}
for all $f\in \Hom(\wedge^{n-1}A\otimes A,W)$, $x_i\in A,~i=1,\cdots,n+1$. The cohomology of the cochain complex $(\oplus_{n\geq 1}C^n(A,V),\dM )$ is taken to be {\bf the cohomology of the pre-Lie algebra} $(A,\cdot_A)$ with coefficients in the representation $(W;\theta, \vartheta)$. We denote the corresponding $n$-th cohomology group by
$H_{\rm preLie}^n(A,W)$ and
$H_{\rm preLie}^\bullet(A,W):=\oplus_{n}H_{\rm preLie}^n(A,W).$

Let $[W\stackrel{\huaT}\rightarrow\h, \rho_\h, \rho_W, \mu]$ be a representation of a relative Rota-Baxter Lie algebra $((\g,[\cdot,\cdot]_{\g}),(V;\rho), T)$. Define a linear map $\Xi: \Hom(\wedge^{n-1}V,\h)\rightarrow\Hom(\wedge^{n-1}V\otimes V, W)$ by
\begin{equation*}
\Xi(\omega)(v_1,v_2,\cdots,v_{n-1},v_n)=\mu(v_n)\omega(v_1,v_2,\cdots,v_{n-1}),\quad \forall \omega\in\Hom(\wedge^{n-1}V,\h).
\end{equation*}
Then we have
\begin{thm}
Let $[W\stackrel{\huaT}\rightarrow\h,\rho_{\h},\rho_{W},\mu]$ be a representation of a relative Rota-Baxter Lie algebra $((\g,[\cdot,\cdot]_{\g}),(V,\rho),T)$. Then $\Xi$ is a homomorphism from the cochain complex  $(\oplus_{n=0}^{+\infty}\tilde{\frkC}^n(T;\rho_{\h},\mu),\partial)$ to $(\oplus_{n\geq 1}C^n(A,V),\dM )$. That is, we have the following commutative diagram:
\[
\small{ \xymatrix{
\cdots
\longrightarrow \Hom(\wedge^{n-1}V,\h) \ar[d]^{\Xi} \ar[r]^{\qquad\partial} & \Hom(\wedge^nV, \h) \ar[d]^{\Xi} \ar[r]  & \cdots  \\
\cdots\longrightarrow \Hom(\wedge^{n-1}V\otimes V, W) \ar[r]^{\qquad \dM} &\Hom(\wedge^nV\otimes V, W)\ar[r]& \cdots.}
}
\]
Consequently, $\Xi$ induces a homomorphism $\Xi_{*}:\huaH^n(T;\rho_{\h},\mu)\rightarrow H_{\rm preLie}^n(V,W)$ between the corresponding cohomology groups.
\end{thm}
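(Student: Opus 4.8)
The plan is to verify directly that $\Xi$ is a chain map, i.e.\ that $\Xi\circ\partial=\dM\circ\Xi$ at the level of cochains; the assertion about the induced map $\Xi_*$ on cohomology is then automatic. Fix $\omega\in\Hom(\wedge^{n-1}V,\h)=\tilde{\frkC}^n(T;\rho_\h,\mu)$, so that both $\Xi(\partial\omega)$ and $\dM(\Xi\omega)$ lie in $\Hom(\wedge^{n}V\otimes V,W)$, and evaluate each on an arbitrary $(v_1,\dots,v_n,v_{n+1})$. The structural point to keep in mind is that $\Xi$ feeds the last (distinguished, non-skew) slot into $\mu$ and the remaining slots into $\omega$. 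Hence the left-hand side is simply $\mu(v_{n+1})\,(\partial\omega)(v_1,\dots,v_n)$, which, by the definition of $\partial$, expands into a $\rho_\h$-family $\sum_i(-1)^{i+1}\mu(v_{n+1})\rho_\h(Tv_i)\omega(\dots\hat{v}_i\dots)$, a $\huaT\mu$-family $-\sum_i(-1)^{i+1}\mu(v_{n+1})\huaT\mu(v_i)\omega(\dots\hat{v}_i\dots)$, and a bracket family involving $\rho(Tv_i)v_j-\rho(Tv_j)v_i$.

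For the right-hand side I would expand $\dM(\Xi\omega)(v_1,\dots,v_{n+1})$ via the pre-Lie coboundary formula, substituting $\theta=\rho_W\circ T$, $\vartheta(u)\xi=-\mu(u)\huaT(\xi)$, $v_i\cdot_T v_{n+1}=\rho(Tv_i)v_{n+1}$ and $[v_i,v_j]_C=\rho(Tv_i)v_j-\rho(Tv_j)v_i$. The careful bookkeeping here is tracking which slot each $v_i$ occupies: in the $\theta$-term and in the bracket term (IV) the last slot remains $v_{n+1}$, so $\Xi\omega$ contributes $\mu(v_{n+1})\omega(\cdots)$; whereas in the $\vartheta$-term and the $\cdot_T$-term the last slot becomes $v_i$, respectively $\rho(Tv_i)v_{n+1}$, so $\Xi\omega$ contributes $\mu(v_i)\omega(\cdots)$, respectively $\mu(\rho(Tv_i)v_{n+1})\omega(\cdots)$. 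After this substitution the $\vartheta$-family equals the $\huaT\mu$-family of the left side termwise (with matching signs, since $\vartheta$ carries the minus sign), and the bracket family (IV) equals the bracket family of $\partial$ termwise; thus these pair off with no further input.

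The one genuinely nontrivial step is matching the remaining terms: the left-hand $\rho_\h$-family must equal the sum of the right-hand $\theta$-family $\sum_i(-1)^{i+1}\rho_W(Tv_i)\mu(v_{n+1})\omega(\cdots)$ and the $\cdot_T$-family $-\sum_i(-1)^{i+1}\mu(\rho(Tv_i)v_{n+1})\omega(\cdots)$. This is precisely the representation condition \eqref{rep3}, read with $x=Tv_i$ and $u=v_{n+1}$ and rearranged as $\mu(v_{n+1})\rho_\h(Tv_i)=\rho_W(Tv_i)\mu(v_{n+1})-\mu(\rho(Tv_i)v_{n+1})$, then applied to $\omega(\dots\hat{v}_i\dots)$ and summed over $i$ with the signs $(-1)^{i+1}$. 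With this the two sides agree term by term, giving $\Xi\circ\partial=\dM\circ\Xi$, and hence the induced $\Xi_*\colon\huaH^n(T;\rho_\h,\mu)\to H^n_{\rm preLie}(V,W)$. The main obstacle is thus entirely the slot/index bookkeeping combined with the single correct invocation of \eqref{rep3}; I note in particular that \eqref{rep4} plays no role here, since the $\huaT$-terms cancel directly through the definition of $\vartheta$.
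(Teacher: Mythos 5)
Your proposal is correct and follows essentially the same route as the paper: both expand $\dM(\Xi\omega)$ and $\Xi(\partial\omega)$ on $(v_1,\dots,v_{n+1})$, match the $\huaT\mu$-families and bracket families directly, and reduce the remaining comparison of the $\rho_\h$-, $\theta$- and $\cdot_T$-families to the single identity \eqref{rep3} with $x=Tv_i$, $u=v_{n+1}$. Your observation that \eqref{rep4} is not needed is also consistent with the paper's computation.
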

\begin{proof}
Indeed, for any $\omega\in \Hom(\wedge^{n-1}V,\h),$ we have
\begin{eqnarray*}
 \dM(\Xi(\omega))(u_1, \cdots,u_n,u_{n+1})
&=&\sum_{i=1}^{n}(-1)^{i+1}\theta(u_i)\Xi(\omega)(u_1, \cdots,\hat{u_i},\cdots,u_{n+1})\\
&&+\sum_{i=1}^{n}(-1)^{i+1}\vartheta(u_{n+1})\Xi(\omega)(u_1, \cdots,\hat{u_i},\cdots,u_n,u_i)\\
&&-\sum_{i=1}^{n}(-1)^{i+1}\Xi(\omega)(u_1, \cdots,\hat{u_i},\cdots,u_n,u_i\cdot_T u_{n+1})\\
&&+\sum_{1\leq i<j\leq n}(-1)^{i+j}\Xi(\omega)([u_i,u_j]_C,u_1,\cdots,\hat{u_i},\cdots,\hat{u_j},\cdots,u_{n+1})\\
&=&\sum_{i=1}^{n}(-1)^{i+1}\rho_{W}(Tu_i)\mu(u_{n+1})\omega(u_1, \cdots,\hat{u_i},\cdots,u_{n})\\
&&-\sum_{i=1}^{n}(-1)^{i+1}\mu(u_{n+1})\huaT\mu(u_i)\omega(u_1, \cdots,\hat{u_i},\cdots,u_n)\\
&&-\sum_{i=1}^{n}(-1)^{i+1}\mu(u_i\cdot_T u_{n+1})\omega(u_1, \cdots,\hat{u_i},\cdots,u_n)\\
&&+\sum_{1\leq i<j\leq n}(-1)^{i+j}\mu(u_{n+1})\omega([u_i,u_j]_C,u_1,\cdots,\hat{u_i},\cdots,\hat{u_j},\cdots,u_{n}),
\end{eqnarray*}
and
\begin{eqnarray*}
 \Xi(\partial\omega)(u_1,u_2,\cdots,u_n,u_{n+1})
&=&\sum_{i=1}^{n}(-1)^{i+1}\mu(u_{n+1})\rho_{\h}(T(u_i))\omega(u_1,\cdots,\hat{u_i},\cdots,u_n)\\
&&-\sum_{i=1}^{n}(-1)^{i+1}\mu(u_{n+1})\huaT\mu(u_i)\omega(u_1, \cdots,\hat{u_i},\cdots,u_n)\\
&&+\sum_{1\leq i<j\leq n}(-1)^{i+j}\mu(u_{n+1})\omega([u_i,u_j]_C,u_1,\cdots,\hat{u_i},\cdots,\hat{u_j},\cdots,u_{n}).
\end{eqnarray*}
By \eqref{rep3} and $u_{i}\cdot_T u_{n+1}=\rho(T(u_{i}))u_{n+1}$, we have
$\Xi(\partial\omega)=\dM(\Xi(\omega))$. Thus, $\Xi$ is a homomorphism between the cochain complexes, that induces a homomorphism $\Xi_*$ between the corresponding cohomologies.
\end{proof}
\section{Classification of abelian extensions of relative Rota-Baxter Lie algebras}\label{sec:a}

In this section, we classify abelian extensions of relative Rota-Baxter Lie algebras using the second cohomology group of a relative Rota-Baxter Lie algebra with coefficients in a representation.

\begin{defi}
Let $((\g,[\cdot,\cdot]_\g),(V,\rho),T)$ and $((\h,[\cdot,\cdot]_\h),(W,\nu),\huaT)$ be two relative Rota-Baxter Lie algebras. An {\bf extension} of $((\g,[\cdot,\cdot]_\g),(V,\rho),T)$ by $((\h,[\cdot,\cdot]_\h),(W,\nu),\huaT)$ is a short exact sequence of relative Rota-Baxter Lie algebra homomorphisms:
\[\begin{CD}
0@>>>W@>i>>\hat{V}@>p>>V             @>>>0\\
@.    @V \huaT VV   @V\hat{T}VV  @V T VV    @.\\
0@>>>\h @>\frak{i}>>\hat{\g}@>\frak{p}>>\g             @>>>0
,
\end{CD}\]
where $((\hat{\g},[\cdot,\cdot]_{\hat{\g}}),(\hat{V},\hat{\rho}),\hat{T})$ is a relative Rota-Baxter Lie algebra.

An extension of $((\g,[\cdot,\cdot]_{\g}),(V,\rho),T)$ by $((\h,[\cdot,\cdot]_{\h}),(W,\nu),\huaT)$ is called {\bf abelian} if $\h$ is an abelian Lie algebra, and $\nu$ is a trivial representation, i.e. $\nu(\alpha)\xi=0$, for all $\alpha\in\h, \xi\in W$.
\end{defi}
\begin{defi}
A {\bf section} of an extension $((\hat{\g},[\cdot,\cdot]_{\hat{\g}}),(\hat{V},\hat{\rho}),\hat{T})$ of a relative Rota Baxter Lie algebra $((\g,[\cdot,\cdot]_{\g}),(V,\rho),T)$ by $((\h,[\cdot,\cdot]_{\h}),(W,\nu),\huaT)$ consists of linear maps $\frak{s}:\g\rightarrow\hat{\g}$ and $ s:V\rightarrow\hat{V}$ such that
\begin{equation*}
\frak{p}\circ\frak{s}=\Id,\quad p\circ s=\Id.
\end{equation*}
\end{defi}
In the sequel, we only consider abelian extensions. Let $(\frak{s}, s)$ be a section. Define linear maps $\rho_{\h}:\g\rightarrow\gl(\h),~\rho_{W}:\g\rightarrow\gl(W),$ and $\mu: V\rightarrow\Hom(\h,W)$ by
\begin{eqnarray*}
\rho_{\h}(x)\alpha&=&[\frak{s}(x),\alpha]_{\hat{\g}},\\
\rho_{W}(x)\xi&=&\hat{\rho}(\frak{s}(x))\xi,\\
\mu(u)\alpha&=&-\hat{\rho}(\alpha)s(u), \quad \forall x\in\g, \xi\in W, u\in V, \alpha\in\h.
\end{eqnarray*}
Then we have the following result.
\begin{pro}\label{without section}
With the above notations, the triple $(\rho_\h,\rho_W,\mu)$ is a representation of the relative Rota-Baxter Lie algebra $((\g,[\cdot,\cdot]_\g),(V,\rho),T)$ on the $2$-term complex of vector spaces $W\stackrel{\huaT}\rightarrow\h$.
Moreover, this representation is independent on the choice of sections.
\end{pro}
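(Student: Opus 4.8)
The plan is to transport the whole computation into the total relative Rota-Baxter Lie algebra $((\hat{\g},[\cdot,\cdot]_{\hat{\g}}),(\hat{V},\hat{\rho}),\hat{T})$ and to extract everything from two consequences of abelianness. Identifying $\h$ and $W$ with their images $\frak{i}(\h)\subseteq\hat{\g}$ and $i(W)\subseteq\hat{V}$, the hypothesis that $\h$ is abelian gives $[\alpha,\beta]_{\hat{\g}}=0$ for $\alpha,\beta\in\h$, while the homomorphism property of the inclusion $(\frak{i},i)$ together with triviality of $\nu$ gives $\hat{\rho}(\frak{i}(\alpha))(i(\xi))=\i(\nu(\alpha)\xi)=0$, i.e. $\hat{\rho}(\alpha)\xi=0$ for $\alpha\in\h,\xi\in W$. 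First I would record well-definedness of the three maps by applying $\frak{p}$ or $p$ and using that $(\frak{p},p)$ is a homomorphism with $\frak{p}\circ\frak{i}=0$, $p\circ i=0$; for instance $p(\hat{\rho}(\alpha)s(u))=\rho(\frak{p}(\alpha))(p(s(u)))=0$, so $\mu(u)\alpha=-\hat{\rho}(\alpha)s(u)\in W$, and similarly $\rho_\h(x)\alpha\in\h$, $\rho_W(x)\xi\in W$.

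Next I would verify that $\rho_\h$ and $\rho_W$ are representations of $\g$. The one observation needed is that $[\frak{s}(x),\frak{s}(y)]_{\hat{\g}}-\frak{s}([x,y]_\g)\in\ker\frak{p}=\h$. For $\rho_\h$ one expands $[\rho_\h(x),\rho_\h(y)]\alpha$ by the Jacobi identity in $\hat{\g}$ to obtain $[[\frak{s}(x),\frak{s}(y)]_{\hat{\g}},\alpha]_{\hat{\g}}$, replaces $[\frak{s}(x),\frak{s}(y)]_{\hat{\g}}$ by $\frak{s}([x,y]_\g)$ plus an element of $\h$, and discards the resulting bracket of two elements of $\h$ by abelianness, leaving $\rho_\h([x,y]_\g)\alpha$. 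For $\rho_W$ the same replacement is inserted inside $\hat{\rho}$, and the extra term $\hat{\rho}(\h)\xi$ vanishes by triviality.

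For \eqref{rep3} I would evaluate both sides on $\alpha\in\h$: writing $\hat{\rho}(\frak{s}(x))s(u)-s(\rho(x)u)\in W$ and using $\hat{\rho}(\alpha)|_W=0$, the left side reduces to $-\hat{\rho}(\alpha)\hat{\rho}(\frak{s}(x))s(u)$, while the right side, after substituting $\hat{\rho}([\frak{s}(x),\alpha]_{\hat{\g}})=[\hat{\rho}(\frak{s}(x)),\hat{\rho}(\alpha)]$, telescopes to the same expression. The crux, and the step I expect to be the main obstacle, is \eqref{rep4}: here I would apply the relative Rota-Baxter identity for $\hat{T}$ to the pair $(s(u),i(\xi))$. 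The commuting squares of the extension give $\hat{T}(i(\xi))=\frak{i}(\huaT(\xi))=\huaT(\xi)\in\h$ (so $\hat{T}$ restricts to $\huaT$ on $W$) and $\hat{T}(s(u))=\frak{s}(T(u))+\gamma$ with $\gamma\in\h$. Substituting into $[\hat{T}(s(u)),\hat{T}(i(\xi))]_{\hat{\g}}=\hat{T}\big(\hat{\rho}(\hat{T}(s(u)))i(\xi)-\hat{\rho}(\hat{T}(i(\xi)))s(u)\big)$, every occurrence of $\gamma$ is annihilated, on the left by abelianness of $\h$ and on the right by triviality of $\hat{\rho}(\h)$ on $W$; since $\hat{T}$ sends the bracketed element of $W$ to it via $\huaT$, the identity collapses exactly to \eqref{rep4}. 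The delicate part is the book-keeping of which intermediate terms land in $\h$ versus in $W$.

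Finally, for independence of the section, a second section $(\frak{s}',s')$ differs from $(\frak{s},s)$ by maps $\frak{s}'-\frak{s}:\g\to\ker\frak{p}=\h$ and $s'-s:V\to\ker p=W$. Each of the three defining formulas then changes by a term lying in either $[\h,\h]_{\hat{\g}}$ or $\hat{\rho}(\h)W$, both of which vanish by abelianness and triviality; hence $\rho_\h,\rho_W,\mu$ do not depend on the chosen section.
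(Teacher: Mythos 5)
Your proposal is correct and follows essentially the same route as the paper's proof: transport everything into the ambient relative Rota-Baxter Lie algebra, use that $\ker\frak{p}=\h$ and $\ker p=W$ to control the error terms $[\frak{s}(x),\frak{s}(y)]_{\hat{\g}}-\frak{s}([x,y]_\g)$, $\hat{\rho}(\frak{s}(x))s(u)-s(\rho(x)u)$ and $\hat{T}(s(u))-\frak{s}(T(u))$, and kill them via abelianness of $\h$ and triviality of $\nu$; the verification of \eqref{rep4} via the relative Rota-Baxter identity for $\hat{T}$ applied to $s(u)$ and $i(\xi)$ is exactly the paper's argument. The only addition is your explicit well-definedness check of the three maps, which the paper leaves implicit.
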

\begin{proof}
Since $\hat{\rho}(\alpha)\xi=\nu(\alpha)\xi=0$, for all $\alpha\in\h, \xi\in W$, we have $\hat{\rho}(\frak{s}([x,y]_\g)-[\frak{s}(x),\frak{s}(y)]_{\hat{\g}})\xi=0$. Moreover, by the fact that $\hat{\rho}$ is a representation of the Lie algebra $\hat{\g}$ on the vector space $\hat{V}$, we have
\begin{eqnarray*}
\nonumber\rho_{W}([x,y]_\g)\xi&=&\hat{\rho}(\frak{s}([x,y]_\g))\xi\\
\nonumber&=&\hat{\rho}([\frak{s}(x),\frak{s}(y)]_{\hat{\g}})\xi+\hat{\rho}(\frak{s}([x,y]_\g)-[\frak{s}(x),\frak{s}(y)]_{\hat{\g}})\xi\\
\nonumber&=&[\hat{\rho}(\frak{s}(x)),\hat{\rho}(\frak{s}(y))]\xi\\
\label{rep1}&=&[\rho_{W}(x),\rho_{W}(y)]\xi.
\end{eqnarray*}
Thus $\rho_{W}$ is a representation of the Lie algebra $(\g, [\cdot,\cdot]_\g)$ on the vector space $W$.

Since $\h$ is abelian, we have
\begin{eqnarray*}
\nonumber\rho_\h([x,y]_\g)\alpha&=&[\frak{s}([x,y]_\g),\alpha]_{\hat{\g}}\\
\nonumber&=&[[\frak{s}(x),\frak{s}(y)]_{\hat{\g}}+\frak{s}([x,y]_\g)-[\frak{s}(x),\frak{s}(y)]_{\hat{\g}},\alpha]_{\hat{\g}}\\
\nonumber&=&[[\frak{s}(x),\frak{s}(y)]_{\hat{\g}},\alpha]_{\hat{\g}}\\
\nonumber&=&[[\frak{s}(x),\alpha]_{\hat{\g}},\frak{s}(y)]_{\hat{\g}}+[\frak{s}(x),[\frak{s}(y),\alpha]_{\hat{\g}}]_{\hat{\g}}\\
\label{rep2}&=&[\rho_\h(x),\rho_\h(y)]\alpha.
\end{eqnarray*}
Thus $\rho_\h$ is a representation of the Lie algebra $(\g, [\cdot,\cdot]_\g)$ on the vector space $\h$.

Furthermore, we have
\begin{eqnarray*}
\rho_{W}(x)\mu(u)\alpha-\mu(u)\rho_{\h}(x)\alpha&=&-\hat{\rho}(\frak{s}(x))\hat{\rho}(\alpha)s(u)+\hat{\rho}([\frak{s}(x),\alpha]_{\hat{\g}})s(u)\\
&=&-\hat{\rho}(\alpha)\hat{\rho}(\frak{s}(x))s(u).
\end{eqnarray*}
Since $(\frak{p},p)$ is a homomorphism from $((\hat{\g}, [\cdot,\cdot]_{\hat{\g}}), (\hat{V}, \hat{\rho}), \hat{T})$ to $((\g, [\cdot,\cdot]_\g), (V, \rho), T)$, we have
\begin{equation*}
\hat{\rho}(\frak{s}(x))s(u)-s(\rho(x)u)\in W.
\end{equation*}
Thus we have
\begin{eqnarray*}\label{Representation33}
\mu(\rho(x)u)\alpha&=&-\hat{\rho}(\alpha)s(\rho(x)u)\\
&=&-\hat{\rho}(\alpha)\hat{\rho}(\frak{s}(x))s(u)+\hat{\rho}(\alpha)(\hat{\rho}(\frak{s}(x))s(u)-s(\rho(x)u))\\
&=&-\hat{\rho}(\alpha)\hat{\rho}(\frak{s}(x))s(u),
\end{eqnarray*}
which implies that
\begin{equation}\label{repre4}
\mu(\rho(x)u)=\rho_{W}(x)\mu(u)-\mu(u)\rho_{\h}(x).
\end{equation}

 By $\frak{p}(\frak{s}T(u)-\hat{T}(s(u)))=T(u)-T(p(s(u)))=0,$ we have $(\frak{s}T(u)-\hat{T}(s(u)))\in\h.$ Since $((\hat{\g},[\cdot,\cdot]_{\hat{\g}}),(\hat{V},\hat{\rho}),\hat{T})$ is a relative Rota-Baxter Lie algebra, and $\h$ is an abelian Lie algebra, we have
\begin{eqnarray*}
[\hat{T}(s(u)),\huaT(\xi)]_{\hat{\g}}&=&\hat{T}(\hat{\rho}(\hat{T}(s(u)))\xi)-\hat{T}(\hat{\rho}(\huaT(\xi))s(u))\\
&=&\hat{T}((\hat{\rho}(\frak{s}(T(u)))-\hat{\rho}(\frak{s}T(u)+\hat{T}(s(u)))\xi)-\hat{T}((\hat{\rho}(\huaT(\xi))s(u))\\
&=&\hat{T}(\hat{\rho}(\frak{s}(T(u)))\xi)-\hat{T}(\hat{\rho}(\huaT(\xi))s(u)),
\end{eqnarray*}
and
\begin{equation*}
[\frak{s}T(u),\huaT(\xi)]_{\hat{\g}}=[\hat{T}(s(u))+\frak{s}(T(u))-\hat{T}(s(u)),\hat{T}(\xi)]_{\hat{\g}}=[\hat{T}(s(u)),\hat{T}(\xi)]_{\hat{\g}}.
\end{equation*}
Thus
\begin{eqnarray}
\nonumber\rho_\h(T(u))\huaT(\xi)&=&[\frak{s}(T(u)), \huaT(\xi)]_{\hat{\g}}\\
\nonumber&=&[\hat{T}(s(u)),\hat{T}(\xi)]_{\hat{\g}}\\
\nonumber&=&\hat{T}(\hat{\rho}(\frak{s}(T(u)))\xi)-\hat{T}((\hat{\rho}(\huaT(\xi))s(u))\\
\label{Representationrep3}
&=&\huaT(\rho_{W}(T(u))\xi)+\huaT(\mu(u)\huaT(\xi)).
\end{eqnarray}
 By  (\ref{repre4}) and \eqref{Representationrep3}, $(\rho_\h,\rho_W,\mu)$ is a representation of  $((\g,[\cdot,\cdot]_{\g}),(V,\rho),T)$ on $W\stackrel{\huaT}\rightarrow\h$.

Let $(\frak{s}',s')$ be another section, and  $(\rho'_\h,\rho'_W,\mu')$ be the corresponding representation of the relative Rota-Baxter Lie algebra $((\g,[\cdot,\cdot]_{\g}),(V,\rho),T)$ on $W\stackrel{\huaT}\rightarrow\h$. Since $\frak{s}'(x)-\frak{s}(x)\in\h$ and $\h$ is abelian, we have
\begin{equation*}
(\rho'_\h(x)-\rho_\h(x))\alpha= [\frak{s}'(x),\alpha]_{\hat{\g}}-[\frak{s}(x),\alpha]_{\hat{\g}}=[\frak{s}'(x)-\frak{s}(x),\alpha]_{\hat{\g}}=0.
\end{equation*}
Thus, we have $\rho'_\h(x)=\rho_\h(x).$ Similarly, we have $ \rho'_W(x)=\rho_W(x).$ By $s'(u)-s(u)\in W,$ we have $\mu'(u)=\mu(u)$. Thus the representation $(\rho_\h, \rho_W, \mu)$ is independent on the choice of sections.
\end{proof}
Let $(\frak{s}, s)$ be a section. We further define
$
\omega\in\Hom(\wedge^{2}\g,\h),~ \varpi\in\Hom(\g\otimes V,W),~ \chi\in\Hom(V,\h)
$
by
\begin{eqnarray*}
\omega(x,y)&=&[\frak{s}(x),\frak{s}(y)]_{\hat{\g}}-\frak{s}([x,y]_\g),\\
\varpi(x,u)&=&\hat{\rho}(\frak{s}(x))s(u)-s(\rho(x)u),\\
\chi(u)&=&\hat{T}(s(u))-\frak{s}(T(u)).
\end{eqnarray*}
Define $\huaS:\g\oplus \h\lon\hat{\g}$ and $S:V\oplus W\lon\hat{V}$ by
$$
\huaS(x+\alpha)=\frks(x)+\alpha,\quad S(u+\xi)=s(u)+\xi.
$$
It is obvious that $\huaS$ and $S$ are isomorphisms between vector spaces. Transfer the relative Rota-Baxter Lie algebra structure on $\hat{\g}$ and $\hat{V}$ to $\g\oplus\h$ and $V\oplus W$ via the isomorphisms $\huaS$ and $S$. We obtain a relative Rota-Baxter Lie algebra $((\g\oplus\h, [\cdot,\cdot]_\omega), (V\oplus W, \varrho), \frak{T})$, where $[\cdot,\cdot]_\omega, \varrho,$ and $\frak{T}$ are given by
\begin{eqnarray*}
[x_{1}+\alpha_{1},x_{2}+\alpha_{2}]_\omega&=&\huaS^{-1}[\huaS(x_{1}+\alpha_{1}),\huaS(x_{2}+\alpha_{2})]_{\hat{\g}}\\&=&[x_{1},x_{2}]_\g+[\frak{s}(x_{1}),\alpha_{2}]_{\hat{\g}}-[\frak{s}(x_{2}),\alpha_{1}]_{\hat{\g}}+[\frak{s}(x),\frak{s}(y)]_{\hat{\g}}-\frak{s}([x,y]_\g)\\
&=&[x_{1},x_{2}]_\g+\rho_{\h}(x_{1})\alpha_{2}-\rho_{\h}(x_2)\alpha_{1}+[\frak{s}(x),\frak{s}(y)]_{\hat{\g}}-\frak{s}([x,y]_\g)\\
&=&[x_{1},x_{2}]_\g+\rho_{\h}(x_{1})\alpha_{2}-\rho_{\h}(x_2)\alpha_{1}+\omega(x,y),\\
\varrho(x+\alpha)(u+\xi)&=&S^{-1}\hat{\rho}(\huaS(x+\alpha))S(u+\xi)\\&=&\rho(x)u+\hat{\rho}(\frak{s}(x))s(u)-s(\rho(x)u)+\hat{\rho}(\frak{s}(x))\xi+\hat{\rho}(\alpha)s(u)\\
&=&\rho(x)u+\rho_{W}(x)\xi-\mu(u)\alpha+\hat{\rho}(\frak{s}(x))s(u)-s(\rho(x)u)\\
&=&\rho(x)u+\rho_{W}(x)\xi-\mu(u)\alpha+\varpi(x,u),\\
\frak{T}(u+\xi)&=&\huaS^{-1}\hat{T}S(u+\xi)\\&=&T(u)+\huaT(\xi)+\hat{T}(s(u))-\frak{s}(T(u))\\
&=&T(u)+\huaT(\xi)+\chi(u).
\end{eqnarray*}

\begin{thm}\label{thm:cohomologicalclass}
  With the above notations, $(\omega,\varpi,\chi)$ is a $2$-cocycle of the relative Rota-Baxter Lie algebra $((\g,[\cdot,\cdot]_\g),(V;\rho),T)$  with coefficients in  $[W\stackrel{\huaT}{\lon}\h, \rho_\h, \rho_W, \mu]$. Moreover, its cohomological class does not depend on the choice of sections.
\end{thm}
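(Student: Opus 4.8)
The plan is to read off the cocycle condition from the transported relative Rota-Baxter Lie algebra structure built just before the statement. Since $\huaS\colon\g\oplus\h\to\hat{\g}$ and $S\colon V\oplus W\to\hat{V}$ are vector-space isomorphisms through which the bracket, representation and operator of $((\hat{\g},[\cdot,\cdot]_{\hat{\g}}),(\hat{V},\hat{\rho}),\hat{T})$ have been pulled back, the triple $((\g\oplus\h,[\cdot,\cdot]_\omega),(V\oplus W,\varrho),\frak{T})$ is automatically a relative Rota-Baxter Lie algebra. Hence its three defining axioms --- the Jacobi identity for $[\cdot,\cdot]_\omega$, the representation property of $\varrho$, and the relative Rota-Baxter identity for $\frak{T}$ --- hold identically. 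The whole proof of $\huaD_R(\omega,\varpi,\chi)=0$ then consists in substituting the explicit formulas for $[\cdot,\cdot]_\omega$, $\varrho$ and $\frak{T}$ into these three axioms, evaluating on arguments drawn from $\g$ and $V$ (i.e.\ with trivial $\h$- and $W$-components), and identifying the resulting $\h$- and $W$-valued equations with the three components of $\huaD_R$.

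Concretely, I would first expand the Jacobi identity of $[\cdot,\cdot]_\omega$ on $x_1,x_2,x_3\in\g$: the $\g$-component reproduces the Jacobi identity of $\g$, while the $\h$-component collects exactly the terms $\rho_\h(x_i)\omega(x_j,x_k)$ and $\omega([x_i,x_j]_\g,x_k)$, i.e.\ it is $\dM_{\CE}\omega=0$, which is $(\delta(\omega,\varpi))_\h=0$. Next I would write out $\varrho([X,Y]_\omega)=\varrho(X)\varrho(Y)-\varrho(Y)\varrho(X)$ for $X=x_1,Y=x_2\in\g$ applied to $v\in V$; the $V$-component recovers that $\rho$ is a representation, and the $W$-component is precisely $(\delta(\omega,\varpi))_W(x_1,x_2,v)=0$, the surviving terms being exactly $\rho_W(x_i)\varpi(x_j,v)$, $\varpi(x_i,\rho(x_j)v)$, $\varpi([x_1,x_2]_\g,v)$ and $\mu(v)\omega(x_1,x_2)$.

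The main computation is the relative Rota-Baxter identity $[\frak{T}(U_1),\frak{T}(U_2)]_\omega=\frak{T}\big(\varrho(\frak{T}(U_1))U_2-\varrho(\frak{T}(U_2))U_1\big)$ on $U_i=v_i\in V$. Using $\frak{T}(v_i)=T(v_i)+\chi(v_i)$, its $\g$-component is the relative Rota-Baxter identity for $T$, while its $\h$-component, after substituting $\varrho$ and collecting terms, is exactly $\partial\chi(v_1,v_2)+h_T(\omega,\varpi)(v_1,v_2)=0$. This is the step where the bookkeeping is heaviest; if one prefers to test the axiom on general $U_i=v_i+\xi_i$ rather than on $V$, the extra terms $\rho_\h(Tv_i)\huaT(\xi_j)$ cancel against $\huaT\rho_W(Tv_i)\xi_j+\huaT\mu(v_i)\huaT(\xi_j)$ by the compatibility \eqref{rep4}, and the $\xi$-variables drop out, leaving the same identity. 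Together the three computations give $\huaD_R(\omega,\varpi,\chi)=0$.

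For independence of the section, I would take a second section $(\frak{s}',s')$ and set $f_\h:=\frak{s}'-\frak{s}$ and $f_W:=s'-s$, which land in $\h$ and $W$ respectively because $\frak{p}\circ(\frak{s}'-\frak{s})=0$ and $p\circ(s'-s)=0$, so $(f_\h,f_W)\in\frkC^1(\g,\rho,T;\rho_\h,\rho_W,\mu)$. Writing $\frak{s}'=\frak{s}+f_\h$, $s'=s+f_W$ and using that $\h$ is abelian (so $[f_\h(x),f_\h(y)]_{\hat{\g}}=0$ and $\hat{\rho}(f_\h(x))f_W(u)=0$) together with $\hat{T}|_W=\huaT$, a direct substitution into the defining formulas for $\omega',\varpi',\chi'$ yields
\begin{eqnarray*}
(\omega'-\omega)(x,y)&=&(\dM_{\CE}f_\h)(x,y),\\
(\varpi'-\varpi)(x,v)&=&-\mu(v)f_\h(x)+\rho_W(x)f_W(v)-f_W(\rho(x)v),\\
(\chi'-\chi)(u)&=&-f_\h(Tu)+\huaT f_W(u),
\end{eqnarray*}
which are exactly the three components of $\huaD_R(f_\h,f_W)$ recorded just before \eqref{eq:11}. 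Hence $(\omega',\varpi',\chi')-(\omega,\varpi,\chi)=\huaD_R(f_\h,f_W)$ is a coboundary, so the class $[(\omega,\varpi,\chi)]\in\huaH^2(\g,\rho,T;\rho_\h,\rho_W,\mu)$ is independent of the chosen section.
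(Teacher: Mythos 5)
Your proposal is correct and follows essentially the same route as the paper: both read off $\dM_{\CE}\omega=0$, $\delta(\omega,\varpi)=0$ and $\partial\chi+h_T(\omega,\varpi)=0$ from the Jacobi identity, the representation property of $\varrho$, and the relative Rota-Baxter identity for $\frak{T}$ of the transported structure on $\g\oplus\h$ and $V\oplus W$, and both establish section-independence by writing $\frak{s}'=\frak{s}+N$, $s'=s+S$ and checking directly that the difference of cocycles is $\huaD_R(N,S)$. Your restriction of the test arguments to $\g$ and $V$ (with the observation that the extra $\h$- and $W$-components cancel via \eqref{rep4}) is only a cosmetic streamlining of the paper's computation on general elements.
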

\begin{proof}
 First by the fact that $[\cdot,\cdot]_{\omega}$ satisfies the Jacobi identity, we deduce that $\omega$ is $2$-cocycle of the Lie algebra $(\g,[\cdot,\cdot]_\g)$ with coefficients in $(\h,\rho_\h)$, i.e. $\dM_{\CE}\omega=0$.

 Since $\varrho$ is a representation of the Lie algebra  $(\g\oplus\h,[\cdot,\cdot]_{\omega})$ on $V\oplus W$, we obtain
 \begin{eqnarray*}
 0&=&\varrho([x+\alpha,y+\beta]_{\omega})(u+\xi)-[ \varrho(x+\alpha), \varrho(y+\beta)](u+\xi)\\
 &=&\varrho([x,y]_\g+\rho_{\h}(x)\beta-\rho_{\h}(y)\alpha+\omega(x,y))(u+\xi)\\
 &&-\varrho(x+\alpha)\varrho(y+\beta)(u+\xi)+\varrho(y+\beta)\varrho(x+\alpha)(u+\xi)\\
 &=&\rho([x,y]_\g)u+\rho_{W}([x,y]_\g)\xi-\mu(u)\rho_{\h}(x)\beta+\mu(u)\rho_{\h}(y)\alpha-\mu(u)\omega(x,y)+\varpi([x,y]_\g,u)\\
 &&-\Big(\rho(x)\rho(y)u+\rho_{W}(x)\rho_{W}(y)\xi-\rho_{W}(x)\mu(u)\beta+\rho_{W}(x)\varpi(y,u)-\mu(\rho(y)u)\alpha+\varpi(x,\rho(y)u)\\
 &&-\rho(y)\rho(x)u-\rho_{W}(y)\rho_{W}(x)\xi+\rho_{W}(y)\mu(u)\alpha-\rho_{W}(y)\varpi(x,u)+\mu(\rho(x)u)\beta-\varpi(y,\rho(x)u)\Big)\\
 &=&\varpi([x,y]_\g,u)-\mu(u)\omega(x,y)-\rho_{W}(x)\varpi(y,u)-\varpi(x,\rho(y)u)+\rho_{W}(y)\varpi(x,u)+\varpi(y,\rho(x)u),
 \end{eqnarray*}
which implies that $\delta(\omega,\varpi)=0.$

 We have
 \begin{eqnarray*}
 [\frak{T}(u+\xi),\frak{T}(v+\eta)]_\omega
 &=&[T(u),T(v)]_\g+\rho_{\h}(T(u))\huaT(\eta)+\rho_{\h}(T(u))\chi(v)-\rho_{\h}(T(v))\huaT(\xi)\\
 &&-\rho_{\h}(T(v))\chi(u)+\omega(T(u),T(v)),
 \end{eqnarray*}
and
\begin{eqnarray*}
&&\frak{T}(\varrho(\frak{T}(u+\xi))(v+\eta))-\frak{T}(\varrho(\frak{T}(v+\eta))(u+\xi))\\
&=&T(\rho(T(u))v)+\huaT(\rho_{W}(T(u))\eta)-\huaT(\mu(v)\huaT(\xi))-\huaT(\mu(v)\chi(u))+\huaT(\varpi(T(u),v))+\chi(\rho(T(u))v)\\
&&-T(\rho(T(v))u)-\huaT(\rho_{W}(T(v))\xi)+\huaT(\mu(u)\huaT(\eta))+\huaT(\mu(u)\chi(v))-\huaT(\varpi(T(v),u))-\chi(\rho(T(v))u).
\end{eqnarray*}
By the fact that $\frak{T}$ is a relative Rota-Baxter operator and
\begin{equation*}
\rho_{\h}(T(v))\huaT(\xi)=\huaT(\rho_{W}(T(v))\xi)+\huaT(\mu(v)\huaT(\xi)),
\end{equation*}
we have
\begin{eqnarray*}
0&=&\rho_\h(T(u))\chi(v)-\rho_\h(T(v))\chi(u)-\huaT(\mu(u)\chi(v))+\huaT(\mu(v)\chi(u))-\chi(\rho(T(u))v)\\
&&+\chi(\rho(T(v))u)+\omega(T(u),T(v))+\huaT(\varpi(T(v),u))-\huaT(\varpi(T(u),v))\\
&=&\partial\chi(u,v)+h_T(\omega,\varpi)(u,v),
\end{eqnarray*}
which implies that $ h_T(\omega,\varpi)+\partial\chi=0$. Therefore, $\huaD_R(\omega,\varpi,\chi)=0,$ i.e. $(\omega,\varpi,\chi)$ is a 2-cocycle.

Let $(\frak{s}',s')$ be another section and $(\omega',\varpi',\chi')$ be the associated  2-cocycle. Assume that $\frak{s}'=\frak{s}+N$ and $s'=s+S$ for $N\in\Hom(\g,\h)$ and $S\in\Hom(V,W)$. Then we have
\begin{eqnarray*}
  (\omega'-\omega)(x,y)&=&[\frak{s}'(x),\frak{s}'(y)]_{\hat{\g}}-\frak{s}'[x,y]_\g-[\frak{s}(x),\frak{s}(y)]_{\hat{\g}}+\frak{s}[x,y]_\g\\
  &=&-N([x,y]_\g)=\dM_{\CE} N(x,y),\\
  (\varpi'-\varpi)(x,u)&=&\hat{\rho}(\frak{s}'(x))s'(u)-s'(\rho(x)u)-\hat{\rho}(\frak{s}(x))s(u)+s(\rho(x)u)\\
  &=&\rho_{W}(x)(S(u))-\mu(u)N(x)-S(\rho(x)u),\\
  (\chi'-\chi)(u)&=&\hat{T}(s'(u))-\frak{s}'(T(u))-\hat{T}(s(u))+\frak{s}(T(u))\\
  &=&\huaT(S(u))-N(T(u)),
\end{eqnarray*}
which implies that $(\omega',\varpi',\chi')-(\omega,\varpi,\chi)=\huaD_R(N,S)$. Thus, $(\omega',\varpi',\chi')$ and $(\omega,\varpi,\chi)$ are in the same cohomology class.
\end{proof}

Isomorphisms between abelian extensions can be obviously defined as follows.
\begin{defi}
Let $((\hat{\g},[\cdot,\cdot]_{\hat{\g}}),(\hat{V},\hat{\rho}),\hat{T})$ and $((\tilde{\g},[\cdot,\cdot]_{\tilde{\g}}),(\tilde{V},\tilde{\rho}),\tilde{T})$ be two abelian extensions of a relative Rota-Baxter Lie algebra $((\g,[\cdot,\cdot]_\g),(V,\rho),T)$ by $(\h ,W ,\huaT)$. They are said to be {\bf isomorphic} if there exists an isomorphism $(\kappa,\lambda)$ of relative Rota-Baxter Lie algebras such that the following diagram commutes:
  \begin{equation*}
\xymatrix@!0{0\ar@{->} [rr]&& W \ar@{->} [rr] \ar'[d] [dd] \ar@{=} [rd] && \tilde{V}\ar'[d] [dd]\ar@ {->} [rr] \ar@{->} [rd]^{\lambda}&& V\ar@{=} [rd]\ar'[d] [dd]\ar@{->} [rr]&&0&\\
&0\ar@{->} [rr]&& W\ar@{->} [rr]\ar@{->} [dd]&&\hat{V}\ar@{->} [dd]\ar@{->} [rr]&&V\ar@ {->} [dd]\ar@{->} [rr]&&0\\
0\ar@{->} [rr]&&\h\ar'[r] [rr] \ar@{=} [rd]&&\tilde{\g}\ar@{->} [rd]^{\kappa}\ar'[r] [rr]&&\g\ar@{=} [rd] \ar'[r] [rr]&&0&\\
&0\ar@{->} [rr]&& \h\ar@{->} [rr]&&\hat{\g}\ar@{->} [rr]&&\g\ar@{->} [rr]&&0.}
\end{equation*}
\end{defi}
\begin{thm}\label{extension 2}
   For a given representation $[W\stackrel{\huaT}\rightarrow\h,\rho_{\h},\rho_{W},\mu]$ of a relative Rota-Baxter Lie algebra $((\g, [\cdot,\cdot]_\g), (V,\rho), T)$, abelian extensions of $((\g,[\cdot,\cdot]_\g),(V;\rho),T)$  by   $(\h ,W ,\huaT)$ are classified by the second cohomology group $\huaH^2(\g,\rho,T; \rho_\h, \rho_W, \mu)$.
\end{thm}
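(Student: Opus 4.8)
The plan is to set up a bijection between the set of isomorphism classes of abelian extensions of $((\g,[\cdot,\cdot]_\g),(V,\rho),T)$ by $(\h,W,\huaT)$ that induce the fixed representation $[W\stackrel{\huaT}\rightarrow\h,\rho_\h,\rho_W,\mu]$, and the second cohomology group $\huaH^2(\g,\rho,T;\rho_\h,\rho_W,\mu)$. The forward direction is essentially already available: given such an extension, a choice of section $(\frak{s},s)$ recovers the prescribed representation by Proposition \ref{without section}, and by Theorem \ref{thm:cohomologicalclass} it produces a $2$-cocycle $(\omega,\varpi,\chi)$ whose class in $\huaH^2$ is independent of the section. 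Thus I would first define a map $\Theta$ sending an abelian extension to the class $[(\omega,\varpi,\chi)]$.

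Next I would verify that $\Theta$ is constant on isomorphism classes. Given two isomorphic abelian extensions related by $(\kappa,\lambda)$, I would take a section of one and transport it along $(\kappa,\lambda)$ to a section of the other; commutativity of the defining diagram guarantees the result is again a section. Because $\kappa$ and $\lambda$ restrict to the identity on $\h$ and $W$, while $\omega,\varpi,\chi$ take values in $\h$, $W$ and $\h$, the two sections yield literally the same cochain $(\omega,\varpi,\chi)$; hence $\Theta$ descends to a well-defined map on isomorphism classes.

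The heart of the argument is the inverse map together with bijectivity. For surjectivity I would start from an arbitrary $2$-cocycle $(\omega,\varpi,\chi)$ and equip $\g\oplus\h$ and $V\oplus W$ with the twisted structure $[\cdot,\cdot]_\omega$, $\varrho$, $\frak{T}$ written out just before the theorem. The key observation is that the vanishing $\huaD_R(\omega,\varpi,\chi)=0$ decomposes, via \eqref{eq:Dexplicit}, into exactly the three axioms of a relative Rota-Baxter Lie algebra: $\dM_{\CE}\omega=0$ yields the Jacobi identity of $[\cdot,\cdot]_\omega$, the $W$-valued component of $\delta(\omega,\varpi)=0$ yields that $\varrho$ is a representation, and $h_T(\omega,\varpi)+\partial\chi=0$ yields that $\frak{T}$ is a relative Rota-Baxter operator. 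These are the very computations of Theorem \ref{thm:cohomologicalclass} read in reverse, so I would invoke them rather than repeat them. Together with the obvious inclusions and projections this produces an abelian extension, and reading off $\rho_\h,\rho_W,\mu$ from Proposition \ref{without section} confirms that it induces the given representation, so $\Theta$ is onto.

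Finally, for injectivity I would show cohomologous cocycles give isomorphic extensions. If $(\omega',\varpi',\chi')-(\omega,\varpi,\chi)=\huaD_R(N,S)$ for some $(N,S)\in\Hom(\g,\h)\oplus\Hom(V,W)$, I would define $\kappa(x+\alpha)=x+N(x)+\alpha$ on $\g\oplus\h$ and $\lambda(u+\xi)=u+S(u)+\xi$ on $V\oplus W$, and check that $(\kappa,\lambda)$ is an isomorphism of relative Rota-Baxter Lie algebras intertwining the two twisted structures. The three components of $\huaD_R(N,S)$ are precisely the differences $\omega'-\omega$, $\varpi'-\varpi$, $\chi'-\chi$ computed at the end of Theorem \ref{thm:cohomologicalclass}, and these discrepancies are exactly what force $\kappa$ and $\lambda$ to respect the brackets, the representation and the operator simultaneously. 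I expect the main obstacle to lie in this last step: confirming that a single pair $(N,S)$ reconciles all three pieces of structure at once, i.e. that the cochain identity $\huaD_R(N,S)=(\omega'-\omega,\varpi'-\varpi,\chi'-\chi)$ genuinely translates into $(\kappa,\lambda)$ being a morphism of the Lie bracket, the $\g$-representation and the relative Rota-Baxter operator together.
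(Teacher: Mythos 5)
Your proposal is correct and follows essentially the same route as the paper: the forward map via a section and Theorem \ref{thm:cohomologicalclass}, well-definedness on isomorphism classes by transporting the section along $(\kappa,\lambda)$, surjectivity by building the twisted structure $([\cdot,\cdot]_\omega,\varrho,\frak{T})$ from a cocycle, and injectivity via $\kappa(x+\alpha)=x+N(x)+\alpha$, $\lambda(u+\xi)=u+S(u)+\xi$ for a coboundary $\huaD_R(N,S)$. The final verification you flag as the main obstacle is exactly the computation the paper carries out, with the three components of $\huaD_R(N,S)$ matching the three structural compatibilities.
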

\begin{proof}
 Let $((\hat{\g},[\cdot,\cdot]_{\hat{\g}}),(\hat{V},\hat{\rho}),\hat{T})$ and $((\tilde{\g},[\cdot,\cdot]_{\tilde{\g}}),(\tilde{V},\tilde{\rho}),\tilde{T})$ be two isomorphic abelian extensions. Assume that $(\frak{s},s)$ is a section of $((\tilde{\g},[\cdot,\cdot]_{\tilde{\g}}),(\tilde{V},\tilde{\rho}),\tilde{T})$, and $(\tilde{\omega},\tilde{\varpi},\tilde{\chi})$ is the corresponding 2-cocycle.
 Define $(\frak{s}',s')$ by
 \begin{equation*}
 \frak{s}'=\kappa\circ\frak{s},\quad s'=\lambda\circ s.
 \end{equation*}
 Then, it is obvious that $(\frak{s}',s')$ is a section of $((\hat{\g},[\cdot,\cdot]_{\hat{\g}}),(\hat{V},\hat{\rho}),\hat{T})$. We denote by $(\hat{\omega},\hat{\varpi},\hat{\chi})$ the corresponding 2-cocycle. Then we have
 \begin{eqnarray*}
 \hat{\omega}(x,y)&=&[\frak{s}'(x),\frak{s}'(y)]_{\tilde{\g}}-\frak{s}'([x,y]_\g)\\
 &=&[\kappa(\frks(x)),\kappa(\frks(y))]_{\tilde{\g}}-\kappa(\frks[x,y]_\g)\\
 &=&\kappa([\frks(x), \frks(y)]_{\hat{\g}}- \frks[x,y]_\g)\\
 &=&\tilde{\omega}(x,y).
 \end{eqnarray*}
 Similarly, we have $$\tilde{\varpi}=\hat{\varpi},\quad \tilde{\chi}=\hat{\chi}.$$
 By Theorem \ref{thm:cohomologicalclass},  isomorphic abelian extensions give rise to the same cohomological class in $\huaH^2(\g,\rho,T; \rho_\h, \rho_W, \mu)$.

 For the converse part, we choose a 2-cocycle $(\omega,\varpi,\chi)$, and define a bracket on $\g\oplus\h$ by
 \begin{equation*}
 [(x,\alpha),(y,\beta)]_{\omega}=[x,y]_\g+\rho_\h(x)\beta-\rho_\h(y)\alpha+\omega(x,y),\quad \forall x,y\in\g, \alpha,\beta\in\h.
 \end{equation*}
 By $\dM_{\CE}\omega=0$, it is straightforward to deduce that $(\g\oplus\h,[\cdot,\cdot]_{\omega})$ is a Lie algebra.

 Since $\delta(\omega,\varpi)=0$, it is straightforward to deduce that
 \begin{equation*}
 \varrho(x,\alpha)(u,\xi)=\rho(x)u+\rho_W(x)\xi-\mu(u)\alpha+\varpi(x,u),\quad\forall x\in\g, \alpha\in\h, u\in V, \xi\in W.
 \end{equation*}
is a representation of $\g\oplus\h$ on $V\oplus W$.

 Define a linear map $\frak{T}:V\oplus W\rightarrow\g\oplus\h$ by
 \begin{equation*}
 \frak{T}(u,\xi)=T(u)+\huaT(\xi)+\chi(u),\quad\forall u\in V, \xi\in W.
 \end{equation*}
 Since $\partial\chi+h_T(\omega,\varpi)=0$, it is straightforward to deduce that $\frak{T}$ is a relative Rota-Baxter operator. Thus $((\g\oplus\h, [\cdot,\cdot]_\omega), (V\oplus W, \varrho), \frak{T})$ is a relative Rota-Baxter Lie algebra, which is an abelian extension of $((\g,[\cdot,\cdot]_\g), (V, \rho), T)$ by $(\h, W, \huaT)$. More precisely, we have the following commutative diagram:
 \[\begin{CD}
0@>>>W@>i>>V\oplus W@>p>>V             @>>>0\\
@.    @V \huaT VV   @V\frak{T}VV  @V T VV    @.\\
0@>>>\h @>\frak{i}>>\g\oplus\h@>\frak{p}>>\g             @>>>0
.
\end{CD}\]
We choose another 2-cocycle $(\omega',\varpi',\chi')$, such that $(\omega,\varpi,\chi)$ and $(\omega',\varpi',\chi')$ are in the same cohomology class, i.e.
\begin{equation*}
(\omega-\omega',\varpi-\varpi',\chi-\chi')=(\dM_{\CE}N, (\delta(N,S))_W, h_T(N,S)),
\end{equation*}
where $N\in\Hom(\g,\h), S\in\Hom(V,W)$. We denote the corresponding relative Rota-Baxter Lie algebra by $((\g\oplus\h,[\cdot,\cdot]_{\omega'}), (V\oplus W, \varrho'), \frak{T}')$.

Define linear maps $\kappa:\g\oplus\h\rightarrow\g\oplus\h$ and $\lambda:V\oplus W\rightarrow V\oplus W$ by
\begin{eqnarray*}
\kappa(x,\alpha)&=&(x,N(x)+\alpha),\\
\lambda(u,\xi)&=&(u,S(u)+\xi),
\end{eqnarray*}
for all $x\in\g, \alpha\in\h, u\in V, \xi\in W$. By $\omega-\omega'=\dM_{\CE}N$, we deduce that $\kappa$ is a Lie algebra isomorphism and $\lambda$ is an isomorphism of vector spaces.

Since
\begin{equation*}
\chi(u)-\chi'(u)=h_T(N,S)u=-N(T(u))+\huaT(S(u)),
\end{equation*}
we have $\frak{T}'(\lambda(u,\xi))=\kappa(\frak{T}(u,\xi))$.

Since $\varpi-\varpi'=(\delta(N,S))_W$, we deduce that
\begin{equation*}
\lambda(\varrho(x,\alpha)(u,\xi))=\varrho'(\kappa(x,\alpha))(\lambda(u,\xi)).
\end{equation*}
Moreover, it is straightforward to see that we have the following commutative diagram:
  \begin{equation*}
\xymatrix@!0{0\ar@{->} [rr]&& W \ar@{->} [rr] \ar'[d] [dd] \ar@{=} [rd] && V\oplus W\ar'[d] [dd]\ar@ {->} [rr] \ar@{->} [rd]^{\lambda}&& V\ar@{=} [rd]\ar'[d] [dd]\ar@{->} [rr]&&0&\\
&0\ar@{->} [rr]&& W\ar@{->} [rr]\ar@{->} [dd]&&V\oplus W\ar@{->} [dd]\ar@{->} [rr]&&V\ar@ {->} [dd]\ar@{->} [rr]&&0\\
0\ar@{->} [rr]&&\h\ar'[r] [rr] \ar@{=} [rd]&&\g\oplus\h\ar@{->} [rd]^{\kappa}\ar'[r] [rr]&&\g\ar@{=} [rd] \ar'[r] [rr]&&0&\\
&0\ar@{->} [rr]&& \h\ar@{->} [rr]&&\g\oplus\h\ar@{->} [rr]&&\g\ar@{->} [rr]&&0.}
\end{equation*}
Therefore, the two abelian extensions are isomorphic.
\end{proof}

\section{Classification of skeletal relative Rota-Baxter Lie 2-algebras}\label{sec:2}
In this section, first we recall the relative Rota-Baxter Lie 2-algebras introduced in \cite{Sheng} and then classify skeletal relative Rota-Baxter Lie 2-algebras using the third cohomology group of relative Rota-Baxter Lie algebras.

It is well-known that the category of Lie 2-algebras and the category of 2-term $L_\infty$-algebras are equivalent \cite{baez:2algebras}. Thus, when we say ``a Lie 2-algebra'', we mean a $2$-term $L_\infty$-algebra in the sequel.\vspace{2mm}
\begin{defi}
A Lie $2$-algebra structure on a graded vector space $\huaG=\g_0\oplus \g_1$ consists of the following data:
\begin{itemize}
\item[$\bullet$] a linear map $\g_1\stackrel{\dM}{\longrightarrow}\g_0,$

\item[$\bullet$] a skew-symmetric bilinear map $\frkl_2:\g_i\times \g_j\longrightarrow
\g_{i+j},~0\leq i+j\leq 1$,

\item[$\bullet$] a  skew-symmetric trilinear map $\frkl_3:\wedge^3 \g_0\longrightarrow
\g_1$,
   \end{itemize}
   such that for any $x_i,x,y,z\in \g_0$ and $\alpha,\beta\in \g_1$, the following equalities are satisfied:
\begin{itemize}
\item[$\rm(i)$] $\dM \frkl_2(x,\alpha)=\frkl_2(x,\dM \alpha),\quad \frkl_2(\dM \alpha,\beta)=\frkl_2(\alpha,\dM \beta),$
\item[$\rm(ii)$]$\dM \frkl_3(x,y,z)=\frkl_2(x,\frkl_2(y,z))+\frkl_2(y,\frkl_2(z,x))+\frkl_2(z,\frkl_2(x,y)),$
\item[$\rm(iii)$]$ \frkl_3(x,y,\dM \alpha)=\frkl_2(x,\frkl_2(y,\alpha))+\frkl_2(y,\frkl_2(\alpha,x))+\frkl_2(\alpha,\frkl_2(x,y)),$
\item[$\rm(iv)$]the Jacobiator identity:\begin{eqnarray*}
\sum_{i=1}^4(-1)^{i+1}\frkl_2(x_i,\frkl_3(x_1,\cdots,\widehat{x_i},\cdots,x_4))+\sum_{i<j}(-1)^{i+j}\frkl_3(\frkl_2(x_i,x_j),x_1,\cdots,\widehat{x_i},\cdots,\widehat{x_j},\cdots,x_4)=0.\end{eqnarray*}
   \end{itemize}
   \end{defi}
\begin{defi}\label{defi:Lie-2hom}
Let $\huaG=(\g_0,\g_1,\dM,\frkl_2,\frkl_3)$ and $\huaG'=(\g_0',\g_1',\dM',\frkl_2',\frkl_3')$ be Lie $2$-algebras. A
 homomorphism $F$ from $\huaG$ to $ \huaG'$ consists of:
 linear maps $F_0:\g_0\rightarrow \g_0',~F_1:\g_{1}\rightarrow \g_{1}'$
 and $\huaF_{2}:\g_{0}\wedge \g_0\rightarrow \g_{1}'$,
such that the following equalities hold for all $ x,y,z\in \g_{0},
\alpha\in \g_{1},$
\begin{itemize}
\item [$\rm(i)$] $F_0\circ\dM=\dM'\circ F_1$,
\item[$\rm(ii)$] $F_{0}(\frkl_2(x,y))-\frkl_2'(F_{0}(x),F_{0}(y))=\dM'\huaF_{2}(x,y),$
\item[$\rm(iii)$] $F_{1}\frkl_2(x,\alpha)-\frkl_2'(F_{0}(x),F_{1}(\alpha))=\huaF_{2}(x,\dM \alpha)$,
\item[$\rm(iv)$]
$\huaF_2(\frkl_2(x,y),z)+c.p.+F_1(\frkl_3(x,y,z))=\frkl_2'(F_0(x),\huaF_2(y,z))+c.p.+\frkl_3'(F_0(x),F_0(y),F_0(z))$.
\end{itemize}\end{defi}
Let $\huaV:V_1\stackrel{\tau}{\longrightarrow}V_0$ be a complex of
vector spaces. Define $\End^0_\tau(\huaV)$ by
$$
\End^0_\tau(\huaV)\triangleq\{(A_0,A_1)\in\gl(V_0)\oplus
\gl(V_1)|A_0\circ\tau=\tau\circ A_1\},
$$
and define $\End^1(\huaV)\triangleq \Hom(V_0,V_1)$. There is a
differential $\Delta:\End^1(\huaV)\longrightarrow \End^0_\tau(\huaV)$
given by
$$
\Delta(\phi)\triangleq(\tau\circ\phi, \phi\circ\tau)\quad\forall~\phi\in\End^1(\huaV),
$$
and a bracket operation $[\cdot,\cdot]$ given by the graded
commutator. More precisely,  for any $A=(A_0,A_1),B=(B_0,B_1)\in
\End^0_\tau(\huaV)$ and $\phi\in\End^1(\huaV)$, $[\cdot,\cdot]$ is
given by
\begin{eqnarray*}
  [A,B]=A\circ B-B\circ A=(A_0\circ B_0-B_0\circ A_0,A_1\circ B_1-B_1\circ
  A_1),
\end{eqnarray*}
and
\begin{equation}\label{representation}
  ~[A,\phi]=A\circ \phi-\phi\circ A=A_1\circ \phi-\phi\circ A_0.
\end{equation}
These two operations make $\End^1(\huaV)\xrightarrow{\Delta}
\End^0_\tau(\huaV)$ into a strict Lie 2-algebra, which we denote by $\End(\huaV)$. It plays
the same role as $\gl(V)$ for a vector space $V$.
\begin{defi}
A representation of a Lie 2-algebra $\huaG$ on $\huaV$ is a homomorphism $(\rho_0,\rho_1,\rho_2)$ from $\huaG$ to $\End(\huaV)$.
\end{defi}
\begin{ex}{\rm
Let $\huaG=(\g_0,\g_{1}, \dM, \frkl_2,\frkl_3)$ be a Lie 2-algebra. We define $\ad_0: \g_0\rightarrow\End_{\tau}^{0}(\g_1\stackrel{\dM}{\rightarrow}\g_0), ~\ad_1: \g_1\rightarrow\Hom(\g_0,\g_1)$ and $\ad_2: \wedge^2\g_0\rightarrow\Hom(\g_0,\g_1)$ by
\begin{eqnarray*}
\ad_0(x)y=\frkl_2(x,y),\quad
\ad_0(x)\alpha=\frkl_2(x,\alpha),\quad
\ad_1(\alpha)(x)=\frkl_2(\alpha,x),\quad
\ad_2(x,y)(z)=-\frkl_3(x,y,z),
\end{eqnarray*}
for all $x, y, z\in\g_0, \alpha\in\g_1$. Then $(\ad_0, \ad_1, \ad_2)$ is a representation of $\huaG$ on $\g_1\stackrel{\dM}{\rightarrow}\g_0$, which is called the {\bf adjoint representation} of the Lie 2-algebra $\huaG$.
}
\end{ex}
Let $\huaG=(\g_0,\g_{1}, \dM, \frkl_2,\frkl_3)$ be a Lie 2-algebra and $(\rho_0,\rho_1,\rho_2)$ be a representation of $\huaG$ on a 2-term complex of vector spaces $\huaV=V_1\stackrel{\tau}{\rightarrow}V_0$. The notion of a relative Rota-Baxter operator (also called an $\huaO$-operator) on a Lie $2$-algebra was given in \cite{Sheng} in the study of solutions of $2$-graded classical Yang-Baxter equation. Similar to the fact that a relative Rota-Baxter operator on a Lie algebra induces a pre-Lie algebra \cite{Bai}, a relative Rota-Baxter operator on a Lie 2-algebra   induces a pre-Lie 2-algebra.
\begin{defi}\label{defi:O-operator}
 {\rm (\cite{Sheng})} A triple $(T_0,T_1,T_2)$, where $T_0:V_0\longrightarrow \g_0,T_1:V_1\longrightarrow \g_1$  and $T_2:\wedge^2V_0\longrightarrow\g_1$ are linear maps satisfying $\dM\circ T_1=T_0\circ \tau$, is called {\bf a relative Rota-Baxter operator} on $\huaG$ with respect to the representation  $(\rho_0,\rho_1,\rho_2)$, if for all $u,v,v_i\in V_0$ and $\xi\in V_1$ the following conditions are satisfied:
  \begin{itemize}
    \item[\rm(i)] $T_0\big(\rho_0(T_0(u))v-\rho_0(T_0(v))u\big)-\frkl_2(T_0(u),T_0(v))=\dM T_2(u,v);$

    \item[\rm(ii)] $T_1\big(\rho_1(T_1(\xi))v-\rho_0(T_0(v))\xi\big)-\frkl_2(T_1(\xi),T_0(v))= T_2(\tau(\xi),v);$

    \item[\rm(iii)] $
           \frkl_2(T_0(v_1),T_2(v_2,v_3))+T_2\big(v_3,\rho_0(T_0(v_1))v_2-\rho_0(T_0(v_2))v_1\big)\\
           +T_1\big(\rho_1(T_2(v_2,v_3))v_1+\rho_2(T_0(v_2),T_0(v_3))v_1\big)+c.p.+\frkl_3(T_0(v_1),T_0(v_2),T_0(v_3))=0.
      $
    \end{itemize}

 A {\bf relative Rota-Baxter Lie 2-algebra} consists of a Lie 2-algebra $\huaG=(\g_0,\g_{1}, \dM, \frkl_2,\frkl_3)$, a representation $(\rho_0,\rho_1,\rho_2)$ of $\huaG$ on a 2-term complex of vector spaces $\huaV=V_1\stackrel{\tau}{\longrightarrow}V_0$ and a relative Rota-Baxter operator $\Theta=(T_0,T_1,T_2)$. We usually denote a relative Rota-Baxter Lie 2-algebra by $(\huaG, \huaV, \Theta)$. In particular, a relative Rota-Baxter Lie 2-algebra is said to be {\bf skeletal} if $\dM=0$ and $\tau=0$.
    \end{defi}

Let $(\huaG, \huaV, \Theta)$ be a skeletal relative Rota-Baxter Lie $2$-algebra. Then $\huaG$ is a skeletal Lie 2-algebra. Therefore, we have the following equalities
 \begin{eqnarray}
\label{Lie}0&=&\frkl_2(\frkl_2(x,y),z)+\frkl_2(\frkl_2(y,z),x)+\frkl_2(\frkl_2(z,x),y),\\
\label{Lierep}0&=&\frkl_2(\frkl_2(x,y),\alpha)+\frkl_2(\frkl_2(y,\alpha),x)+\frkl_2(\frkl_2(\alpha,x),y),
\end{eqnarray}
and
\begin{eqnarray}
\nonumber0&=&-\frkl_3(\frkl_2(x,y),z,w)-\frkl_3(\frkl_2(y,z),x,w)-\frkl_3(\frkl_2(z,w),x,y)-\frkl_3(\frkl_2(x,w),y,z)\\
\label{equation2}&&+\frkl_3(\frkl_2(y,w),x,z)+\frkl_3(\frkl_2(x,z),y,w)+\frkl_2(\frkl_3(x,y,z),w)\\
\nonumber&&-\frkl_2(\frkl_3(y,z,w),x)+\frkl_2(\frkl_3(x,z,w),y)-\frkl_2(\frkl_3(x,y,w),z),
\end{eqnarray}
for all $x, y, z, w\in\g_0, \alpha\in\g_1.$

Now we are ready to give the main result in this section.

\begin{thm}\label{theorem H3}
There is a one-to-one correspondence between skeletal relative Rota-Baxter Lie $2$-algebras and $3$-cocycles of   relative Rota-Baxter Lie algebras.
\end{thm}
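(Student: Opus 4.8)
The plan is to unwind the axioms of a skeletal relative Rota-Baxter Lie $2$-algebra $(\huaG,\huaV,\Theta)$ and reorganize them into the data of a relative Rota-Baxter Lie algebra, a representation of it, and a $3$-cocycle. Since $\tau=0$, the space $\End^0_\tau(\huaV)$ is all of $\gl(V_0)\oplus\gl(V_1)$, so I write $\rho_0(x)=(\rho_0^0(x),\rho_0^1(x))$ for the two components of $\rho_0$. First I would read off the ``degree-$0$'' relative Rota-Baxter Lie algebra $((\g_0,\frkl_2),(V_0,\rho_0^0),T_0)$: the bracket $\frkl_2$ on $\g_0$ is a Lie bracket by \eqref{Lie}; $\rho_0^0$ is a representation of $\g_0$ on $V_0$ by condition (ii) of Definition \ref{defi:Lie-2hom} applied to $(\rho_0,\rho_1,\rho_2)$, whose right-hand side $\Delta\rho_2$ vanishes as $\tau=0$; and $T_0$ is a relative Rota-Baxter operator by condition (i) of Definition \ref{defi:O-operator}, whose right-hand side $\dM T_2$ vanishes as $\dM=0$.

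Next I would extract a representation $[V_1\stackrel{T_1}{\lon}\g_1,\rho_\h,\rho_W,\mu]$ of this relative Rota-Baxter Lie algebra, by setting $\rho_\h(x)=\frkl_2(x,\cdot)|_{\g_1}$, $\rho_W=\rho_0^1$, $\mu(u)(\beta)=-\rho_1(\beta)(u)$ and $\huaT:=T_1$; note $\dM T_1=T_0\circ\tau=0$ holds automatically, so $T_1$ is a chain map to the zero differential. Here $\rho_\h$ is a representation by \eqref{Lierep} (after using skew-symmetry of $\frkl_2$), $\rho_W=\rho_0^1$ is a representation again by condition (ii) of Definition \ref{defi:Lie-2hom}, the compatibility \eqref{rep3} is exactly condition (iii) of Definition \ref{defi:Lie-2hom} (whose right-hand side $\rho_2(x,\dM\alpha)$ vanishes), and \eqref{rep4} is exactly condition (ii) of Definition \ref{defi:O-operator} (whose right-hand side $T_2(\tau\xi,v)$ vanishes, after rewriting $\frkl_2(T_1\xi,T_0v)=-\rho_\h(T_0v)(T_1\xi)$). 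The three remaining maps then assemble into a $3$-cochain: identifying $\rho_2$ with an element of $\Hom(\wedge^2\g_0\otimes V_0,V_1)$ by $\rho_2(x,y,v):=\rho_2(x,y)(v)$, the triple $(f_\h,f_W,\theta)=(-\frkl_3,\rho_2,T_2)$ lies in $\frkC^3(\g_0,\rho_0^0,T_0;\rho_\h,\rho_W,\mu)$.

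The heart of the argument is to show that the three surviving higher axioms are equivalent to $\huaD_R(-\frkl_3,\rho_2,T_2)=0$. I would verify, term by term, the following three matchings: the Jacobiator identity \eqref{equation2} is exactly $\dM_\CE\frkl_3=0$, i.e. $(\delta f)_\h=0$; condition (iv) of Definition \ref{defi:Lie-2hom} for $(\rho_0,\rho_1,\rho_2)$, after expanding the graded commutators of $\End(\huaV)$ via \eqref{representation} and using $\rho_1(\cdot)(v)=-\mu(v)(\cdot)$, rearranges (up to an overall sign) into $(\delta f)_W=0$; and condition (iii) of Definition \ref{defi:O-operator} rearranges into $\partial\theta+h_T(f)=0$. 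This sign-sensitive bookkeeping is the main obstacle: one must check that every cyclically-permuted summand in the Lie $2$-algebra axioms lands on the correct summand of the operators $\delta$, $\partial$, $h_T$ of Section \ref{sec:cohomology}, and it is precisely this matching that forces the sign $f_\h=-\frkl_3$ and fixes the normalizations of $\mu$, $f_W$ and $\theta$; the antisymmetry of $f_W$, $\theta$, $\frkl_3$ and the skew-symmetry of $\frkl_2$ are used repeatedly to convert the ``$+\,c.p.$'' sums into the alternating-sign sums appearing in $\huaD_R$.

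Finally I would run the construction backwards: given a relative Rota-Baxter Lie algebra $((\g_0,[\cdot,\cdot]_{\g_0}),(V_0,\rho),T_0)$, a representation $[V_1\stackrel{T_1}{\lon}\g_1,\rho_\h,\rho_W,\mu]$ of it, and a $3$-cocycle $(f_\h,f_W,\theta)$, I define $\dM=0$, $\tau=0$, let $\frkl_2$ be $[\cdot,\cdot]_{\g_0}$ on $\g_0$ and $\rho_\h$ on $\g_0\times\g_1$, set $\frkl_3=-f_\h$, recover $\rho_0^0=\rho$, $\rho_0^1=\rho_W$, $\rho_1(\beta)(u)=-\mu(u)(\beta)$, $\rho_2=f_W$, and take $\Theta=(T_0,T_1,\theta)$. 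Every implication in the preceding paragraphs is an equivalence, so these data satisfy the axioms of a skeletal relative Rota-Baxter Lie $2$-algebra precisely because $(f_\h,f_W,\theta)$ is a $3$-cocycle; since the two assignments are visibly inverse to one another, this yields the desired one-to-one correspondence.
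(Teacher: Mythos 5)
Your proposal follows essentially the same route as the paper's proof: extract the degree-zero relative Rota-Baxter Lie algebra $((\g_0,\frkl_2),(V_0,\rho_0),T_0)$, the representation on $V_1\stackrel{T_1}{\to}\g_1$ with $\mu(u)\alpha=-\rho_1(\alpha)u$, and package the three higher structure maps into a $3$-cochain whose cocycle condition is exactly the conjunction of the surviving axioms. The only deviation is an overall sign --- the paper takes $(f_{\g_1},f_{V_1},\theta)=(\frkl_3,-\rho_2,-T_2)$ where you take its negative --- which is immaterial since the cocycle condition is linear and negation is a bijection on cocycles, though it means the sign of $f_\h$ is not literally forced as you claim; only the relative signs are.
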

\begin{proof}
Let $(\huaG, \huaV, \Theta)$ be a skeletal relative Rota-Baxter Lie $2$-algebra. Define linear maps $\rho:\g_0\rightarrow \gl(V_0)$ and $[\cdot,\cdot]_{\g_0}: {\g_0}\wedge{\g_0}\rightarrow{\g_0}$ by
\begin{equation*}
\rho(x)u=\rho_0(x)u, \quad [x,y]_{\g_0}=\frkl_2(x,y), \quad x, y\in\g_0, u\in\g_1.
\end{equation*}
Since $(\rho_0, \rho_1, \rho_2)$ is a representation of $\huaG$ on $V_1\stackrel{0}{\rightarrow}V_0$, we have
\begin{equation}
\label{repV}\rho(\frkl_2(x,y))u=\rho(x)\rho(y)u-\rho(y)\rho(x)u.
\end{equation}
By $\rm(i)$ in Definition \ref{defi:O-operator} with $\dM=0$, we have
\begin{equation}
\label{rbLg}\frkl_{2}(T_0(u),T_0(v))=T_0(\rho(T_0(u))v)-T_0(\rho(T_0(v))u).
\end{equation}
By \eqref{Lie}, \eqref{repV} and \eqref{rbLg}, $((\g_0, [\cdot,\cdot]_{\g_0}), (V_0, \rho), T_0)$ is a relative Rota-Baxter Lie algebra.

Define linear maps $\rho_{V_1}:\g_0\rightarrow\gl(V_1)$, $\rho_{\g_1}:\g_0\rightarrow \gl(\g_1)$, and
$\mu:V_0\rightarrow\Hom(\g_1, V_1)$ by
\begin{equation*}
\rho_{V_1}(x)\xi=\rho_0(x)\xi, \quad  \rho_{\g_1}(x)\alpha=\frkl_2(x,\alpha),\quad \mu(u)\alpha=-\rho_1(\alpha)u,\quad x\in\g_0, \alpha\in\g_1, \xi\in V_1, u\in V_0.
\end{equation*}
Since $(\rho_0, \rho_1, \rho_2)$ is a representation of $\huaG$ on $V_1\stackrel{0}{\rightarrow}V_0$, we have
\begin{eqnarray}
\label{repW}\rho_{V_1}(\frkl_2(x,y))\xi&=&\rho_{V_1}(x)\rho_{V_1}(y)\xi-\rho_{V_1}(y)\rho_{V_1}(x)\xi,\\
\label{repW3}\mu(u)\rho_{\g_1}(x)\alpha&=&\rho_{V_1}(x)\mu(u)\alpha-\mu(\rho(x)u)\alpha.
\end{eqnarray}
By $\rm(ii)$ in Definition \ref{defi:O-operator} with $\tau=0$, we have
\begin{equation}
\label{repW4}\rho_{\g_1}(T_0(u))T_1(\xi)=T_1(\rho_{V_1}(T_0(u))\xi)+T_1(\mu(u)T_1(\xi)).
\end{equation}
By \eqref{Lierep}, \eqref{repW}, \eqref{repW3} and \eqref{repW4}, $[V_1\stackrel{T_1}{\rightarrow}\g_1, \rho_{\g_1}, \rho_{V_1}, \mu]$ is a representation of $((\g_0, [\cdot,\cdot]_{\g_0}), (V_0, \rho), T_0)$.

Define linear maps $f_{\g_1}:\wedge^3\g_0\rightarrow\g_1, f_{V_1}:\wedge^2\g_0\otimes V_0\rightarrow V_1$ and $ \theta:\wedge^2V_0\rightarrow\g_1$ by
\begin{equation*}
f_{\g_1}(x,y,z)=\frkl_3(x,y,z), \quad f_{V_1}(x,y,u)=-\rho_2(x,y)u, \quad  \theta(u,v)=-T_2(u,v).
\end{equation*}
Since $(\rho_0,\rho_1,\rho_2)$ is a representation  $\huaG$ on $V_1\stackrel{0}{\rightarrow}V_0$, we have
\begin{eqnarray}
0&=&f_{V_1}([x,y]_\g,z,u)-f_{V_1}([x,z]_\g,y,u)+f_{V_1}([y,z]_\g,x,u)+\mu(u)f_{\g_1}(x,y,z)\label{equation3}\\
\nonumber&&-\rho_{V_1}(x)f_{V_1}(y,z,u)+\rho_{V_1}(y)f_{V_1}(x,z,u)-\rho_{V_1}(z)f_{V_1}(x,y,u)\\
\nonumber&&+f_{V_1}(y,z,\rho(x)u)-f_{V_1}(z,x,\rho(y)u)+f_{V_1}(x,y,\rho(z)u).
\end{eqnarray}
By $\rm(iii)$ in Definition \ref{defi:O-operator}, we have
\begin{eqnarray}
\nonumber0&=&T_1(f_{V_1}(T_0(v_2),T_0(v_3),v_1))-T_1(f_{V_1}(T_0(v_1),T_0(v_3),v_2))+T_1(f_{V_1}(T_0(v_1),T_0(v_2),v_3))\\
\label{equation1}&&-f_{\g_1}(T_0(v_1),T_0(v_2),T_0(v_3))+\rho_{\g_1}(T_0(v_1))\theta(v_2,v_3)-\rho_{\g_1}(T_0(v_2))\theta(v_1,v_3)\\
\nonumber&&+\rho_{\g_1}(T_0(v_3))\theta(v_1,v_2)-T_1(\mu(v_1)\theta(v_2,v_3))+T_1(\mu(v_2)\theta(v_1,v_3))\\
\nonumber&&-T_1(\mu(v_3)\theta(v_1,v_2))-\theta(\rho(T_0(v_1))v_2,v_3)+\theta(\rho(T_0(v_2))v_1,v_3)\\
\nonumber&&+\theta(\rho(T_0(v_1))v_3,v_2)-\theta(\rho(T_0(v_3))v_1,v_2)-\theta(\rho(T_0(v_2))v_3,v_1)+\theta(\rho(T_0(v_3))v_2,v_1).
\end{eqnarray}
Thus, by \eqref{equation2}, \eqref{equation3} and \eqref{equation1}, we deduce that $((f_{\g_1},f_{V_1}),\theta)$ is a $3$-cocycle of the relative Rota-Baxter Lie algebra $((\g_0, [\cdot,\cdot]_{\g_0}), (V_0, \rho), T_0)$ with coefficients in the representation $[V_1\stackrel{T_1}{\rightarrow}\g_1, \rho_{\g_1}, \rho_{V_1}, \mu]$.

The proof of other direction is similar. So the details will be omitted.
\end{proof}

\section{Representations and cohomologies of Rota-Baxter Lie algebras}\label{sec:RB}

 In this section, we adapt the previous general framework of relative Rota-Baxter Lie algebras to give   representations and cohomologies of Rota-Baxter Lie algebras. Applications of the second and the third cohomology groups will also be given to classify abelian extensions of Rota-Baxter Lie algebras and skeletal Rota-Baxter Lie 2-algebras.

\begin{defi}
A {\bf representation of a Rota-Baxter Lie algebra} $(\g,[\cdot,\cdot]_\g, T)$ on a vector space $W$ with respect to a linear transformation $\huaT\in\gl(W)$ is a representation $\rho_W$ of the Lie algebra $\g$ on $W$, satisfying
\begin{equation*}
\rho_{W}(T(x))\circ \huaT =\huaT\circ\rho_{W}(T(x)) +\huaT\circ \rho_{W}(x)\circ \huaT, \quad \forall x\in\g.
\end{equation*}

\end{defi}

Denote a representation by $[W; \huaT, \rho_W]$.

\begin{ex}{\rm
Let $(\g,[\cdot,\cdot]_{\g},T)$  be a  Rota-Baxter Lie algebra.
Then it is straightforward to see that $[\g;T,\ad]$ is a representation, which is called the {\bf adjoint representation} of $(\g,[\cdot,\cdot]_{\g}, T)$.
}
\end{ex}

Similar to Proposition \ref{pro:dualrep}, we have the following result.

\begin{pro}
 Let $[W; \huaT, \rho_W]$ be a representation of a  Rota-Baxter Lie algebra $(\g,[\cdot,\cdot]_{\g},T)$. Then $[W^*; -\huaT^*, \rho_W^*]$ is   also a representation of   $(\g,[\cdot,\cdot]_{\g},T)$, which is called the {\bf dual representation}.
\end{pro}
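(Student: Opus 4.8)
The plan is to deduce this from the corresponding result for relative Rota-Baxter Lie algebras, Proposition~\ref{pro:dualrep}, rather than redo the computation from scratch. Recall that a Rota-Baxter Lie algebra $(\g,[\cdot,\cdot]_\g,T)$ is the same thing as the relative Rota-Baxter Lie algebra $((\g,[\cdot,\cdot]_\g),(\g,\ad),T)$. First I would check that a representation $[W;\huaT,\rho_W]$ of $(\g,[\cdot,\cdot]_\g,T)$ is precisely a representation $[W\stackrel{\huaT}{\rightarrow}W,\rho_W,\rho_W,\rho_W]$ of this relative Rota-Baxter Lie algebra on the $2$-term complex $W\stackrel{\huaT}{\rightarrow}W$, where one takes $\h=W$, $\rho_\h=\rho_W$, and $\mu=\rho_W$ (here $\mu\colon\g\to\Hom(W,W)=\gl(W)$). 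Indeed, with these choices \eqref{rep3} becomes $\rho_W([x,u]_\g)=\rho_W(x)\rho_W(u)-\rho_W(u)\rho_W(x)$, which holds because $\rho_W$ is a representation of the Lie algebra $\g$, and \eqref{rep4} becomes exactly the Rota-Baxter compatibility $\rho_W(Tu)\circ\huaT=\huaT\circ\rho_W(Tu)+\huaT\circ\rho_W(u)\circ\huaT$ defining a representation of the Rota-Baxter Lie algebra.

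Once this identification is in place, Proposition~\ref{pro:dualrep} applies verbatim: the dual of $[W\stackrel{\huaT}{\rightarrow}W,\rho_W,\rho_W,\rho_W]$ is $[W^*\stackrel{-\huaT^*}{\rightarrow}W^*,\rho_W^*,\rho_W^*,\mu^*]$. It then remains only to compute $\mu^*$; by its defining relation $\langle\mu^*(u)\epsilon,\alpha\rangle=\langle\epsilon,-\mu(u)\alpha\rangle=-\langle\epsilon,\rho_W(u)\alpha\rangle=\langle\rho_W^*(u)\epsilon,\alpha\rangle$, so $\mu^*=\rho_W^*$. Reading the resulting relative dual representation $[W^*\stackrel{-\huaT^*}{\rightarrow}W^*,\rho_W^*,\rho_W^*,\rho_W^*]$ back through the identification of the previous paragraph then yields exactly $[W^*;-\huaT^*,\rho_W^*]$, as desired.

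Alternatively, as the statement suggests (``Similar to Proposition~\ref{pro:dualrep}''), one can verify the claim directly. Since $\rho_W^*$ is automatically a representation of the Lie algebra $\g$ on $W^*$, the only thing to check is the Rota-Baxter compatibility for $-\huaT^*$, namely
\[
\rho_W^*(Tx)\circ(-\huaT^*)=(-\huaT^*)\circ\rho_W^*(Tx)+(-\huaT^*)\circ\rho_W^*(x)\circ(-\huaT^*).
\]
I would pair both sides with arbitrary $\xi\in W$ and $\epsilon\in W^*$, push everything onto the $W$-side using $\langle\rho_W^*(y)\epsilon,\xi\rangle=-\langle\epsilon,\rho_W(y)\xi\rangle$ and $\langle\huaT^*\epsilon,\xi\rangle=\langle\epsilon,\huaT\xi\rangle$, and observe that the identity obtained is exactly the transpose of the original compatibility condition on $W$ after moving the term $\huaT\circ\rho_W(Tx)$ to the other side.

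The computation here is entirely routine; the only point requiring care---and hence the main (minor) obstacle---is the bookkeeping of signs and composition order, since the dual convention $\langle\rho_W^*(y)\epsilon,\xi\rangle=-\langle\epsilon,\rho_W(y)\xi\rangle$ carries a sign and $-\huaT^*$ carries another. These must be tracked so that the two negative signs on the quadratic term $(-\huaT^*)\circ\rho_W^*(x)\circ(-\huaT^*)$ combine correctly and the rearranged original identity reproduces the dual one. The reduction to Proposition~\ref{pro:dualrep} sidesteps this bookkeeping entirely, which is why I would present that route as the main argument and relegate the direct check to a remark.
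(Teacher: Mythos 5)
Your proposal is correct. The paper offers no written proof here---it simply says the result is ``similar to Proposition \ref{pro:dualrep}'', i.e.\ it intends the direct transpose computation that you sketch as your secondary route, and that computation does close up exactly as you describe (pairing with $\xi$ and $\epsilon$ turns the claimed identity into $\huaT\circ\rho_W(Tx)=\rho_W(Tx)\circ\huaT-\huaT\circ\rho_W(x)\circ\huaT$, which is a rearrangement of the hypothesis). Your primary route---identifying $[W;\huaT,\rho_W]$ with the representation $[W\stackrel{\huaT}{\rightarrow}W,\rho_W,\rho_W,\rho_W]$ of $((\g,[\cdot,\cdot]_\g),(\g,\ad),T)$ and then invoking Proposition \ref{pro:dualrep}---is a genuinely different and arguably cleaner argument; note that the paper itself uses precisely this identification later, in the proof that $\huaD_{\RB}\circ\huaD_{\RB}=0$, so the reduction is fully consistent with the paper's own framework. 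The one small point worth making explicit when you write this up is the bookkeeping of which slot is which in the dual: in $[\h^*\stackrel{-\huaT^*}{\rightarrow}W^*,\rho_W^*,\rho_\h^*,\mu^*]$ the roles of $\h$ and $W$ are swapped, so with $\h=W$ and $\rho_\h=\mu=\rho_W$ all three structure maps of the dual collapse to $\rho_W^*$ and condition \eqref{rep4} for the dual becomes exactly the Rota-Baxter compatibility for $[W^*;-\huaT^*,\rho_W^*]$; you have tracked this correctly.
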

\begin{ex}{\rm
Let $(\g,[\cdot,\cdot]_{\g},T)$  be a  Rota-Baxter Lie algebra. Then $(\g^*;-T^*,\ad^*)$ is a representation of $(\g,[\cdot,\cdot]_{\g},T)$, which is called the {\bf coadjoint representation}.
}
\end{ex}
Similar to Proposition \ref{semiRB}, we also have the following semidirect product characterization of representations of Rota-Baxter Lie algebras.

\begin{pro}
Let $[W; \huaT, \rho_W]$ be a representation of a Rota-Baxter Lie algebra $(\g,[\cdot,\cdot]_{\g}, T)$. Then $(\g\oplus W, [\cdot,\cdot]_{\ltimes}, \frak{T})$ is a Rota-Baxter Lie algebra, where $[\cdot,\cdot]_{\ltimes}$ is the semidirect product Lie bracket given by
\begin{equation*}
[x+u,y+v]_{\ltimes}=[x,y]_\g+\rho_W(x)v-\rho_W(y)u,\quad \forall x,y\in\g, u,v\in W,
\end{equation*}
and $\frak{T}: \g\oplus W\rightarrow\g\oplus W$ is a linear map given by
\begin{equation*}
\frak{T}(x+u)=T(x)+\huaT(u),\quad \forall x\in\g, u\in W.
\end{equation*}
\end{pro}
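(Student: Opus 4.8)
The plan is to verify the two defining ingredients of a Rota-Baxter Lie algebra separately, in parallel with the proof of Proposition \ref{semiRB}: first that $(\g\oplus W,[\cdot,\cdot]_\ltimes)$ is a Lie algebra, and then that $\frak{T}$ is a Rota-Baxter operator on it. The first part is immediate and uses neither $T$ nor $\huaT$: since $\rho_W$ is a representation of the Lie algebra $\g$ on $W$, the bracket $[\cdot,\cdot]_\ltimes$ is exactly the semidirect product bracket of $\g\ltimes_{\rho_W}W$, which is a Lie algebra by the standard construction.

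For the second part I would expand both sides of the Rota-Baxter identity for $\frak{T}$ and compare componentwise. Writing $\frak{T}(x+u)=T(x)+\huaT(u)$, the left-hand side $[\frak{T}(x+u),\frak{T}(y+v)]_\ltimes$ equals $[T(x),T(y)]_\g+\rho_W(T(x))\huaT(v)-\rho_W(T(y))\huaT(u)$. On the right-hand side I first compute $[\frak{T}(x+u),y+v]_\ltimes+[x+u,\frak{T}(y+v)]_\ltimes$, whose $\g$-part is $[T(x),y]_\g+[x,T(y)]_\g$ and whose $W$-part collects the four terms $\rho_W(T(x))v$, $-\rho_W(y)\huaT(u)$, $\rho_W(x)\huaT(v)$, $-\rho_W(T(y))u$, and then I apply $\frak{T}$. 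On the $\g$-component, $T([T(x),y]_\g+[x,T(y)]_\g)=[T(x),T(y)]_\g$ by the Rota-Baxter identity for $T$, matching the left-hand side. On the $W$-component the comparison splits into the terms carrying $v$ and those carrying $u$, and each reduces precisely to the defining condition $\rho_W(T(x))\huaT=\huaT\rho_W(T(x))+\huaT\rho_W(x)\huaT$ of the representation (applied to $v$, and to $u$ with $y$ in place of $x$).

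The only substantive step is this $W$-component bookkeeping, and it is not a real obstacle: the semidirect bracket and the linear map $\frak{T}$ are arranged exactly so that the representation axiom supplies the identity needed to close the $W$-part. A conceptually cleaner alternative, which I would also note, is to deduce the statement directly from Proposition \ref{semiRB}. Indeed a Rota-Baxter Lie algebra is the relative Rota-Baxter Lie algebra $((\g,[\cdot,\cdot]_\g),(\g,\ad),T)$, and a representation $[W;\huaT,\rho_W]$ becomes the representation $[W\stackrel{\huaT}\rightarrow W,\rho_W,\rho_W,\rho_W]$ of it (taking $\h=W$, $\rho_\h=\rho_W$, $\mu=\rho_W$), where \eqref{rep3} holds because $\rho_W$ is a Lie algebra representation and \eqref{rep4} is exactly the defining condition above. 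Proposition \ref{semiRB} then produces a relative Rota-Baxter Lie algebra structure on $\g\oplus W$; the point to check is that the induced representation $\varrho$ coincides with the adjoint representation of $\g\ltimes_{\rho_W}W$, so that this relative Rota-Baxter Lie algebra is genuinely a Rota-Baxter Lie algebra with bracket $[\cdot,\cdot]_\ltimes$ and operator $\frak{T}$ as stated.
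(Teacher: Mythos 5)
Your proof is correct and is essentially the argument the paper intends: the paper states this proposition without proof, merely noting it is ``similar to Proposition \ref{semiRB}'', and your direct componentwise verification (the Lie bracket is the standard semidirect product, the $\g$-component closes by the Rota-Baxter identity for $T$, and the $W$-component closes by the representation axiom $\rho_W(T(x))\circ\huaT=\huaT\circ\rho_W(T(x))+\huaT\circ\rho_W(x)\circ\huaT$) is exactly that computation. Your alternative reduction to Proposition \ref{semiRB}, viewing $[W;\huaT,\rho_W]$ as the representation $[W\stackrel{\huaT}\rightarrow W,\rho_W,\rho_W,\rho_W]$ of $((\g,[\cdot,\cdot]_\g),(\g,\ad),T)$ and checking that the induced $\varrho$ is the adjoint representation of $\g\ltimes_{\rho_W}W$, is also sound and is the same device the paper itself employs later when constructing the cochain complex of a Rota-Baxter Lie algebra.
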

\begin{rmk}
Let $(A,T)$ be a Rota-Baxter associative algebra, i.e. $A$ is an associative algebra and $T:A\lon A$ is a linear map satisfying
 $$
 T(x)T(y)=T(T(x)y+xT(y)),\quad \forall x,y\in A.
 $$In \cite{GouLin}, the authors defined a  left  Rota-Baxter module over $(A,T)$ to be a pair $(V,\huaT)$ with a left $A$-module $V$ and a linear map $\huaT:V\rightarrow V$ satisfying
\begin{equation*}
T(x)\huaT(u)=\huaT\big(x\huaT(u)+T(x)u\big),\quad \forall x\in A, u\in V.
\end{equation*}
It is obvious that a Rota-Baxter associative algebra gives rise to a Rota-Baxter Lie algebra $(A,[\cdot,\cdot],T)$, where the Lie bracket $[\cdot,\cdot]$ is the commutator Lie bracket. Furthermore, it is straightforward to deduce that  a left Rota-Baxter module $(V,\huaT)$ gives rise to a representation $\rho$ of the Rota-Baxter Lie algebra $(A,[\cdot,\cdot],T)$ on $V$ with respect to $\huaT:V\rightarrow V$, where $\rho(x)u=xu$, for all $x\in A, u\in V$. Thus the definition of representations of Rota-Baxter Lie algebras is consistent with representations of Rota-Baxter associative algebras given in \cite{GouLin}.
\end{rmk}

Let $[W; \huaT, \rho_W]$ be a representation of a Rota-Baxter Lie algebra $(\g,[\cdot,\cdot]_\g,T)$. Define the set of 0-cochains $\frak{C}^{0}(\g,T; \rho_W)$ to be 0, and define the set of $1$-cochains $\frak{C}^{1}(\g,T; \rho_W)$ to be $\Hom(\g,W)$. For $n\geq 2$, we define the set of $n$-cochains $\frak{C}^{n}(\g,T; \rho_W)$ by
\begin{equation*}
\frak{C}^{n}(\g,T; \rho_W)=\Hom(\wedge^n\g,W)\oplus\Hom(\wedge^{n-1}\g,W).
\end{equation*}
Define the coboundary operator
\begin{equation*}
\huaD_{\RB}:\frkC^n(\g,T; \rho_W)\lon \frkC^{n+1}(\g,T; \rho_W)
\end{equation*}
by
\begin{equation*}
\huaD_{\RB}(f,\theta)=(\dM_{\CE} f,\partial\theta+h_{T}(f)),\quad \forall f\in\Hom(\wedge^{n}\g,W), \theta\in\Hom(\wedge^{n-1}\g,W),
\end{equation*}
where
 \begin{itemize}
   \item  $\dM_{\CE}: \Hom(\wedge^{n}\g, W)\rightarrow\Hom(\wedge^{n+1}\g, W)$ is the Chevalley-Eilenberg coboundary operator of the Lie algebra $\g$ with coefficients in the representation $(W,\rho_W)$.
   \item  $h_{T}: \Hom(\wedge^{n}\g, W)\rightarrow\Hom(\wedge^{n}\g, W)$ is defined by
   \begin{eqnarray*}
   &&h_{T}(f)(x_{1},\cdots,x_{n})\\
   &=&(-1)^{n}f(T(x_{1}),T(x_{2}),\cdots,T(x_{n}))\\
   &&-(-1)^{n}\sum_{i=1}^{n}\huaT( f(T(x_{1}),\cdots,T(x_{i-1}),x_i,T(x_{i+1}),\cdots,T(x_{n}))).
   \end{eqnarray*}
   \item $\partial: \Hom(\wedge^{n-1}\g, W)\rightarrow\Hom(\wedge^{n}\g, W)$ is defined by
   \begin{eqnarray*}
&&\partial\theta(x_{1},\cdots,x_{n})\\
&=&\sum_{i=1}^{n}(-1)^{i+1}\rho_W(T(x_{i}))\theta(x_{1},\cdots,\widehat{x_{i}},\cdots,x_{n})-\sum_{i=1}^{n}(-1)^{i+1}\huaT\big(\rho_W(x_{i})\theta(x_{1},\cdots,\widehat{x_{i}},\cdots,x_{n})\big)\\
&&+\sum_{1\leq i<j\leq n}(-1)^{i+j}\theta([T(x_{i}),x_{j}]_\g-[T(x_{j}),x_{i}]_\g,x_{1},\cdots,\widehat{x_{i}},\cdots,\widehat{x_{j}},\cdots,x_{n}).
\end{eqnarray*}
\end{itemize}
\begin{thm}
  With the above notations,  $(\oplus _{n=0}^{+\infty}\frkC^n(\g,T; \rho_W),\huaD_{\RB})$ is a cochain complex, i.e. $$\huaD_{\RB}\circ \huaD_{\RB}=0.$$
\end{thm}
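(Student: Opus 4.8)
The plan is to realize $(\oplus_{n=0}^{+\infty}\frkC^n(\g,T;\rho_W),\huaD_{\RB})$ as a subcomplex of the cochain complex of an associated \emph{relative} Rota-Baxter Lie algebra and then appeal to Theorem~\ref{cohomology-of-LLT}. Recall that the Rota-Baxter Lie algebra $(\g,[\cdot,\cdot]_\g,T)$ is precisely the relative Rota-Baxter Lie algebra $((\g,[\cdot,\cdot]_\g),(\g,\ad),T)$, so that $V=\g$ and $\rho=\ad$. First I would upgrade the representation $[W;\huaT,\rho_W]$ to a representation of this relative Rota-Baxter Lie algebra on the $2$-term complex $W\stackrel{\huaT}{\rightarrow}\h$ with $\h=W$, by setting $\rho_\h=\rho_W$ and $\mu=\rho_W\in\Hom(\g,\gl(W))$. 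With these substitutions equation~\eqref{rep3} becomes $\rho_W([x,u]_\g)=[\rho_W(x),\rho_W(u)]$, which holds because $\rho_W$ is a representation of $\g$, and equation~\eqref{rep4} becomes exactly the defining identity of a representation of a Rota-Baxter Lie algebra. Hence $[W\stackrel{\huaT}{\rightarrow}W,\rho_W,\rho_W,\rho_W]$ is a representation of $((\g,[\cdot,\cdot]_\g),(\g,\ad),T)$, and Theorem~\ref{cohomology-of-LLT} yields $\huaD_R\circ\huaD_R=0$ on the complex $\frkC^n(\g,\ad,T;\rho_W,\rho_W,\rho_W)=\big(\Hom(\wedge^n\g,W)\oplus\Hom(\wedge^{n-1}\g\otimes\g,W)\big)\oplus\Hom(\wedge^{n-1}\g,W)$.

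Next I would introduce the injective linear map
$$\Phi:\frkC^n(\g,T;\rho_W)\longrightarrow\frkC^n(\g,\ad,T;\rho_W,\rho_W,\rho_W),\qquad \Phi(f,\theta)=\big((f,\iota f),\theta\big),$$
where $\iota:\Hom(\wedge^n\g,W)\rightarrow\Hom(\wedge^{n-1}\g\otimes\g,W)$ is precomposition with the natural projection $\wedge^{n-1}\g\otimes\g\rightarrow\wedge^n\g$, i.e. $\iota f(x_1,\dots,x_{n-1},v)=f(x_1,\dots,x_{n-1},v)$ with the last entry placed in the $V=\g$ slot. Since $\Phi$ is clearly injective, it suffices to prove that it is a morphism of cochain complexes, $\huaD_R\circ\Phi=\Phi\circ\huaD_{\RB}$: this gives $\Phi\circ\huaD_{\RB}\circ\huaD_{\RB}=\huaD_R\circ\huaD_R\circ\Phi=0$, whence $\huaD_{\RB}\circ\huaD_{\RB}=0$.

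The identity $\huaD_R\Phi=\Phi\huaD_{\RB}$ is checked component by component. The $\Hom(\wedge^{n+1}\g,W)$-component is trivial, since the $\h$-part of $\delta$ depends only on $f_\h=f$ and equals $\dM_\CE f$. For the $\Hom(\wedge^n\g,W)$-component one first notes that the operator $\partial$ of the relative complex, after the substitutions $\rho_\h=\rho_W$, $\mu=\rho_W$, $\rho=\ad$, coincides verbatim with the Rota-Baxter $\partial$, so the $\partial\theta$ contributions agree; it then remains to verify that the relative $h_T$ evaluated at $(f,\iota f)$ agrees with the Rota-Baxter $h_T$ evaluated at $f$. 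Expanding the relative $h_T$ and transporting each singled-out argument $v_i$ from the final slot of $\iota f$ back to its $i$-th place costs a sign $(-1)^{n-i}$, and the product $(-1)^{i+1}(-1)^{n-i}=-(-1)^n$ is precisely the coefficient occurring in the Rota-Baxter $h_T$, so the two expressions coincide.

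The only genuine computation, and thus the main obstacle, is the remaining $\Hom(\wedge^n\g\otimes\g,W)$-component, namely $(\delta(f,\iota f))_W=\iota(\dM_\CE f)$. I would expand $\iota(\dM_\CE f)$ via the Chevalley-Eilenberg formula on the $n+1$ arguments $(x_1,\dots,x_n,v)$ and match it termwise against the definition of $(\delta(\cdot))_W$: the bracket terms $\sum_{i<j\le n}$ and the $\rho_W(x_i)$-terms agree at once; the term $(-1)^n\rho_W(v)f(x_1,\dots,x_n)$ matches the $\mu(v)f_\h$-term because $-(-1)^{n-1}=(-1)^n$; and the mixed terms $f([x_i,v],\dots)$ match the terms $f(\dots,\rho(x_i)v)=f(\dots,[x_i,v])$ after transporting $[x_i,v]$ from the first to the last slot, which contributes the sign $(-1)^{n-1}$ reconciling the two. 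All of this is antisymmetry-driven sign bookkeeping; once it is in place $\Phi$ is a chain map and the theorem follows from Theorem~\ref{cohomology-of-LLT}.
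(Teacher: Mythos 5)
Your proposal is correct and follows exactly the route the paper takes: the paper's own (sketched) proof realizes $[W\stackrel{\huaT}{\rightarrow}W;\rho_W,\rho_W,\rho_W]$ as a representation of the relative Rota-Baxter Lie algebra $((\g,[\cdot,\cdot]_\g),(\g,\ad),T)$ and identifies $(\oplus_n\frkC^n(\g,T;\rho_W),\huaD_{\RB})$ as a subcomplex of $(\oplus_n\frkC^n(\g,\ad,T;\rho_W,\rho_W,\rho_W),\huaD_R)$, appealing to Theorem~\ref{cohomology-of-LLT}. You have simply supplied the sign bookkeeping for the embedding $\Phi(f,\theta)=((f,\iota f),\theta)$ that the paper leaves to the reader, and your verifications are correct.
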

\begin{proof}
 We only give a sketch of the proof and leave details to readers. Let $[W; \huaT, \rho_W]$ be a representation of $(\g, [\cdot,\cdot]_\g, T)$. Then $[W\stackrel{\huaT}{\rightarrow} W; \rho_W, \rho_W, \rho_W]$ is a representation of the corresponding relative Rota-Baxter Lie algebra $((\g,[\cdot,\cdot]_\g), (\g,\ad),T)$.  Consider the cochain complex $$(\oplus_{n=0}^{+\infty}\frkC^n(\g,\ad,T; \rho_W, \rho_W, \rho_W ),\huaD_R),$$
  given in Theorem \ref{cohomology-of-LLT}.  It is straightforward to deduce that
  $(\oplus _{n=0}^{+\infty}\frkC^n(\g,T; \rho_W),\huaD_{\RB})$ is its subcomplex. Thus $\huaD_{\RB}\circ \huaD_{\RB}=0.$
\end{proof}
\begin{defi}
  The cohomology of the cochain complex  $(\oplus_{n=0}^{+\infty}\frkC^n(\g,T; \rho_W),\huaD_{\RB})$ is called {\bf the  cohomology of the Rota-Baxter Lie algebra} with coefficients in the representation $[W; \huaT, \rho_W]$. The corresponding $n$-th cohomology group is denoted by $\huaH^n(\g,T; \rho_W)$.
\end{defi}

 We consider the first cohomology group $\huaH^1(\g, T; \rho_{W})$. A linear map $f\in\Hom(\g, W)=\frkC^1(\g, T; \rho_W)$ is a $1$-cocycle if and only if
\begin{equation*}
\dM_{\CE}f=0, \quad f(T(x))=\huaT(f(x)), \quad \forall x\in\g.
\end{equation*}
Since $\frkC^0(\g,T; \rho_{W})=0$, there is no $1$-coboundaries. Thus we have
\begin{equation*}
\huaH^1(\g, T; \rho_{W})=\{f\in\Hom(\g,W)|f\circ T =\huaT\circ f, ~\dM_{\CE}f=0\}.
\end{equation*}

Recall  derivations on a Rota-Baxter Lie algebra from Definition \ref{defi:der}. Similar to Proposition \ref{pro:der}, we have the following characterization of derivations on a Rota-Baxter Lie algebra.
\begin{pro}
A  derivation on a  Rota-Baxter Lie algebra $(\g,[\cdot,\cdot]_{\g},T)$ is a $1$-cocycle on $(\g,[\cdot,\cdot]_{\g},T)$  with coefficients in the adjoint representation.
\end{pro}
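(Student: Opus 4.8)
The plan is to specialize the description of $1$-cocycles of the cochain complex $(\oplus_{n=0}^{+\infty}\frkC^n(\g,T;\rho_W),\huaD_{\RB})$, which was recorded immediately above the statement, to the adjoint representation and then match it term by term with Definition \ref{defi:der}. First I would recall that for a general representation $[W;\huaT,\rho_W]$ we have $\frkC^1(\g,T;\rho_W)=\Hom(\g,W)$, and since $\frkC^0(\g,T;\rho_W)=0$, the coboundary of a $1$-cochain $f$ carries no $\theta$-component and reads $\huaD_{\RB}(f)=(\dM_{\CE}f,h_T(f))$. Evaluating the definition of $h_T$ at $n=1$ gives $h_T(f)(x)=-f(T(x))+\huaT(f(x))$, so $f$ is a $1$-cocycle precisely when $\dM_{\CE}f=0$ and $f\circ T=\huaT\circ f$, exactly as stated.

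Next I would take $W=\g$, $\huaT=T$ and $\rho_W=\ad$, that is, the adjoint representation $[\g;T,\ad]$. Here $\dM_{\CE}$ is the Chevalley--Eilenberg differential of $\g$ with coefficients in the adjoint representation, so a linear map $f\in\Hom(\g,\g)$ satisfies $\dM_{\CE}f=0$ if and only if
\[
f([x,y]_\g)=[f(x),y]_\g+[x,f(y)]_\g,\qquad\forall x,y\in\g,
\]
that is, if and only if $f$ is a derivation of the Lie algebra $\g$. Simultaneously, the second cocycle condition $f\circ T=\huaT\circ f$ specializes to $f\circ T=T\circ f$, which is exactly the compatibility with the Rota-Baxter operator demanded in Definition \ref{defi:der}.

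Putting these together, $f$ is a $1$-cocycle on $(\g,[\cdot,\cdot]_\g,T)$ with coefficients in the adjoint representation if and only if $f$ is a derivation of $\g$ commuting with $T$, which is precisely a derivation on the Rota-Baxter Lie algebra. Since $\frkC^0(\g,T;\rho_W)=0$ there are no $1$-coboundaries, so this is a genuine identification of cochains, not merely of cohomology classes. There is no real obstacle here: the only nontrivial input is the classical identification of Lie-algebra $1$-cocycles valued in the adjoint representation with derivations, and the whole argument is the exact Rota-Baxter analogue of the relative case treated in Proposition \ref{pro:der}.
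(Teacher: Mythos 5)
Your proof is correct and follows exactly the route the paper intends: it specializes the explicit $1$-cocycle conditions $\dM_{\CE}f=0$ and $f\circ T=\huaT\circ f$ (recorded just before the statement) to the adjoint representation $[\g;T,\ad]$, where they become precisely the two defining conditions of a derivation on the Rota-Baxter Lie algebra. The paper leaves this verification implicit by analogy with the relative case, so your write-up simply makes the same argument explicit.
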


Now we introduce the notion of abelian extensions of Rota-Baxter Lie algebras and briefly show that they are classified by the second cohomology group.

\begin{defi}
Let $(\g,[\cdot,\cdot]_\g, T)$ and $(\h,[\cdot,\cdot]_\h, \huaT)$ be two Rota-Baxter Lie algebras. An {\bf extension} of $(\g,[\cdot,\cdot]_\g ,T)$ by $(\h,[\cdot,\cdot]_\h, \huaT)$ is a short exact sequence of Rota-Baxter Lie algebra homomorphisms:
\[\begin{CD}
0@>>>\h@>\frak{i}>>\hat{\g}@>\frak{p}>>\g            @>>>0\\
@.    @V \huaT VV   @V\hat{T}VV  @V T VV    @.\\
0@>>>\h @>\frak{i}>>\hat{\g}@>\frak{p}>>\g             @>>>0
.
\end{CD}\]
An extension is called an {\bf abelian extension} if $\h$ is an abelian Lie algebra.
\end{defi}
\begin{defi}
A {\bf section} of an extension $(\hat{\g},[\cdot,\cdot]_{\hat{\g}}, \hat{T})$ of a Rota-Baxter Lie algebra $(\g,[\cdot,\cdot]_{\g}, T)$ by $(\h,[\cdot,\cdot]_{\h}, \huaT)$ is a linear map $\frak{s}:\g\rightarrow\hat{\g},$ such that
\begin{equation*}
\frak{p}\circ\frak{s}=\Id.
\end{equation*}
\end{defi}

\begin{defi}
Let $(\hat{\g},[\cdot,\cdot]_{\hat{\g}},\hat{T})$ and $(\tilde{\g},[\cdot,\cdot]_{\tilde{\g}},\tilde{T})$ be two abelian extensions of a Rota-Baxter Lie algebra $(\g,[\cdot,\cdot]_\g,T)$ by $(\h ,\huaT)$. They are said to be {\bf isomorphic} if there exists an isomorphism $\kappa$ of Rota-Baxter Lie algebras such that the following diagram commutes:
  \begin{equation*}
\xymatrix@!0{0\ar@{->} [rr]&& \h \ar@{->} [rr] \ar'[d] [dd] \ar@{=} [rd] && \tilde{\g}\ar'[d] [dd]\ar@ {->} [rr] \ar@{->} [rd]^{\kappa}&& \g\ar@{=} [rd]\ar'[d] [dd]\ar@{->} [rr]&&0&\\
&0\ar@{->} [rr]&& \h\ar@{->} [rr]\ar@{->} [dd]&&\hat{\g}\ar@{->} [dd]\ar@{->} [rr]&&\g\ar@ {->} [dd]\ar@{->} [rr]&&0\\
0\ar@{->} [rr]&&\h\ar'[r] [rr] \ar@{=} [rd]&&\tilde{\g}\ar@{->} [rd]^{\kappa}\ar'[r] [rr]&&\g\ar@{=} [rd] \ar'[r] [rr]&&0&\\
&0\ar@{->} [rr]&& \h\ar@{->} [rr]&&\hat{\g}\ar@{->} [rr]&&\g\ar@{->} [rr]&&0.}
\end{equation*}
\end{defi}

For an abelian extension of $(\g, [\cdot,\cdot]_\g, T)$ by $(\h, \huaT)$, choose a section $\frak{s}$ and define a linear map by
\begin{equation*}
 \rho_{\h}(x)(\alpha)=[\frak{s}(x),\alpha]_{\hat{\g}}, \quad \forall x\in\g, \alpha\in\h.
\end{equation*}
  Then $\rho_\h$ is a representation of $(\g, [\cdot,\cdot]_\g, T)$ on $\h$ with respect to $\huaT$. Moreover, similar to Proposition \ref{without section}, this representation is independent on the choice of sections. Define linear maps $\chi: \g\rightarrow\h$ by
  \begin{equation*}
  \chi(x)=\hat{T}(\frak{s}(x))-\frak{s}(T(x)), \quad\forall x\in\g,
  \end{equation*}
  and $\omega:\wedge^2\g\rightarrow\h$ by
   \begin{equation*}
   \omega(x,y)=[\frak{s}(x),\frak{s}(y)]_{\hat{\g}}-\frak{s}[x,y]_\g,\quad \forall x,y\in\g.
   \end{equation*}
   Then similar to Theorem \ref{thm:cohomologicalclass}, we have $\huaD_{\RB}(\omega,\chi)=0$ and $[(\omega,\chi)]\in\huaH^{2}(\g, T; \rho_\h)$ is independent on the choice of sections.

   Similar to Theorem \ref{extension 2}, we have
\begin{thm}
Given a representation $\rho_\h$ of a Rota-Baxter Lie algebra $(\g,[\cdot,\cdot]_\g, T)$ on $(\h ,\huaT)$, abelian extensions of the  Rota-Baxter Lie algebra $(\g,[\cdot,\cdot]_\g, T)$ by $(\h ,\huaT)$ are classified by the second cohomology group $\huaH^2(\g, T; \rho_\h)$.
\end{thm}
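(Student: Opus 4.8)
The plan is to follow the proof of Theorem~\ref{extension 2} line by line, specializing the relative framework to the Rota-Baxter case, where the ambient representation data collapses to the single pair $(\h,\huaT)$ together with $\rho_\h$. Concretely, I would exhibit mutually inverse assignments between the set of isomorphism classes of abelian extensions of $(\g,[\cdot,\cdot]_\g,T)$ by $(\h,\huaT)$ and the group $\huaH^2(\g,T;\rho_\h)$. The map from extensions to cohomology has already been set up in the discussion preceding the statement: given an abelian extension with section $\frak{s}$, the pair $(\omega,\chi)$ with $\omega(x,y)=[\frak{s}(x),\frak{s}(y)]_{\hat{\g}}-\frak{s}[x,y]_\g$ and $\chi(x)=\hat{T}(\frak{s}(x))-\frak{s}(T(x))$ satisfies $\huaD_{\RB}(\omega,\chi)=0$, and its class is independent of $\frak{s}$.

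First I would verify that isomorphic extensions yield the same class, exactly as in Theorem~\ref{extension 2}. If $\kappa$ is an isomorphism of the two extensions and $\frak{s}$ is a section of the first, then $\frak{s}'=\kappa\circ\frak{s}$ is a section of the second, since $\kappa$ covers $\Id_\g$ and restricts to $\Id_\h$. Because $\kappa$ is a Rota-Baxter Lie algebra homomorphism, a short computation using that $\kappa$ preserves the bracket gives $\omega'=\omega$, and using $\kappa\circ\hat{T}=\tilde{T}\circ\kappa$ gives $\chi'=\chi$. Hence the class in $\huaH^2(\g,T;\rho_\h)$ depends only on the isomorphism class of the extension.

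Next, for surjectivity I would realize an arbitrary $2$-cocycle $(\omega,\chi)$ by an extension. On $\g\oplus\h$ define $[x+\alpha,y+\beta]_\omega=[x,y]_\g+\rho_\h(x)\beta-\rho_\h(y)\alpha+\omega(x,y)$ and $\frak{T}(x+\alpha)=T(x)+\huaT(\alpha)+\chi(x)$. The condition $\dM_\CE\omega=0$ makes $[\cdot,\cdot]_\omega$ a Lie bracket, and the condition $\partial\chi+h_T(\omega)=0$ makes $\frak{T}$ a Rota-Baxter operator; together with the evident inclusion and projection this is an abelian extension whose associated cocycle, for the section $x\mapsto x$, is $(\omega,\chi)$. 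For injectivity on classes, suppose $(\omega,\chi)$ and $(\omega',\chi')$ are cohomologous, so their difference is $\huaD_{\RB}(N)=(\dM_\CE N,\,h_T(N))$ for some $N\in\Hom(\g,\h)=\frkC^1(\g,T;\rho_\h)$; then $\kappa(x+\alpha)=x+N(x)+\alpha$ is the desired isomorphism, the relation $\omega-\omega'=\dM_\CE N$ ensuring that $\kappa$ is a Lie algebra isomorphism and $\chi-\chi'=h_T(N)=\huaT\circ N-N\circ T$ ensuring $\frak{T}'\circ\kappa=\kappa\circ\frak{T}$.

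The only step requiring genuine care is checking that $\frak{T}$ is a Rota-Baxter operator in the surjectivity step; this is precisely the unwinding of $\partial\chi+h_T(\omega)=0$ using the explicit $n=2$ formulas for $\partial$ and $h_T$ in the Rota-Baxter (as opposed to relative) setting, and matching the resulting identity with $[\frak{T}(x+\alpha),\frak{T}(y+\beta)]_\omega=\frak{T}(\varrho(\frak{T}(x+\alpha))(y+\beta))-\frak{T}(\varrho(\frak{T}(y+\beta))(x+\alpha))$. This is the direct analogue of the corresponding verification in Theorem~\ref{thm:cohomologicalclass}, and it presents no conceptual difficulty beyond bookkeeping; all remaining steps are formal consequences of the cochain complex structure established in Section~\ref{sec:cohomology}.
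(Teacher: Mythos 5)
Your proposal is correct and follows essentially the same route as the paper: the paper's own proof likewise cites the preceding discussion for the extension-to-cocycle direction, builds $(\g\oplus\h,[\cdot,\cdot]_\omega,\frak{T})$ from a $2$-cocycle using $\dM_{\CE}\omega=0$ and $\partial\chi+h_T(\omega)=0$, and uses $\kappa(x,\alpha)=(x,\alpha+N(x))$ to show cohomologous cocycles give isomorphic extensions. The only cosmetic point is that in your final paragraph the $\varrho$ should be read as the adjoint action of $(\g\oplus\h,[\cdot,\cdot]_\omega)$, since a Rota-Baxter operator is a relative one with respect to the adjoint representation.
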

\begin{proof}
  We have proved that an abelian extension gives to a cohomological class in $\huaH^{2}(\g, T; \rho_\h)$. Similar to the proof of Theorem \ref{extension 2}, isomorphic abelian extensions give rise to the same cohomological class. For the converse part, we choose a $2$-cocycle $(\omega, \chi)$, and define a bracket operation on $\g\oplus\h$ by
   \begin{equation*}
   [x+\alpha, y+\beta]_\omega=[x,y]_\g+\rho_\h(x)\beta-\rho_\h(y)\alpha+\omega(x,y), \quad \forall x,y\in\g, \alpha,\beta\in\h.
   \end{equation*}
   By $\dM_{\CE}\omega=0$, $(\g\oplus\h, [\cdot,\cdot]_\omega)$ is a Lie algebra. Define a linear map $\frak{T}: V\oplus W\rightarrow \g\oplus\h$ by
   \begin{equation*}
   \frak{T}(u,\xi)=T(u)+\huaT(\xi)+\chi(u),\quad \forall u\in V, \xi\in W.
   \end{equation*}
   Since $\partial\chi+h_{T}(\omega)=0, \frak{T}$ is a Rota-Baxter operator on the Lie algebra $(\g\oplus\h,[\cdot,\cdot]_\omega)$. Moreover, let $(\omega', \chi')$ be another $2$-cocycle such that $(\omega,\chi)$ and $(\omega',\chi')$ are in the same cohomology class. Assume that $(\omega, \chi)-(\omega', \chi')=\huaD_\RB (N)$ for $N\in\Hom(\g, \h)$. Then $\kappa:\g\oplus \h\lon\g\oplus \h$ defined by $\kappa(x,\alpha)=(x, \alpha+N(x)),$ for any $x\in\g, \alpha\in\h$, is an isomorphism from $(\g\oplus\h, [\cdot,\cdot]_\omega, \frak{T})$ to $(\g\oplus\h, [\cdot,\cdot]_{\omega'}, \frak{T}')$. This finishes the proof.
\end{proof}

  At the end of this section, we introduce the notion of a Rota-Baxter Lie 2-algebra and briefly show that skeletal Rota-Baxter Lie 2-algebras are classified by $3$-cocycles of Rota-Baxter Lie algebras.
\begin{defi}
A {\bf Rota-Baxter Lie $2$-algebra} consists of a Lie $2$-algebra   $\huaG=(\g_0,\g_{1}, \dM, \frkl_2,\frkl_3)$  and a relative Rota-Baxter operator $\Theta=(T_0,T_1,T_2)$ on $\huaG$ with respect to the  adjoint representation $(\ad_0,\ad_1,\ad_2)$. More precisely, the following equalities hold:
  \begin{itemize}
    \item[\rm(i)] $T_0\big(\frkl_2(T_0(x),y)-\frkl_2(T_0(y),x)\big)-\frkl_2(T_0(x),T_0(y))=\dM T_2(x,y);$

    \item[\rm(ii)] $T_1\big(\frkl_2(T_1(\alpha),x)-\frkl_2(T_0(x),\alpha)\big)-\frkl_2(T_1(\alpha),T_0(x))= T_2(\dM(\alpha),x);$

    \item[\rm(iii)] $
           \frkl_2(T_0(x_1),T_2(x_2,x_3))+T_2\big(x_3,\frkl_2(T_0(x_1),x_2)-\frkl_2(T_0(x_2),x_1)\big)\\
           +T_1\big(\frkl_2(T_2(x_2,x_3),x_1)-\frkl_3(T_0(x_2),T_0(x_3),x_1)\big)+c.p.+\frkl_3(T_0(x_1),T_0(x_2),T_0(x_3))=0.
      $
    \end{itemize}

  We will denote a Rota-Baxter Lie $2$-algebra by $(\huaG, \Theta)$. A Rota-Baxter Lie $2$-algebra is said to be {\bf skeletal } if $\dM=0$.
\end{defi}

For a skeletal Rota-Baxter Lie $2$-algebra, $(\g_0, \frkl_2, T_0)$ is a Rota-Baxter Lie algebra and we have a representation $\rho_{\g_1}$ of $(\g_0, \frkl_2, T_0)$ on the vector space $\g_1$ with respect to $T_1$ given by $\rho_{\g_1}(x)(\alpha)=\frkl_2(x,\alpha)$. Define linear maps $f: \wedge^3\g_0\rightarrow\g_1$ and $\theta: \wedge^2\g_0\rightarrow\g_1$ by
\begin{eqnarray*}
f(x,y,z)&=&l_3(x,y,z),\\
\theta(x,y)&=&-T_2(x,y).
\end{eqnarray*}
Similar to Theorem \ref{theorem H3}, we have $\huaD_{\RB}(f, \theta)=0.$ Thus we obtain the following result.
\begin{thm}
There is a one-to-one correspondence between skeletal Rota-Baxter Lie $2$-algebras and $3$-cocycles of Rota-Baxter Lie algebras.
\end{thm}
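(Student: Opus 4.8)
The plan is to specialize the argument of Theorem \ref{theorem H3} to the adjoint representation $(\ad_0,\ad_1,\ad_2)$, since a Rota-Baxter Lie $2$-algebra is precisely a relative Rota-Baxter Lie $2$-algebra whose representation is the adjoint one. Starting from a skeletal Rota-Baxter Lie $2$-algebra $(\huaG,\Theta)$ with $\dM=0$, the underlying $\huaG$ is a skeletal Lie $2$-algebra, so the Lie $2$-algebra axioms collapse into the identities \eqref{Lie}, \eqref{Lierep} and \eqref{equation2}. From \eqref{Lie} together with condition (i) of the definition of a Rota-Baxter Lie $2$-algebra at $\dM=0$, namely $T_0\big(\frkl_2(T_0(x),y)-\frkl_2(T_0(y),x)\big)=\frkl_2(T_0(x),T_0(y))$, I would conclude that $(\g_0,\frkl_2,T_0)$ is a Rota-Baxter Lie algebra. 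Setting $\rho_{\g_1}(x)\alpha=\frkl_2(x,\alpha)$, the identity \eqref{Lierep} says $\rho_{\g_1}$ is a representation of the Lie algebra $(\g_0,\frkl_2)$ on $\g_1$, while condition (ii) at $\dM=0$ (whose right-hand side $T_2(\dM\alpha,x)$ vanishes) rearranges, using the skew-symmetry of $\frkl_2$, exactly into $\rho_{\g_1}(T_0(x))\circ T_1=T_1\circ\rho_{\g_1}(T_0(x))+T_1\circ\rho_{\g_1}(x)\circ T_1$; hence $[\g_1;T_1,\rho_{\g_1}]$ is a representation of the Rota-Baxter Lie algebra $(\g_0,\frkl_2,T_0)$.

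With these structures in place, as already recorded before the statement, I would take the $3$-cochain $(f,\theta)\in\frkC^3(\g_0,T_0;\rho_{\g_1})$ given by $f=\frkl_3$ and $\theta=-T_2$, and verify $\huaD_{\RB}(f,\theta)=(\dM_{\CE}f,\partial\theta+h_T(f))=0$ componentwise. For the first component, matching the Chevalley-Eilenberg differential of $(\g_0,\frkl_2)$ with coefficients in $\rho_{\g_1}$ against \eqref{equation2}, the $\rho_{\g_1}$-action terms of $\dM_{\CE}\frkl_3$ reproduce the $\frkl_2(\frkl_3(\cdots),\cdot)$ terms of \eqref{equation2} and the bracket terms reproduce the $\frkl_3(\frkl_2(\cdot,\cdot),\cdots)$ terms, so $\dM_{\CE}\frkl_3=0$ is literally \eqref{equation2}. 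For the second component, writing out $h_T(\frkl_3)$ yields the term $-\frkl_3(T_0(\cdot),T_0(\cdot),T_0(\cdot))$ and the three terms $T_1(\frkl_3(T_0(\cdot),T_0(\cdot),\cdot))$, while $\partial(-T_2)$ yields the $\frkl_2(T_0(\cdot),T_2(\cdot,\cdot))$ terms, the $T_1(\rho_{\g_1}(\cdot)T_2(\cdot,\cdot))$ terms and the $T_2(\rho(T_0(\cdot))\cdot,\cdot)$ terms; these should reassemble, via the cyclic-permutation sum $+\,\mathrm{c.p.}$ appearing in condition (iii), into the vanishing of condition (iii) of the Rota-Baxter Lie $2$-algebra.

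For the converse I would reverse each identification: given a $3$-cocycle $(f,\theta)\in\frkC^3(\g_0,T_0;\rho_{\g_1})$ of a Rota-Baxter Lie algebra $(\g_0,\frkl_2,T_0)$ with a representation $[\g_1;T_1,\rho_{\g_1}]$, I set $\frkl_3:=f$, $T_2:=-\theta$, extend $\frkl_2$ to $\g_0\oplus\g_1$ through $\rho_{\g_1}$, and read the equation $\dM_{\CE}f=0$ back as \eqref{equation2} and $\partial\theta+h_T(f)=0$ back as condition (iii), obtaining a skeletal Rota-Baxter Lie $2$-algebra. Since these two assignments do nothing but rename $\frkl_3\leftrightarrow f$, $-T_2\leftrightarrow\theta$ and repackage the defining identities, they are manifestly mutually inverse, giving the asserted bijection.

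The step I expect to cost the most care is the equivalence of the second component $\partial\theta+h_T(f)=0$ with condition (iii): one must track the $(-1)^n$ factor in $h_T$ and the $(-1)^{i+1}$, $(-1)^{i+j}$ factors in $\partial$ and confirm they recombine precisely into the cyclically symmetric expression of (iii). This bookkeeping can be bypassed entirely by appealing to Theorem \ref{cohomology-of-LLT} and Theorem \ref{theorem H3}: the adjoint representation realizes $(\oplus_n\frkC^n(\g_0,T_0;\rho_{\g_1}),\huaD_{\RB})$ as the subcomplex of $(\oplus_n\frkC^n(\g_0,\ad,T_0;\rho_{\g_1},\rho_{\g_1},\rho_{\g_1}),\huaD_R)$ cut out by identifying $f_{\g_1}$ with $f_{V_1}$ under the natural map $\wedge^{n-1}\g_0\otimes\g_0\to\wedge^n\g_0$, and the correspondence of Theorem \ref{theorem H3} then restricts to this subcomplex exactly on those skeletal relative Rota-Baxter Lie $2$-algebras whose representation is the adjoint one.
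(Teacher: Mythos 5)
Your proposal is correct and follows essentially the same route as the paper: the paper likewise extracts the Rota-Baxter Lie algebra $(\g_0,\frkl_2,T_0)$ and the representation $\rho_{\g_1}$ on $\g_1$ with respect to $T_1$, sets $f=\frkl_3$, $\theta=-T_2$, and declares $\huaD_{\RB}(f,\theta)=0$ ``similar to Theorem \ref{theorem H3}'', i.e.\ by specializing the relative case to the adjoint representation exactly as you do. Your closing observation that the sign bookkeeping in $\partial\theta+h_T(f)=0$ can be bypassed via the subcomplex embedding of $\frkC^n(\g_0,T_0;\rho_{\g_1})$ into the relative cochain complex is a sensible way to organize the verification the paper leaves implicit.
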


 \end{document}